\documentclass[11pt]{article}
\usepackage{amssymb}
\usepackage{amsfonts}
\usepackage{mathrsfs}
\usepackage{amssymb,amsfonts,amsmath}
\usepackage{graphics}
\usepackage{graphicx}
\usepackage{subfigure}
\usepackage{latexsym}
\usepackage{esint}
\usepackage{epstopdf}
\usepackage{graphicx,float}
\usepackage{times}
\usepackage{graphicx,multicol,enumerate,subfigure,amsmath}
\usepackage{color}
\usepackage{fullpage}
\usepackage{setspace}
\usepackage[english]{babel}
\usepackage[version=3]{mhchem}
\usepackage{amsmath, amssymb, amsthm}
\usepackage{verbatim, latexsym}
\usepackage{colortbl}
\usepackage{multirow}
\newcommand{\diam}[1]{\mathrm{diam}(#1)}
\newcommand{\bs}{\boldsymbol}
\newcommand{\ext}{C_{\rm{ext}}(D,s)}

\newcommand{\prnt}[1]{\left( #1 \right)}

\newcommand{\norm}[1]{\left\|#1\right\|}
\newcommand{\normHsemi}[2]{\left|#1\right|_{H^{1}\prnt{#2}}}
\newcommand{\normHNg}[2]{\norm{#1}_{H^{-1}\prnt{#2}}}

\newcommand{\normL}[2]{\norm{#1}_{L^2\prnt{#2}}}
\newcommand{\normHp}[3]{\norm{#1}_{H^{#2}\prnt{#3}}}

\newcommand{\normI}[2]{\norm{#1}_{C\prnt{#2}}}
\newcommand{\normLH}[3]{\norm{#1}_{L^2(\Omega,H^{#2}\prnt{#3})}}

\newcommand{\mc}[1]{\mathcal{#1}}
\newcommand{\CI}{C_{I_h}}
\newtheorem{theorem}{Theorem}[section]

\newtheorem{assumption}{Assumption}[section]
\newtheorem{remark}{Remark}[section]
\newtheorem{lemma}{Lemma}[section]

\newtheorem{proposition}{Proposition}[section]
\numberwithin{equation}{section}
\usepackage{soul}
\setstcolor{red}
\newcommand{\noteLi}[1]{{\color{red}{#1}}}

\newcommand{\dy}{\boldsymbol{\rho}\mathrm{d}{y}}

\title{On the Numerical Approximation of the Karhunen-Lo\`{e}ve Expansion for Lognormal Random Fields}
\author{Michael Griebel\thanks{Institut f\"ur Numerische Simulation,
            Universit\"at Bonn,
            Endenicher Allee 19B, 53115 Bonn, Germany.
            E-mail: griebel@ins.uni-bonn.de
             }\;\,\thanks{Fraunhofer SCAI, Schloss Birlinghoven, 53754 Sankt Augustin, Germany} \; and Guanglian Li\thanks{Bernoulli Institute, University of Groningen, Nijenborgh 9, 9747AG, Groningen, The Netherlands. E-mail: guanglian.li@rug.nl, lotusli0707@gmail.com.
             }
}
\date{}
\begin{document}
\maketitle
\begin{abstract}
The Karhunen-Lo\`{e}ve (KL) expansion is a popular method for approximating random fields by transforming an
infinite-dimensional stochastic domain into a finite-dimensional parameter space. Its
numerical approximation is of central importance to the study of PDEs with random coefficients. In this work, we analyze
the approximation error of the Karhunen-Lo\`eve expansion for lognormal random fields. We derive error
estimates that allow the
optimal balancing of the truncation error of the expansion, the Quasi Monte-Carlo error for sampling in the stochastic domain and the numerical
approximation error in the physical domain. The estimate is given in the number $M$ of terms maintained in the KL expansion, in the number of sampling points $N$, and in the discretization mesh size
$h$ in the physical domain employed in the numerical solution of the eigenvalue problems during the expansion.
The result is used to quantify the error in PDEs
with random coefficients. We complete the theoretical analysis with numerical experiments in one
and multiple stochastic dimensions.\\
{\bf Keywords}: Karhunen-Lo\`eve expansion, eigenvalue decay, approximation of bivariate functions, error estimates, lognormal random field.
\end{abstract}

\section{Introduction}\label{sec:intro}
Partial differential equations (PDEs) with random coefficient have been widely employed to describe applications
that are affected by a certain amount of uncertainty arising from imperfect/insufficient information about the problem,
e.g., in the input data. The range of applications is broad and diverse and includes, e.g., oil field modelling, quantum mechanics
and finance \cite{B-Brown.Kuo.Sloan:2017, Griebel.Hamaekers.Chinnamsetty:2016, sun2017new}. The dimension of the random
coefficient can be huge or even infinite, which poses enormous computational challenge. To reduce its dimensionality, one can parameterize
the random coefficient by means of the Karhunen-Lo\`{e}ve (KL) expansion or the polynomial chaos (PC) expansion
\cite{ghanem2003stochastic, Schwab:2006:KAR:1167051.1167057}, which greatly facilitates the subsequent numerical treatment,
e.g., by the stochastic Galerkin method or the stochastic collocation method. Alternatively, one may expand the random field with respect
to the hierarchical Faber basis or some wavelet type basis; see \cite{Bachmayr.Cohen.Dinh.Schwab:2017, cohen2010convergence}
for details. In this paper, we will focus on the KL expansion, which is known to be optimal in the sense of the mean square error.

To formulate the problem, let $D\subset\mathbb{R}^d$ be an open bounded domain with a strong local Lipschitz boundary and let $(\widetilde{\Omega},\Sigma, \mathcal{P})$ be
a complete separable probability space with $\sigma$-field $\Sigma\subset 2^{\widetilde{\Omega}}$ and probability measure $\mathcal{P}$. We
will denote $\Omega:= (\widetilde{\Omega},\Sigma, \mathcal{P})$ for notational simplicity. Now, we consider a stochastic {field} $\kappa(y,x)
\in L^{\infty}(\Omega, L^2(D))$ with its logarithm being a centered Gaussian field. The lognormal
random field is frequently used in stochastic PDEs as a random diffusion coefficient.

In practical computation, its numerical approximation usually proceeds in three steps. In the first step, the centered random field $\log \kappa(y,x)$ is approximated by its $M$-term KL expansion for some $M\in \mathbb{N}_{+}$. The truncation error relies on the regularity of the bivariate function $\log \kappa(y,x)$ in the physical variable $x$, see \cite{griebel2017decay} for details. In the second step, the covariance
function $R(x,x')$ of the centered Gaussian random field $\log \kappa(y,x)$ is approximated via a sampling method. By its very definition, the covariance function involves an integral
over the stochastic domain $\Omega$, which is often of very high dimensional. For its approximation, various quadrature-type sampling methods, e.g., Monte-Carlo methods, (Quasi) Monte-Carlo (QMC) methods and sparse grids
\cite{Bungartz.Griebel:2004, MR3038697,DuTeUl2015} can be applied, say with $N$ sampling points. These methods essentially require
boundedness of the variation, the first or higher mixed derivatives of $\log\kappa(\cdot,x)$ for fixed $x\in D$ and then yield a corresponding order of convergence. In this paper, we focus on the QMC method, which has only a low regularity
requirement on $\Omega$, namely that the first mixed derivative of $\log\kappa(\cdot,x)$ is bounded. 
The outcome of this second step is a function $R_N(x,x')\in L^2(D\times D)$ that approximates the covariance function $R(x,x')$. The associated
self-adjoint operators are denoted as $\mc{R}_N$ and $\mc{R}$, respectively. Note that $\mc{R}_N$ is a finite rank operator with rank
not larger than $N$. We shall prove in Proposition \ref{prop:recall} that only the first $\lfloor  N^{\frac{1}{2s/d+1}}\rfloor$ terms
in the KL expansion of $\mc{R}_N$ are relevant to approximate the spectrum of $R(x,x')$. Here, the nonnegative parameter $s$ denotes the regularity
of the bivariate function $\log \kappa(y,x)$ in the physical variable $x$. This result implies that the number of KL truncation terms satisfies $M\leq \lfloor  N^{\frac{1}{2s/d+1}}\rfloor$.
The third step is to approximate the eigenvalue problem of the self-adjoint operator $\mc{R}_N$
by means of a conforming Galerkin finite element method (FEM) over a regular mesh with a mesh size $h$.
Now, to estimate the error between $\kappa(y,x)$ and its numerical approximation
$\kappa_M^{N,h}(y,x)$ with $M$ being the number of truncation
terms, $N$ being the number of sampling points and $h$ being the mesh size, the eigenvalue approximation error is derived.
Moreover, to balance
the decay of the eigenvalues of the covariance kernel $R(x,x')$ and the numerical approximation
error, we need to take $h\ll N^{-1/s}$ in order to ensure convergence in the first place. Otherwise, no convergence
rate is guaranteed when solving the eigenvalue problems numerically.

The main contribution of this work is threefold. First, we present the spectral analysis of the finite rank operator $\mathcal{R}_N$, which allows us to specify the number of truncation terms. Second, we recall the error rate of QMC quadrature and provide an error estimate of the numerical approximation to the eigenvalue problem associated with the operator $\mathcal{R}_N$ in terms of mesh size $h$. 
Third, we derive an error estimate of both, $\log\kappa-\log\kappa_{M}^{N,h}$ and $\kappa-\kappa_{M}^{N,h}$ in various norms.
Our final estimates of $\|\kappa-\kappa_M^{N,h}\|_{L^p(\Omega,L^2( D))}$ and $\|\kappa-\kappa_M^{N,h}
\|_{L^p(\Omega, C(D))}$ with $1\leq p<2$ are presented in Theorem \ref{thm:FinalExp}. For example, we obtain the bound
\[
\norm{{\kappa - \kappa_M^{N,h} }}_ {L^p(\Omega, L^{2}(D))} \lesssim M^{-\frac{s}{d}}+M^{\frac{s}{d}+\frac{3}{2}}h^{s}+\Big(\frac{M}{N}\Big)^{1/2}.
\]


Moreover, we discuss the example of an elliptic PDE with lognormal random diffusion coefficient.
There, using our previous results on the approximation of the lognormal random field, we can deduce bounds of the error between the solution $u$ of the PDE and its induced approximation $u_M^{N,h}$.

The remainder of the paper is organized as follows. We formulate in Section \ref{sec:prelim} the approximation
of $\log\kappa$ by the KL expansion, explain the general sampling method and discuss the Galerkin approximation. In Section \ref{sec:qmc}, we analyze the Quasi Monte-Carlo method to approximate the covariance
kernel $R(x,x')$, and derive a spectral estimate for $\mathcal{R}$ and $\mathcal{R}_N$ by means of
the maximin principle and an eigenvalue decay estimate. In Section \ref{sec:galerkin}, we discuss the
conforming Galerkin approximation of the eigenvalue problems of $\mathcal{R}_N$ and derive spectral
estimates. The main error estimates between $\kappa$ and $\kappa_M^{N,h}$ in the $L^p(\Omega,L^2(D))$-norm
and the $L^p(\Omega,C(D))$-norm with $p\in [1,2)$, respectively, are established in Section \ref{sec:main}.
Furthermore, we present an application of our results for an elliptic operator with
lognormal random coefficients in Section \ref{sec:kl_spde}. Two numerical tests are provided in
Section \ref{sec:num} to verify our findings. Finally, we give some concluding remarks in Section \ref{sec:conclusion}.

\section{Preliminaries}\label{sec:prelim}
This section collects elementary facts on the KL expansion and its numerical approximation. To this end, the overall numerical
approximation error is divided into three parts: the truncation error, the sampling error and the resulting approximation error of the eigenvalue problems.

We start with some notation. Let two Banach spaces $V_1$ and $V_2$ be given. Then, $\mathcal{B}(V_1,V_2)$ stands for the Banach space composed of all continuous linear operators from $V_1$ to $V_2$ and $\mc{B}(V_1)$ stands for $\mc{B}(V_1, V_1)$. The set of nonnegative integers is denoted by  $\mathbb{N}$. For any index $\alpha\in \mathbb{N}^d$, $|\alpha|$ is the
sum of all components. The letters $M$, $N$ and $h$ are reserved for the truncation number of the KL modes, the number of sampling points and the mesh size. We write $A\lesssim B$ if $A\leq cB$ for some absolute constant $c$ which is independent of $M$, $N$ and $h$, and we likewise write $A\gtrsim B$. Moreover, for any $m\in \mathbb{N}$, $1\leq p\leq \infty$, we follow \cite{Adam78} and define the Sobolev space $W^{m,p}(D)$ by	
$$W^{m,p}(D)=\{u\in L^{p}(D): D^{\alpha}u\in L^{p}(D) \text{ for } 0\leq|\alpha|\leq m\}.$$
It is equipped with the norm
\begin{equation*}
 \|u\|_{W^{m,p}(D)} = \left\{\begin{aligned}
 \Big(\sum\limits_{0\leq |\alpha|\leq m}\norm{D^{\alpha}u}_{L^p(D)}^{p}\Big)^{\frac{1}{p}}, & \text{ if }1\leq p<\infty,\\
 \max\limits_{0\leq |\alpha|\leq m}\norm{D^{\alpha}u}_{L^\infty(D)} ,& \text{ if } p=\infty.
\end{aligned}\right.
\end{equation*}
The space $W_{0}^{m,p}(D)$ is the closure of $C^{\infty}_{0}(D)$ in $W^{m,p}(D)$. Its dual space is $W^{-m,q}(D)$, with ${1}/{p}+{1}/{q}=1$. Also we use $H^{m}(D)=W^{m,p}(D)$ for $p=2$. $(\cdot,\cdot)$ denotes the inner product in $L^2(D)$.

\subsection{Karhunen-Lo\`{e}ve expansion: continuous level}
In this work, we consider a stochastic field $\kappa(y,x)\in L^2(\Omega\times D)$ with its logarithm being a centered Gaussian field, i.e.,
\[
\log\mathrm{d}\mc{P}({y})=\boldsymbol{\rho}\mathrm{d}{y}:=\prod_{j=1}^{d'}\rho(y_j)\mathrm{d}(y_j)\quad \text{ with } \rho(y):=\frac{1}{\sqrt{2\pi}}\exp{(-\frac{y^2}{2})},
\]
where $\mc{P}$ is the probability measure on $\Omega$ introduced in Section \ref{sec:intro}. We denote the associated integral operator $\mathcal{S}: L^{2}(D)\rightarrow  L^{2}(\Omega)$ by
\begin{align}\label{eq:S}
(\mathcal{S}v)({y})=\int_{D}\log\kappa(y,x)v(x)\mathrm{d}x,
\end{align}
whereas its adjoint operator $\mathcal{S}^{*}: L^{2}(\Omega)\rightarrow  L^{2}(D)$ is defined by
\begin{align}\label{eq:ajoint_S}
(\mathcal{S}^{*}v)(x)=\int_{\Omega}\log\kappa(y,x)v({y})\dy.
\end{align}
Let $\mathcal{R}: L^2(D)\rightarrow L^2(D)$ be defined by  $\mathcal{R}:=\mathcal{S}^{*}\mathcal{S}$.
Then $\mathcal{R}$ is a nonnegative self-adjoint Hilbert-Schmidt operator with kernel
$R\in L^2(D\times D):D\times D\to \mathbb{R}$ given by
\[
R(x,x')=\int_{\Omega}\log\kappa(y,x)\log\kappa(y,x')\dy. 
\]
This is just the covariance function of the stochastic process $\log\kappa(x,{y})$. Moreover, for any $v\in L^2(D)$, we have
\begin{equation*}
\mathcal{R}v(x)=\int_{D}R(x,x')v(x')\mathrm{d}x' = \int_D\int_\Omega \log\kappa(y,x)\log\kappa(y,x^\prime)v(x^\prime)\dy\mathrm{d}x^\prime.
\end{equation*}

The standard spectral theory for compact operators \cite{yosida78} implies that the operator $\mathcal{R}$ has
at most countably many discrete eigenvalues, with zero being the only accumulation point, and each non-zero
eigenvalue has only finite multiplicity. Let $\{\lambda_n\}_{n=1}^{\infty}$ be the sequence of eigenvalues (with multiplicity counted) associated to
$\mathcal{R}$, which are ordered nonincreasingly, and let $\{\phi_n\}_{n=1}^\infty$ be the corresponding eigenfunctions that are orthonormal in $L^2(D)$.
 Furthermore, for any $\lambda_n\neq 0$, define
\begin{equation}\label{eq:psi}
  \psi_n(y)=\frac{1}{\sqrt{\lambda_n}}\int_{D}\log\kappa(y,x)\phi_n(x)\mathrm{d}x.
\end{equation}
One can verify that the sequence $\{\psi_n\}_{n=1}^\infty$ is uncorrelated and
orthonormal in $L^2(\Omega)$, and therefore, $\{\psi_n\}_{n=1}^{\infty}$ are i.i.d normal random functions.

Note that the sequence $\{\lambda_n\}_{n=1}^{\infty}$ can be characterized by the so-called approximation numbers (cf. \cite[Section 2.3.1]{Pietsch:1986:ES:21700}). They are defined by
\begin{align}\label{eq:a_n}
\lambda_{n}=\inf\{\norm{\mathcal{R}-L}_{\mathcal{B}(L^2(D))}:L\in \mathfrak{F}(L^2(D)), {\text{rank}}(L)< n\}
\end{align}
where $\mathfrak{F}(L^2(D))$ denotes the set of the finite rank operators on $L^2(D)$.
This equivalency is frequently employed to estimate eigenvalues by constructing finite rank approximation operators to $\mathcal{R}$.

The KL expansion of the bivariate function $\log\kappa(y,x)$ then refers to the expression
\begin{equation}\label{eq:KL}
\log\kappa(y,x)=\sum\limits_{n=1}^{\infty}\sqrt{\lambda_n}\phi_n(x)\psi_n(y),
\end{equation}
where the series converges in $L^2( \Omega\times D)$.
\subsection{Karhunen-Lo\`{e}ve expansion: $M$-term truncation}
Now, we will truncate the KL expansion and discuss the resulting error. The studies on the $M$-term KL approximation to random fields are extensive. In \cite{Schwab:2006:KAR:1167051.1167057}, the authors derived the eigenvalue decay rates for random fields with their corresponding covariance kernels possessing certain regularity and considered the generalized fast multipole methods to solve the associated eigenvalue problems. Robust eigenvalue computation for smooth covariance kernels was studied in \cite{Todor2006}. A comparison of $M$-term KL truncation and the sparse grids approximation was given in \cite{Griebel.Harbrecht:2017}.

The result of this section is based on our recent paper \cite{griebel2017decay}, which proves a sharp eigenvalue decay rate under a mild assumption on the regularity of the bivariate function $\log\kappa(y,x)$ in the physical domain. To this end, we make the following assumption.
\begin{assumption}[Regularity of $\log\kappa(y,x)$]\label{A:11}
There exists some $s\geq0$ such that
$\log\kappa(y,x)\in L^{\infty}(\Omega, H^{s}(D))$.
\end{assumption}
\noindent Under Assumption \ref{A:11}, by the definition of the kernel $R(x,x^\prime)$, we have $R(x,x')\in H^{s}(D)\times H^{s}(D)$.

The following
eigenvalue decay estimate \cite[Theorems 3.2, 3.3 and 3.4]{griebel2017decay} will be used repeatedly. 
\begin{theorem}\label{thm:truncationError}
Let Assumption \ref{A:11} hold. Then, for any $M\in \mathbb{N}$ sufficiently large, there holds
\begin{align*}
{{\lambda_n}}&\approx C_{\ref{thm:truncationError}}n^{-\frac{2s}{d}-1} \text{ when $n$ is sufficiently large},\\
\Big\|{\sum\limits_{n>M}\sqrt{\lambda_n}\phi_n(x)\psi_n(y)}\Big\|_{L^2(\Omega\times D)}&\leq C_{\ref{thm:truncationError}}^{1/2}\sqrt{\frac{d}{2s}}(M+1)^{-\frac{s}{d}}.
\end{align*}
with the constant
$C_{\ref{thm:truncationError}}:=\diam{D}^{2s}C_{\rm em}(d,s)\ext\normLH{\log\kappa}{s}{D} ^2$. Here, $C_{\rm{em}}(d,s)$ denotes the embedding constant between the Lorentz sequence spaces $\ell_{\frac{d}{d+2s}, 1} \hookrightarrow \ell_{\frac{d}{d+2s}, \infty}$ and $\ext$ is a constant depending only on $D$ and $s$.
\end{theorem}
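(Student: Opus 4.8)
The plan is to reduce the claim to a statement about the eigenvalue decay of a Hilbert-Schmidt operator whose kernel inherits the $H^s$-regularity of $\log\kappa$, and then to invoke the machinery of approximation numbers together with the embedding of Lorentz sequence spaces. Concretely, under Assumption \ref{A:11} we have $\log\kappa(y,\cdot)\in H^s(D)$ uniformly in $y$, so the covariance kernel $R(x,x')=\int_\Omega \log\kappa(y,x)\log\kappa(y,x')\dy$ lies in $H^s(D)\times H^s(D)$. The operator $\mathcal{R}=\mathcal{S}^*\mathcal{S}$ is therefore a nonnegative self-adjoint Hilbert-Schmidt operator whose smoothing order is governed by $s$. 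Since the second displayed bound follows from the first by a direct summation — one has $\sum_{n>M}\lambda_n \lesssim C_{\ref{thm:truncationError}}\sum_{n>M} n^{-2s/d-1}\lesssim C_{\ref{thm:truncationError}}\frac{d}{2s}(M+1)^{-2s/d}$, and taking square roots (using Parseval, since $\|\sum_{n>M}\sqrt{\lambda_n}\phi_n\psi_n\|_{L^2(\Omega\times D)}^2=\sum_{n>M}\lambda_n$ by orthonormality of $\{\phi_n\}$ and $\{\psi_n\}$) yields exactly the stated factor $C_{\ref{thm:truncationError}}^{1/2}\sqrt{d/2s}(M+1)^{-s/d}$ — the entire burden rests on establishing the eigenvalue rate $\lambda_n\approx C_{\ref{thm:truncationError}} n^{-2s/d-1}$.

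For the eigenvalue rate I would proceed through the variational characterization \eqref{eq:a_n}: $\lambda_n$ equals the $n$-th approximation number of $\mathcal{R}$. The strategy is to build an explicit finite-rank approximant $L$ of rank $<n$ and estimate $\|\mathcal{R}-L\|_{\mathcal{B}(L^2(D))}$. The natural choice is to truncate a spectral or multiscale expansion of the kernel: project $R$ onto a finite-dimensional space spanned by the first $\sim n$ tensor-product basis functions adapted to $H^s(D)\times H^s(D)$ (for instance via a hierarchical/wavelet or finite element approximation), and control the tail using the $H^s$-regularity. The Sobolev smoothness of order $s$ on a $d$-dimensional domain translates, via standard approximation-theoretic estimates for $H^s(D)$ (where the dimensional exponent $s/d$ enters), into a decay of the approximation error of order $n^{-(2s/d+1)}$ for the operator norm; the factor $2s$ instead of $s$ arises because the kernel carries regularity of order $s$ in \emph{each} of its two variables. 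Matching the constant to $C_{\ref{thm:truncationError}}=\diam{D}^{2s}C_{\rm em}(d,s)\ext\normLH{\log\kappa}{s}{D}^2$ requires tracking the diameter scaling $\diam{D}^{2s}$ (from rescaling the domain to a reference cube), the $L^2(\Omega,H^s(D))$-norm of $\log\kappa$ squared (from $\mathcal{R}=\mathcal{S}^*\mathcal{S}$ producing a square), and the constant $\ext$ from the Sobolev extension operator into the whole space.

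The main obstacle I anticipate is proving the \emph{sharpness} of the decay — that is, the two-sided estimate $\lambda_n\approx C_{\ref{thm:truncationError}} n^{-2s/d-1}$ rather than a mere upper bound. The upper bound follows from the constructive approximation argument sketched above, but the matching lower bound is more delicate: it requires exhibiting a subspace on which $\mathcal{R}$ acts with controlled lower spectral mass, or equivalently showing that the $H^s$-regularity is saturated and cannot be improved generically. This is precisely where the Lorentz sequence space embedding $\ell_{\frac{d}{d+2s},1}\hookrightarrow\ell_{\frac{d}{d+2s},\infty}$ enters: the quasi-norm in $\ell_{p,\infty}$ with $p=\frac{d}{d+2s}$ encodes exactly the rate $\lambda_n = O(n^{-1/p})=O(n^{-(2s/d+1)})$, while the embedding constant $C_{\rm em}(d,s)$ quantifies the passage from a summability (a priori $\ell_{p,1}$-type) bound on the eigenvalues to the pointwise decay. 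Since the full proof of all three ingredients is carried out in our earlier work \cite{griebel2017decay}, here I would simply invoke \cite[Theorems 3.2, 3.3 and 3.4]{griebel2017decay} for the eigenvalue asymptotics and the explicit constant, and then complete the statement by the elementary summation and Parseval argument described above to obtain the truncation error bound.
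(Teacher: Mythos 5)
Your proposal is correct and matches the paper's treatment: the paper states this theorem as a recalled result and simply cites \cite[Theorems 3.2, 3.3 and 3.4]{griebel2017decay} for both the eigenvalue asymptotics and the truncation bound, which is exactly what you ultimately do, and your supplementary Parseval-plus-summation derivation of the second display from the first is sound. The extra sketch of the approximation-number/Lorentz-embedding machinery is a reasonable account of the cited proof, though note the embedding $\ell_{\frac{d}{d+2s},1}\hookrightarrow\ell_{\frac{d}{d+2s},\infty}$ furnishes the pointwise \emph{upper} decay from summability rather than the lower bound as your phrasing suggests.
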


The next lemma gives the regularity of the eigenfunctions $\{\phi_n\}_{n=1}^{\infty}$.
\begin{lemma}[Regularity of the eigenfunctions $\{\phi_n\}_{n=1}^{\infty}$]
Let Assumption \ref{A:11} be valid. Then for all $0\leq \theta\leq 1$, there holds
\begin{align}\label{eq:phi_theta}
\normHp{\phi_n}{\theta s}{D}\leq C(D,d, s)n^{\frac{\theta s}{d}} \text{ when $n$ is sufficiently large.}
\end{align}
Here, $C(D,d, s)$ denotes a positive constant depending only on $D$, $d$ and $s$.
\end{lemma}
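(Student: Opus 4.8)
The plan is to prove the two endpoints $\theta=0$ and $\theta=1$ and then fill in $0<\theta<1$ by interpolation. For $\theta=0$ the claim is immediate from the $L^2(D)$-orthonormality, $\normL{\phi_n}{D}=1=n^0$. For the intermediate exponents, since $[L^2(D),H^s(D)]_\theta=H^{\theta s}(D)$, the interpolation inequality $\normHp{\phi_n}{\theta s}{D}\le\normL{\phi_n}{D}^{1-\theta}\normHp{\phi_n}{s}{D}^{\theta}$ reduces everything to the single endpoint bound $\normHp{\phi_n}{s}{D}\lesssim n^{s/d}$, so the whole lemma rests on the case $\theta=1$.

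For $\theta=1$ I would first record two structural facts. From Assumption \ref{A:11} and Minkowski's integral inequality applied to \eqref{eq:ajoint_S}, the adjoint $\mathcal{S}^{*}$ maps $L^2(\Omega)$ boundedly into $H^{s}(D)$, with norm controlled by $\normLH{\log\kappa}{s}{D}$; equivalently $\mathcal{R}=\mathcal{S}^{*}\mathcal{S}$ is bounded (in fact Hilbert--Schmidt) from $H^{-s}(D)$ into $H^{s}(D)$, because the mixed regularity $R\in H^{s}(D)\times H^{s}(D)$ gives $\sum_{j,k}(1+\mu_j)^{s}(1+\mu_k)^{s}|R_{jk}|^{2}<\infty$ in the eigenbasis $\{e_k\}$ of the Laplacian on $D$, where $R_{jk}=(\mathcal Re_k,e_j)$, $\mu_k\approx k^{2/d}$ (Weyl asymptotics) and $\normHp{u}{t}{D}^2\approx\sum_k(1+\mu_k)^{t}|(u,e_k)|^2$. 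The singular value decomposition leading to \eqref{eq:psi}--\eqref{eq:KL} gives $\phi_n=\lambda_n^{-1/2}\mathcal S^{*}\psi_n$, so the boundedness of $\mathcal S^{*}$ into $H^s$ already yields the crude bound $\normHp{\phi_n}{s}{D}\le C\lambda_n^{-1/2}$, which by Theorem \ref{thm:truncationError} is only $O(n^{s/d+1/2})$.

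To sharpen this to the claimed $n^{s/d}$ I would expand $\phi_n=\sum_k c_{n,k}e_k$ and split $\normHp{\phi_n}{s}{D}^2\approx\sum_k(1+\mu_k)^s c_{n,k}^2$ at the frequency threshold $\mu_k\sim\mu_n\approx n^{2/d}$. The low-frequency part is automatically of the right size, $\sum_{\mu_k\lesssim\mu_n}(1+\mu_k)^s c_{n,k}^2\le(1+C\mu_n)^{s}\normL{\phi_n}{D}^2\lesssim n^{2s/d}$, so the entire difficulty is to show that the high-frequency tail contributes no more. For this I would feed the eigenrelation $\lambda_n c_{n,k}=(\phi_n,\mathcal R e_k)$, the smoothing of $\mathcal R$ from the previous step, and the two-sided decay $\lambda_n\approx C_{\ref{thm:truncationError}}n^{-2s/d-1}$ of Theorem \ref{thm:truncationError} into a min--max/compression estimate: on the high-frequency subspace $V_N^{\perp}=\overline{\mathrm{span}}\{e_k:k>N\}$ one has $\|Q_N\mathcal R Q_N\|_{\mathcal B(L^2(D))}\lesssim\mu_N^{-s}\lesssim N^{-2s/d}$, so for $N\gg n$ the operator $\mathcal R$ cannot sustain an eigenvalue as large as $\lambda_n$ on frequencies above $N$; combined with the mutual orthonormality of $\{\phi_n\}$ this should force $\phi_n$ to be spectrally concentrated in a frequency annulus around $\mu_k\sim\mu_n$ and make the weighted tail geometrically summable.

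The main obstacle is exactly this last step. The naive estimates---either $\normHp{\phi_n}{s}{D}\le C\lambda_n^{-1/2}\sim n^{s/d+1/2}$ from the factorization, or $\normHp{\phi_n}{s}{D}\le C\lambda_n^{-1}$ from $\mathcal R\colon L^2(D)\to H^s(D)$---overshoot the target by a genuine power of $n$, and no single operator-norm bound recovers the sharp exponent, since it is lost precisely in the Cauchy--Schwarz/operator-norm step (this is already visible on a diagonal model where $\mathcal R$ and the Laplacian share eigenfunctions, where the true behaviour is $\normHp{\phi_n}{s}{D}\approx\lambda_n^{-s/(2s+d)}=n^{s/d}$). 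Recovering $n^{s/d}$ requires using the full mixed regularity $R\in H^s(D)\times H^s(D)$ and the two-sided eigenvalue asymptotics of Theorem \ref{thm:truncationError} simultaneously---a Weyl-type concentration of $\phi_n$ near frequency $n^{1/d}$---rather than either ingredient in isolation; this is the delicate point inherited from the decay analysis of \cite{griebel2017decay}.
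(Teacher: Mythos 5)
Your reduction of the lemma to the single endpoint $\theta=1$ (via $\normL{\phi_n}{D}=1$ and interpolation between $L^2(D)$ and $H^s(D)$) is correct and is also how the paper handles the $\theta$-dependence. The problem is that you never actually prove the endpoint case. Your rigorous content stops at the crude bound $\normHp{\phi_n}{s}{D}\lesssim\lambda_n^{-1/2}\sim n^{s/d+1/2}$, and the improvement to $n^{s/d}$ --- which is the entire substance of the lemma --- is only sketched as a programme (a compression estimate $\|Q_N\mathcal{R}Q_N\|_{\mathcal{B}(L^2(D))}\lesssim N^{-2s/d}$ forcing spectral concentration of $\phi_n$ near Laplacian frequency $n^{1/d}$), which you then explicitly concede you cannot complete. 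Moreover, that programme is not obviously sound as stated: a norm bound on the compressed operator controls the eigenvalues of $Q_N\mathcal{R}Q_N$, but transferring this to frequency localization of the exact eigenfunctions $\phi_n$ requires perturbation-theoretic input (a commutation or gap structure between $\mathcal{R}$ and the Laplacian) that is not available in general; your own diagonal-model remark is precisely the case where such structure is assumed. So this is a genuine gap, not a stylistic difference.

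The ingredient you are missing is an exact Parseval-type identity that sidesteps all operator-norm estimates. Differentiating the KL expansion \eqref{eq:KL} termwise in $x$ (justified under Assumption \ref{A:11} by dominated convergence) and using the $L^2(\Omega)$-orthonormality of $\{\psi_n\}$ gives, for each multi-index $\bs{\alpha}$ with $|\bs{\alpha}|\le s$, the identity $\|\partial_x^{\bs{\alpha}}\log\kappa\|_{L^2(\Omega\times D)}^2=\sum_{n}\lambda_n\|\partial_x^{\bs{\alpha}}\phi_n\|_{L^2(D)}^2$, hence $\normLH{\log\kappa}{s}{D}^2=\sum_{n}\lambda_n\normHp{\phi_n}{s}{D}^2<\infty$. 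Combined with the two-sided asymptotics $\lambda_n\approx C_{\ref{thm:truncationError}}\,n^{-2s/d-1}$ of Theorem \ref{thm:truncationError}, this converts the problem into a weighted $\ell^2$ summability statement in which no Cauchy--Schwarz or operator-norm step is taken --- exactly the step where your approach loses the half power. This is the paper's route. (To be fair to your instinct that something delicate happens here: even the paper's final deduction, from convergence of $\sum_n n^{-2s/d-1}\normHp{\phi_n}{s}{D}^2$ to the termwise bound $\normHp{\phi_n}{s}{D}\lesssim n^{s/d}$, strictly speaking only forces the general term of a convergent positive series to vanish, i.e.\ $\normHp{\phi_n}{s}{D}=o(n^{s/d+1/2})$, so an additional argument is needed to rule out sparse spikes. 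But the Parseval identity is the key structural fact that your proposal lacks, and without it the endpoint case remains unproven.)
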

\begin{proof}
We will only prove the result for $s\in \mathbb{N}_{+}$. The case for $s\in \mathbb{R}^{+}$ can be obtained by the interpolation method.

Let $\bs{\alpha}=[\alpha_1,\cdots,\alpha_d]\in \mathbb{N}^d$ with $|\bs{\alpha}|:=\sum\limits_{i=1}^{d}\alpha_i\leq s$. The combination of Assumption \ref{A:11} and decomposition \eqref{eq:KL} and an application of Lebesgue's dominated convergence theorem lead to the expansion
\[
\partial_{x}^{\bs{\alpha}}\log\kappa(y,x)=\sum\limits_{n=1}^{\infty}\sqrt{\lambda_n}
\partial_{x}^{\bs{\alpha}}\phi_n(x)\psi_n(y).
\]
After taking the squared $L^2(\Omega\times D)$-norm on both sides, we arrive at
\[
\|\partial_{x}^{\bs{\alpha}}\log\kappa\|_{L^2(\Omega\times D)}^2=
\sum\limits_{n=1}^{\infty}\lambda_n\|\partial_{x}^{\bs{\alpha}}\phi_n\|_{L^2(D)}^2.
\]
Now we sum over all $\bs{\alpha}\in\mathbb{N}^d$ with $|\bs{\alpha}|\leq s$, and obtain by the definition of the Sobolev space $H^s(D)$ that
\begin{align*}
\|\log\kappa\|_{L^2(\Omega,H^s(D))}^2=\sum\limits_{n=1}^{\infty}\lambda_n\normHp{\phi_n}{s}{D}^2.
\end{align*}
At last, an application of Theorem \ref{thm:truncationError} gives
\begin{align}
+\infty>\sum\limits_{n=1}^{\infty}\lambda_n\normHp{\phi_n}{s}{D}^2
&\approx C_{\ref{thm:truncationError}}\sum\limits_{n=1}^{\infty}n^{-\frac{2s}{d}-1}
\normHp{\phi_n}{s}{D}^2\nonumber\\
&= C_{\ref{thm:truncationError}}\sum\limits_{n=1}^{\infty}n^{-1-\epsilon}\cdot n^{-\frac{2s}{d}+\epsilon}
\normHp{\phi_n}{s}{D}^2\label{eq:777888}
\end{align}
for any positive parameter $\epsilon$. With $0<\epsilon\to 0$ we obtain from the relation \eqref{eq:777888} that, when $n$ is sufficiently large, there holds
\begin{align*}
\normHp{\phi_n}{ s}{D}\leq C(D,d, s)n^{\frac{s}{d}}.
\end{align*}
This verifies \eqref{eq:phi_theta} for $\theta=1$. By noting that $\normL{\phi_n}{D}=1$, an application of \cite[Theorem 3.3]{Agmon65} yields then the desired estimate.
\end{proof}

It is worth to emphasize the optimality of the eigenfunctions $\{\phi_n\}_{n=1}^\infty$ in the sense that the mean-square error resulting from a finite-rank approximation of $\kappa(y,x)$ is minimized \cite{ghanem2003stochastic}. Thus, the eigenfunctions indeed minimize the truncation error in the $L^2$-sense, i.e.
\begin{align}\label{eq:opt_eigen}
\min\limits_{\substack{\{c_n(x)\}_{n=1}^{M}\subset L^2(D)\\\{c_n(x)\}_{n=1}^{M} \text{ orthonormal}}}\normL{\log\kappa(y,x)-\sum\limits_{n=1}^{M}\left(\int_{D}\log\kappa(y,x)c_n(x)\mathrm{d}x\right) c_{n}(x)}{\Omega\times D}=\sqrt{\sum\limits_{n>M}\lambda_n}.
\end{align}
\subsection{Sampling estimate of the continuous Karhunen-Lo\`{e}ve approximation}
Clearly, any numerical computation of the covariance function $R(x,x^{\prime})$ by a conventional quadrature method quickly becomes expensive and impractical when the dimensionality $d'$ of the random domain $\Omega$ is large. This is due to the curse of dimensionality.
To this end, depending on the regularity prerequisites with respect to the stochastic variable $y$, the Monte Carlo method, the Quasi-Monte Carlo (QMC) methods or the sparse grid method may be employed in approximating $R(x,x')$. In this paper, we will focus on QMC.

Anyway, a numerical quadrature gives $R_N(x,x')$, which is defined by
\begin{align}\label{eq:QMC}
  R_{N}(x,x'):=\sum\limits_{n=1}^{N}\omega_n\log\kappa({y}_n,x)\log\kappa({y}_n,x'). 
\end{align}
Here, $N\in \mathbb{N}$ denotes the number of quadrature points and $\{{y}_1,\cdots,{y}_N\}$ and $\{\omega_1,\cdots,\omega_N\}$ are the corresponding
quadrature points and weights. Clearly, $R_N\in L^2(D\times D)$ and $R_N: D\times D\to \mathbb{R}$.

Analogously, we denote
by $\mathcal{R}_N$ the nonnegative self-adjoint Hilbert-Schmidt operator with kernel $R_N$. The operator $\mathcal{R}_N$ is of rank no greater than $N$ and hence compact. Analogously, we can define in nondecreasing order
its eigenvalues and its normalized eigenfunctions in $L^2(D)$ as $\{\lambda_n^N\}_{n=1}^{N}$
and $\{\phi_n^N\}_{n=1}^{N}$, respectively.

Note at this point the following: If we are interested in a specific approximate realization of $\log\kappa(y,\cdot)$ for some $y\in \Omega$, then we have to consider the function $\psi_n^{N}(y)$ defined by
\begin{equation}\label{eq:psi_N}
\psi_n^{N}(y)=\frac{1}{\sqrt{\lambda_n^{N}}}\int_{D}\log\kappa(y,x)\phi_n^{N}(x)\mathrm{d}x.
\end{equation}
To estimate the error between $\psi_n$ and $\psi_n^{N}$, we can apply finite elements $\mathcal{T}_h$ over $D$ as introduced in Subsection \ref{subsec:fem}. This error depends on the regularity of $\log\kappa(y,\cdot)$ for given $y\in \Omega$.
On the other hand, if we are only interested in certain statistical quantities of the Gaussian random field $\log\kappa$, then there is no need to calculate $\{\psi_n^{N}\}_{n=1}^{N}$, and we can take directly i.i.d normal random functions, e.g., $\{\psi_n\}_{n=1}^{N}$. This is indeed the situation many articles are concerned with, see e.g., \cite{Bachmayr.Cohen.Dinh.Schwab:2017,char12a, kuo2017multilevel}.

\subsection{Galerkin discretization of the sampled, truncated continuous Karhunen-Lo\`{e}ve approximation}\label{subsec:fem}
Now we describe the conforming Galerkin approximation of the eigenvalue problem on
$\mathcal{R}_N$. To this end, let $\mathcal{T}_{h}$ be a regular quasi-uniform triangulation over the
physical domain $D$ with a maximal mesh size $h$ and let $k:=\lceil s \rceil$. The
associated finite element space $V_h$ is defined by
\begin{align}\label{eq:FEspace}
V_h:=\{v\in H^{1}(D):v|_{K}\in P^{k}(K) \text{ for all } K\in \mc{T}_h\}.
\end{align}
Let $Q$ be the dimension of $V_h$. We then have $Q=\mathcal{O}\Big((\frac{h}{k})^d\Big)$. The $L^2$-projection $I_h: L^2(D)\to V_h$ has the approximation property \cite[Theorem 4.4.20]{MR2373954}
\begin{align}
\normL{v-I_hv}{D}&\leq C_{I_h}h^{s}\normHp{v}{s}{D} \text{ for all } v\in H^s(D)\label{eq:approxL2}\\
h^{d/2}\normI{v-I_hv}{D}&\leq C_{I_h}h^{s}\normHp{v}{s}{D} \text{ for all } v\in H^s(D) \text{ for } s>d/2. \label{eq:approxLinfty}
\end{align}
Here, the positive constant $C_{I_h}$ depends only on the regularity parameter of $\mathcal{T}_h$ and is independent of the mesh size $h$. 

The conforming Galerkin approximation of the eigenvalue problem of $\mathcal{R}_N$ is to find
$\{\lambda_n^{N,h},\phi_{n}^{N,h}\}_{n=1}^{Q}\subset \mathbb{R}\times V_h$ such that
\begin{align*}
I_h\Big(\lambda_n^{N,h}\phi_{n}^{N,h}-\mathcal{R}_N\phi_{n}^{N,h}\Big)=0.
\end{align*}
This is equivalent to the eigenvalue problem of the finite-rank operator on $L^2(D)$ defined by
\[
\mc{R}_{N,h}:= I_h\mc{R}_N I_h.
\]
Let $\{\lambda_n^{N,h},\phi_{n}^{N,h}\}_{n=1}^{Q}$ be the corresponding eigenpairs with eigenvalues in nonincreasing order and eigenvectors orthonormal in $L^2(D)$.
Then the $M$-term truncated KL expansion, denoted by $\kappa_M^{N,h}(y,x)$, is defined by
\begin{align}\label{eqn:KL_N}
\log\kappa_M^{N,h}(y,x):=\sum\limits_{n=1}^{M}\sqrt{\lambda_n^{N,h}}\phi_n^{N,h}(x)\psi_n(y).
\end{align}
Note at this point the following: Again, if we are mainly concerned with the approximation to a specific bivariate function $\log\kappa$ via the expression \eqref{eqn:KL_N}, then we have to replace $\psi_n$ with its numerical approximation
\begin{align}\label{eq:psireal}
{\psi}_n^{N,h}(y) :=\frac{1}{\sqrt{\lambda_n^{N,h}}}h^2\sum\limits_{n=1}^{N}\sum\limits_{K\in \mathcal{T}_h}\sum\limits_{x_j\in I_{K}}\log\kappa(x_j,y_n)\phi_n^{N,h}(x_j)L_n(y).
\end{align}
Here, $I_K$ represents the quadrature points on each finite element $K\in\mathcal{T}_h$ and
$\{L_n(y)\}_{n=1}^{N}$ denotes the Legendre polynomials of order $N$.
Note that ${\psi}_n^{N,h}(y)$ is the numerical approximation by interpolation with
sampling points $\{y_n\}_{n=1}^{N}$ to $\tilde{\psi}_n^{N,h}(y)$ defined as
\begin{align}\label{eq:psiIdeal}
\tilde{\psi}_n^{N,h}(y) :=\frac{1}{\sqrt{\lambda_n^{N,h}}}\int_{D}\log\kappa(y,x)\phi_n^{N,h}(x)\mathrm{d}x.
\end{align}
In view of the KL expansion \eqref{eq:KL} and the $M$-term truncation estimate \eqref{eqn:KL_N}, an application of the triangle inequality yields
\begin{equation}\label{TotalErr}
\begin{aligned}
\normL{\log\kappa-\log \kappa_M^{N,h}}{\Omega\times D}
&\leq
\Big\|{\sum\limits_{n>M}\sqrt{\lambda_n}\phi_n(x)\psi_n(y)}\Big\|_ {L^2(\Omega\times D)}\\
&+\Big\|{\sum\limits_{n=1}^{M}\Big(\sqrt{\lambda_n}\phi_n(x)\psi_n(y)
-\sqrt{\lambda_n^{N}}\phi_n^{N}(x)\psi_n(y)\Big)}\Big\|_ {L^2(\Omega\times D)} \\
&+
\Big\|{\sum\limits_{n=1}^{M}\Big(\sqrt{\lambda_n^{N}}\phi_n^{N}(x)\psi_n(y)
-\sqrt{\lambda_n^{N,h}}\phi_n^{N,h}(x)\psi_n(y)\Big)}\Big\|_{L^2(\Omega\times D)}.
\end{aligned}
\end{equation}
A main goal of this paper is to derive a sharp estimate of $\normL{\log\kappa-\log \kappa_M^{N,h}}{\Omega\times D}$ in \eqref{TotalErr}. To this end, it suffices to analyze the three terms on the right hand side of \eqref{TotalErr}. Here, the first term represents the truncation error that can be estimated by Theorem \ref{thm:truncationError}, the second term is due to sampling of the KL approximation and the third term is induced by the Galerkin approximation error.


\section{QMC method approximation error}\label{sec:qmc}
In this section, we apply the QMC method based on the randomly shifted lattice rule and derive the sampling error corresponding to the second term in \eqref{TotalErr}. To this end, we map the quadrature points
$\Xi_N:=\{\xi_1, \xi_2, \cdots,\xi_N\}\subset [0,1]^{d'}$ to $\mathbb{R}^{d'}$ by the inverse of the cumulative distribution function of the standard normal distribution.  The cumulative distribution function $\bs\phi(y)$ is defined by
\[
\bs{\phi}({y}):=\prod_{i=1}^{d'}\phi(y_i), \text{ where }
\phi:\mathbb{R}\to (0,1) \text{ with } \phi(y_i):=\int_{-\infty}^{y_i}\rho(y')dy'
\]
and its inverse is
$\bs{\phi}^{-1}({y}): (0,1)^{d'}\to \mathbb{R}^{d'}.$
Upon changing variables, we obtain
\begin{align}\label{eq:singular}
R(x,x')=\int_{[0,1]^{d'}}\log\kappa(\bs{\phi}^{-1}(z),x)
\log\kappa(\bs{\phi}^{-1}(z),x')\mathrm{d}z.
\end{align}
Then by taking ${y}_i:=\bs{\phi}^{-1}(\xi_i)$ and $\omega_i:=\frac{1}{N}$ for $i=1,2,\cdots, N$ in \eqref{eq:QMC}, we get an approximation to
the covariance function $R(x,x')$, which is denoted by $R_N(x,x')$.

To this end, we introduce the construction of the quadrature points $\Xi_N$, which is based on the fast CBC construction of randomly
shifted lattice rules in the unanchored space \cite{Nichols2014FastCC}. The unanchored space $\mathcal{F}(\mathbb{R}^{d'})$ is defined by
\begin{align}\label{eq:mathcalF}
\mathcal{F}(\mathbb{R}^{d'})&:=\Bigg\{v\in L^2(\mathbb{R}^{d'}):\|v\|_{\mathcal{F}(\mathbb{R}^{d'})}^2:=\nonumber\\
&\sum\limits_{\bs{\alpha}\subset\{1,\cdots,d'\}}\frac{1}{\gamma_{\bs{\alpha}}}
\int_{\mathbb{R}^{|\bs{\alpha}|}}
\Big(
\int_{\mathbb{R}^{d'-|\bs{\alpha}|}}\partial_{y}^{\bs{\alpha}}
v(y_{\bs{\alpha}};y_{-\bs{\alpha}})
\rho(y_{-\bs{\alpha}})\mathrm{d}y_{-\bs{\alpha}}
\Big)^2\nu(y_{\bs{\alpha}})\mathrm{d}y_{\bs{\alpha}}<\infty
\Bigg\}.
\end{align}
Here, the positive function $\nu$ controls the boundary behavior of the functions in $\mathcal{F}(\mathbb{R}^{d\rq{}})$. The collection of parameters $\gamma_{\bs{\alpha}}$ for all $\bs{\alpha}\subset\{1,\cdots,d\rq{}\}$ controls the relative importance of various groups of variables, and
\[
\rho(y_{-\bs{\alpha}})=\Pi_{j\in \{1,d'\}\backslash\bs{\alpha}}\rho(y_j) \quad  \text{ and }\quad
\nu(y_{\bs{\alpha}})=\Pi_{j\in\bs{\alpha}}\nu(y_j) .
\]
Note that we will choose the weight function $\nu$ and the weight parameters $\gamma_{\bs{\alpha}}$, such that the bivariate function $\log\kappa(\cdot,x)$ belongs to $\mathcal{F}(\mathbb{R}^{d'})$ for all $x\in D$.

We apply the CBC approach \cite[Algorithm 6]{Nichols2014FastCC} to derive the generating vector $\bs{z}\in [0,1)^{d'}$
with the number of sampling points being $N$. To this end, let the shift $\bs{\Delta}\in [0,1]^{d'}$ be an i.i.d uniformly distributed vector. Then we obtain the randomly shifted (rank-1) lattice rule by
\begin{align}\label{eq:lattice}
\xi_i:=\frac{i\bs{z}}{N}+\bs{\Delta}-\lfloor\frac{i\bs{z}}{N}+\bs{\Delta} \rfloor, i=1,\cdots,N.
\end{align}
Now, $R_N(x,x')$ in \eqref{eq:QMC} can be approximated by taking $y_i:=\bs{\phi}^{-1}(\xi_i)$ for $i=1,2,\cdots,N$.

The error $e_{d',N}(\bs{z})$ between $R(x,x')$ and $R_N(x,x')$ is measured by the shifted-averaged worse-case error defined by
\begin{align*}
e_{d',N}(\bs{z}):=\max_{v\in \mathcal{F}(\mathbb{R}^{d'})}\Bigg\{\Big(\int_{[0,1]^{d\rq{}}}\Big|
\int_{\mathbb{R}^{d'}}v(y)\dy-\frac{1}{N}\sum\limits_{i=1}^{N}v(y_i)\Big|^2
\mathrm{d}\bs{\Delta}\Big)^{1/2}\Bigg\}.
\end{align*}
 Thus, using
the CBC Algorithm to calculate $R_N(x,x')\in L^2(D\times D)$ defined in
\eqref{eq:QMC} yields a shifted-averaged worse-case error of $\mathcal{O}(N^{-1+\delta})$ for any $\delta>0$ with the construction
cost of $\mathcal{O}(d'N\log(N))$. Therefore, we start with the following setting.

\begin{assumption}[Assumption on the sampling error]\label{ass:discrepancy}
For some $\delta\in (0,1)$, there holds
\begin{align}\label{eq:OperApproxN}
\norm{\mc{R}-\mc{R}_N}_{\mc{B}(L^2(D))}\lesssim N^{-1+\delta}.
\end{align}
\end{assumption}
To approximate a bivariate function or a specific realization of the random field $\log\kappa(y,\cdot)$, we have introduced in the last section the quantities $\{\psi_n^{N}\}_{n=1}^{\infty}$, cf. \eqref{eq:psi_N}, which are not orthonormal in $L^2(\Omega)$. Nevertheless, they are very close to an orthonormal basis when the approximation error between $\mc{R}$ and $\mc{R}_N$ is very small.
\begin{lemma}[Near orthonormality of $\{\psi^N_{n}\}_{n=1}^{N}$]\label{lemma:orthognal}
Let ${\psi}^{N}_{n}$ be defined as in \eqref{eq:psi_N}. There holds
\[
\int_{\Omega}{\psi}^{N}_{n}{\psi}^{N}_{m}\dy=\delta_{m,n} +\frac{1}{\sqrt{\lambda^{N}_n\lambda^{N}_m}}\int_{D}(\mc{R}-\mc{R}_N)\phi^{N}_{m}\phi^{N}_{n}\mathrm{d}x\quad \text{ for all } 1\leq m,n\leq N.
\]
\end{lemma}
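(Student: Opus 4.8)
The plan is to recast everything in operator language and reduce the identity to the eigenvalue relation for $\mc{R}_N$. First I would note, comparing \eqref{eq:psi_N} with the definition \eqref{eq:S} of $\mathcal{S}$, that
\[
\psi_n^{N}=\frac{1}{\sqrt{\lambda_n^{N}}}\,\mathcal{S}\phi_n^{N},
\]
so that the quantity to be computed is just a rescaled $L^2(\Omega)$ inner product,
\[
\int_{\Omega}\psi_n^{N}\psi_m^{N}\dy=\frac{1}{\sqrt{\lambda_n^{N}\lambda_m^{N}}}\,\langle \mathcal{S}\phi_n^{N},\mathcal{S}\phi_m^{N}\rangle_{L^2(\Omega)}.
\]

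Next I would move one copy of $\mathcal{S}$ across the inner product using its adjoint \eqref{eq:ajoint_S} and recall $\mc{R}=\mathcal{S}^{*}\mathcal{S}$, which gives
\[
\langle \mathcal{S}\phi_n^{N},\mathcal{S}\phi_m^{N}\rangle_{L^2(\Omega)}=(\mathcal{S}^{*}\mathcal{S}\phi_n^{N},\phi_m^{N})=(\mc{R}\phi_n^{N},\phi_m^{N}),
\]
where $(\cdot,\cdot)$ is the $L^2(D)$ inner product. The only analytic point needing care here is the interchange of the $\Omega$- and $D$-integrations implicit in passing to the adjoint (equivalently, in the kernel representation of $R$); this is legitimate by Fubini's theorem, since under Assumption \ref{A:11} we have $\log\kappa\in L^{\infty}(\Omega,H^{s}(D))\subset L^2(\Omega\times D)$, so that $\mathcal{S}$ is Hilbert--Schmidt and all the manipulations above are justified.

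Then I would split $\mc{R}=\mc{R}_N+(\mc{R}-\mc{R}_N)$ and use that $\phi_n^{N}$ is an eigenfunction of $\mc{R}_N$ with eigenvalue $\lambda_n^{N}$, together with the $L^2(D)$-orthonormality of $\{\phi_n^{N}\}$:
\[
(\mc{R}_N\phi_n^{N},\phi_m^{N})=\lambda_n^{N}(\phi_n^{N},\phi_m^{N})=\lambda_n^{N}\delta_{nm}.
\]
Dividing by $\sqrt{\lambda_n^{N}\lambda_m^{N}}$, the diagonal contribution collapses to $\lambda_n^{N}\delta_{nm}/\sqrt{\lambda_n^{N}\lambda_m^{N}}=\delta_{nm}$ (the prefactor equals $1$ exactly when $n=m$, and the Kronecker delta kills the off-diagonal entries), while the remaining contribution is precisely $\tfrac{1}{\sqrt{\lambda_n^{N}\lambda_m^{N}}}\int_{D}(\mc{R}-\mc{R}_N)\phi_m^{N}\phi_n^{N}\mathrm{d}x$; here I use the self-adjointness of $\mc{R}-\mc{R}_N$ to write the bilinear form symmetrically in $m$ and $n$, matching the stated form. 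This yields the assertion.

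I do not expect a genuine obstacle: the proof is a direct computation. The only things to keep honest are purely bookkeeping, namely that the $\lambda_n^{N},\lambda_m^{N}$ with $1\le n,m\le N$ appearing under the square roots are the nonzero eigenvalues of the finite-rank operator $\mc{R}_N$ (so that the normalization in \eqref{eq:psi_N} is well defined), and that the symmetric roles of $m$ and $n$ are consistent with the nonincreasing ordering of the eigenvalues.
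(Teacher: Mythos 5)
Your computation is correct and is exactly the argument the paper intends: the paper's proof consists of the single remark that the identity is ``a direct consequence of the definition \eqref{eq:psi_N} and the eigenvalue problem for $\mc{R}_N$,'' and your write-up simply fills in that computation (reducing the $L^2(\Omega)$ inner product to $(\mc{R}\phi_n^N,\phi_m^N)$ via $\mc{R}=\mc{S}^*\mc{S}$ and splitting off $\mc{R}_N$). No gap and no genuine difference in approach.
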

\begin{proof}
This is a direct consequence of the definition \eqref{eq:psi_N} and the eigenvalue problem for $\mc{R}_N$.
\end{proof}
Next, we give some estimates on the finite-rank approximation $\mathcal{R}_N$ and its spectrum.
\begin{proposition}[Spectral estimate for $\mc{R}_N$]\label{prop:recall}
Let Assumption \ref{A:11} hold, let $N\in \mathbb{N}_{+}$ be sufficiently large and let $M:= \lfloor  N^{\frac{1}{2s/d+1}}\rfloor$. Then $\mathcal{R}_N\in \mathcal{B}(L^2(D), H^s(D))$ with
\begin{align}\label{eq:operNnorm}
\norm{\mathcal{R}_N}_{\mathcal{B}(L^2(D), H^s(D))}
\leq \norm{\log\kappa}_{L^{\infty}(\Omega,L^2(D))}
\norm{\log\kappa}_{L^{\infty}(\Omega,H^s(D))}.
\end{align}
For $1\leq n\leq N$, there holds
\begin{align}\label{eq:555}
\norm{\phi_n^{N}}_{H^s(D)}&\leq(\lambda_n^{N})^{-1}\norm{\mathcal{R}_N}_{\mathcal{B}(L^2(D), H^s(D))}.
\end{align}
Furthermore, let $\lambda_{k_i}$ be an eigenvalue of $\mc{R}$ with multiplicity $q_i$ for $i=1,2,\cdots$ and $k_{I-1}< N\leq k_{I}$ for some $I\in\mathbb{N}_{+}$.
Assume that for sufficiently large $N$, there holds
\begin{align}\label{eq:spec_gap}
\frac{1}{N}\ll \min\limits_{i=2,\cdots,I}\{\lambda_{k_i}-\lambda_{k_{i-1}}\}.
\end{align}
Then, for $1\leq n\leq N$, there holds
\begin{align}
{{\lambda_n^{N}}}&\lesssim \max\{n^{-\frac{2s}{d}-1},N^{-1}\}\label{eq:specN}.
\end{align}
In addition,
\begin{align}\label{eq:eigenvectorN}
\normL{\phi_n^{N}-\phi_n}{D}\lesssim N^{-1/2} \quad\text{ for all }\quad 1\leq n \leq M.
\end{align}
\end{proposition}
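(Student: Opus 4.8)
The plan is to establish the four assertions in turn, since the later ones build on the earlier ones.

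First, for the mapping property \eqref{eq:operNnorm} I would argue directly from the definition \eqref{eq:QMC} of the kernel. Writing $\mathcal{R}_N v(x)=\sum_{n=1}^N\omega_n\log\kappa(y_n,x)\big(\int_D\log\kappa(y_n,x')v(x')\,\mathrm{d}x'\big)$ and using $\omega_n=1/N$ together with $\sum_{n=1}^N\omega_n=1$, the triangle inequality in $H^s(D)$ and the Cauchy--Schwarz inequality applied to the inner integral give $\|\mathcal{R}_Nv\|_{H^s(D)}\le\sum_n\omega_n\|\log\kappa(y_n,\cdot)\|_{H^s(D)}\|\log\kappa(y_n,\cdot)\|_{L^2(D)}\|v\|_{L^2(D)}$. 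Bounding each factor by the corresponding $L^\infty(\Omega,\cdot)$-norm, which is finite under Assumption \ref{A:11}, yields \eqref{eq:operNnorm}. The eigenfunction regularity \eqref{eq:555} is then immediate: from $\mathcal{R}_N\phi_n^N=\lambda_n^N\phi_n^N$ one has $\phi_n^N=(\lambda_n^N)^{-1}\mathcal{R}_N\phi_n^N$, so that $\|\phi_n^N\|_{H^s(D)}\le(\lambda_n^N)^{-1}\|\mathcal{R}_N\|_{\mathcal{B}(L^2(D),H^s(D))}\|\phi_n^N\|_{L^2(D)}$ with $\|\phi_n^N\|_{L^2(D)}=1$.

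Next, for the spectral bound \eqref{eq:specN} I would invoke the maximin (Courant--Fischer) characterization, equivalently the approximation-number description \eqref{eq:a_n}, to obtain Weyl's perturbation inequality $|\lambda_n^N-\lambda_n|\le\|\mathcal{R}-\mathcal{R}_N\|_{\mathcal{B}(L^2(D))}$; concretely one may take the rank-$(n-1)$ operator $L=\sum_{j<n}\lambda_j\,\phi_j\otimes\phi_j$ in \eqref{eq:a_n}. Combining this with the decay $\lambda_n\approx C n^{-2s/d-1}$ from Theorem \ref{thm:truncationError} and the sampling bound $\|\mathcal{R}-\mathcal{R}_N\|\lesssim N^{-1+\delta}$ of Assumption \ref{ass:discrepancy} gives $\lambda_n^N\lesssim n^{-2s/d-1}+N^{-1+\delta}$, which for arbitrarily small $\delta$ is \eqref{eq:specN}. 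It is worth recording that the threshold $M=\lfloor N^{1/(2s/d+1)}\rfloor$ is exactly the index at which $\lambda_M\approx M^{-2s/d-1}\approx N^{-1}$ meets the sampling error, so $\lambda_n\gtrsim N^{-1}$ precisely for $n\le M$; beyond $M$ the true eigenvalues fall below the sampling accuracy and cannot be resolved. Note that \eqref{eq:specN} itself does not require the gap condition \eqref{eq:spec_gap}.

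The crux is the eigenvector estimate \eqref{eq:eigenvectorN}, and here the gap condition \eqref{eq:spec_gap} is essential. I would proceed by a resolvent / spectral-projection (Davis--Kahan) argument. Fix $n\le M$, let $\lambda_n=\lambda_{k_i}$ be the corresponding distinct eigenvalue of $\mathcal{R}$ with orthogonal eigenprojection $P$, and set $Q=I-P$. Using $\mathcal{R}\phi_n^N=\lambda_n^N\phi_n^N+(\mathcal{R}-\mathcal{R}_N)\phi_n^N$ one obtains the identity $(\lambda_n-\mathcal{R})\phi_n^N=(\lambda_n-\lambda_n^N)\phi_n^N-(\mathcal{R}-\mathcal{R}_N)\phi_n^N$, whose right-hand side has $L^2$-norm $\lesssim|\lambda_n-\lambda_n^N|+\|\mathcal{R}-\mathcal{R}_N\|\lesssim N^{-1+\delta}$ by \eqref{eq:specN} and Assumption \ref{ass:discrepancy}. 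Since $\lambda_n-\mathcal{R}$ is invertible on the range of $Q$ with inverse of norm $1/\mathrm{gap}$, where $\mathrm{gap}$ is the distance from $\lambda_n$ to the nearest distinct eigenvalue, this gives $\|Q\phi_n^N\|_{L^2(D)}\lesssim N^{-1+\delta}/\mathrm{gap}$. Choosing $\phi_n$ to be the normalized $P$-projection of $\phi_n^N$ makes the in-eigenspace discrepancy $\|P(\phi_n^N-\phi_n)\|$ quadratically small in $\|Q\phi_n^N\|$, whence $\|\phi_n^N-\phi_n\|_{L^2(D)}\lesssim\|Q\phi_n^N\|_{L^2(D)}$, and the gap condition \eqref{eq:spec_gap} is then used to control the denominator.

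The main obstacle is precisely this last step. First, multiplicities force one to track the eigenprojection onto the entire eigenspace of a distinct eigenvalue rather than individual eigenvectors, since $\phi_n$ and $\phi_n^N$ are only defined up to rotations within their eigenspaces; the gap condition guarantees that the perturbed eigenvalues of $\mathcal{R}_N$ belonging to the cluster around $\lambda_{k_i}$ stay separated from the rest, so that the Riesz projection along a fixed contour is well defined. Second, and quantitatively more delicate, turning the abstract bound $\|Q\phi_n^N\|\lesssim\|\mathcal{R}-\mathcal{R}_N\|/\mathrm{gap}$ into the clean rate $N^{-1/2}$ requires the spectral gap on the range $n\le M$ to dominate the sampling error appropriately; one must verify, using $\lambda_n\gtrsim N^{-1}$ there together with \eqref{eq:spec_gap}, that $\|\mathcal{R}-\mathcal{R}_N\|/\mathrm{gap}\lesssim N^{-1/2}$, which is exactly where the interplay between $M$, $N$, and the clustering structure of the spectrum is genuinely used.
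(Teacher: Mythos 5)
Your treatment of \eqref{eq:operNnorm} and \eqref{eq:555} coincides with the paper's (triangle plus Cauchy--Schwarz for the operator norm, and $\phi_n^N=(\lambda_n^N)^{-1}\mathcal{R}_N\phi_n^N$ for the eigenfunctions). For \eqref{eq:specN}, however, you take a genuinely different and more elementary route: Weyl's perturbation inequality $|\lambda_n^N-\lambda_n|\le\|\mathcal{R}-\mathcal{R}_N\|_{\mathcal{B}(L^2(D))}$ via the approximation numbers \eqref{eq:a_n}. This is clean, but it only yields $\lambda_n^N\lesssim n^{-2s/d-1}+N^{-1+\delta}$, and the $\delta$ in Assumption \ref{ass:discrepancy} is a \emph{fixed} positive number, not a free parameter you may send to zero; $N^{-1+\delta}$ is not $\lesssim N^{-1}$ uniformly in $N$. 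Your parenthetical claim that \eqref{eq:specN} does not require the gap condition is therefore at odds with the stated rate: the paper's maximin/Lagrange-multiplier argument invokes \eqref{eq:spec_gap} precisely to upgrade $N^{-1+\delta}$ to the clean bound $|\lambda_n^N-\lambda_n|\lesssim N^{-1}$ (their \eqref{eq:6666}), and that sharpened bound is what is consumed downstream (e.g.\ in Proposition \ref{prop:1stErr}, where $(\sqrt{\lambda_n}-\sqrt{\lambda_n^N})^2\le|\lambda_n-\lambda_n^N|$ summed over $n\le M$ must give $M/N$, not $M/N^{1-\delta}$). What your route buys is simplicity and generality; what it loses is exactly the factor $N^{\delta}$ that the proposition's statement does not permit.

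For \eqref{eq:eigenvectorN} your Davis--Kahan outline is morally the paper's own argument: their explicit computation for $n=1$, namely $\bigl(1-\lambda_{q_1+1}/\lambda_1\bigr)\|Q\phi_1^N\|_{L^2(D)}^2\le\lambda_1^{-1}(\lambda_1-\lambda_1^N)-\lambda_1^{-1}((\mathcal{R}-\mathcal{R}_N)\phi_1^N,\phi_1^N)$ followed by redefining $\phi_1$ as the normalized in-eigenspace projection, is exactly a hands-on bound on the complementary projection. The genuine gap is that you defer the decisive quantitative step and, as you yourself note, it cannot be closed from the stated hypotheses alone: with $\|\mathcal{R}-\mathcal{R}_N\|\lesssim N^{-1+\delta}$ and only $\mathrm{gap}\gg N^{-1}$ from \eqref{eq:spec_gap}, the ratio $\|\mathcal{R}-\mathcal{R}_N\|/\mathrm{gap}$ can be as large as $N^{\delta}$, which is nowhere near $N^{-1/2}$. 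The argument only closes if the relevant spectral gaps for $n\le M$ are treated as $O(1)$ constants (as the paper implicitly does with the factor $1-\lambda_{q_1+1}/\lambda_1$), and you would need to state and use that strengthening explicitly rather than flag it as ``where the interplay is genuinely used.'' So: same skeleton, but the two quantitative conclusions \eqref{eq:specN} and \eqref{eq:eigenvectorN} are not actually reached as stated.
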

\begin{proof}
We can obtain from the definition \eqref{eq:QMC} and the triangle inequality
\begin{align*}
\norm{\mathcal{R}_N}_{\mathcal{B}(L^2(D), H^s(D))}
\leq \frac{1}{N}\sum\limits_{n=1}^{N}\normL{\log\kappa(y_n,\cdot)}{ D}
\normHp{\log\kappa(y_n,\cdot)}{s}{D}.
\end{align*}
Then Assumption \ref{A:11} leads to \eqref{eq:operNnorm}. The relation \eqref{eq:555} is derived from the definition. To prove \eqref{eq:specN}, fix $1\leq n\leq N$ and let $V_n=\text{span}\{\phi_1,\cdots,\phi_n\}$ be a $n$-dimension subspace. Since $\mc{R}$ and $\mc{R}_N$ are nonnegative and self-adjoint, we obtain
\begin{align}\label{eq:444}
\lambda_n-\lambda_n^{N}\leq \lambda_n-\min\limits_{v\in V_n}\frac{(\mc{R}_Nv,v)}{(v,v)}.
\end{align}
Next we estimate the lower bound of the minimum on the right hand side of \eqref{eq:444}. To this end, note that any
$v\in V_n$ admits the expression $v=\sum\limits_{i=1}^n c_i\phi_i$ for some $\{c_i\}_{i=1}^n\subset \mathbb{R}^n$. Let $(v,v):=1$, then $\sum\limits_{i=1}^n c_i^2=1$. For any $\delta>0$, plugging in the expression for $v$ and applying \eqref{eq:OperApproxN} lead to
\begin{align}
\min\limits_{v\in V_n}\frac{(\mc{R}_Nv,v)}{(v,v)}&=\min\limits_{v\in V_n}\frac{(\mc{R}v,v)}{(v,v)}+\frac{((\mc{R}_N-\mc{R})v,v)}{(v,v)}\nonumber\\
&\geq \min\limits_{\sum\limits_{i=1}^n c_i^2=1}\sum\limits_{i=1}^{n}\lambda_{i}c_i^2-{N^{-1+\delta}}\sum\limits_{i=1}^{n}\sum\limits_{j=1}^{n} c_i c_j\nonumber\\
&=:\min\limits_{\sum\limits_{i=1}^n c_i^2=1}f(c_1,\cdots,c_n).\label{eq:333}
\end{align}
The lower bound of the minimum can now be estimated using Lagrange multipliers. To this end, let $\mu\in \mathbb{R}$ and define
\begin{align*}
F(c_1,\cdots,c_n;\mu):=f(c_1,\cdots,c_n)-\mu(\sum\limits_{i=1}^n c_i^2-1)
\end{align*}
Let $(\boldsymbol{c^*},\mu^*)=(c_1^*,\cdots,c_n^*,\mu^{*})$ be the optimal point to the unconstrained minimization problem associated to $F(c_1,\cdots,c_n;\mu)$. Then $c_1^*,\cdots,c_n^*$ have the same sign by the definition of $f$. Let $c_i^*\geq 0$ for all $1\leq i\leq n$. The optimality conditions read
$\frac{\partial F}{\partial c_i}|_{(\boldsymbol{c^*},\mu^*)}=0$  for $i=1,\cdots,n$, and $
\frac{\partial F}{\partial \mu}|_{(\boldsymbol{c^*},\mu^*)}=0$.
This immediately implies
\begin{align}
\min\limits_{\sum\limits_{i=1}^n c_i^2=1}f(c_1,\cdots,c_n)=f(\boldsymbol{c^*})&=\mu^*\quad \text{ and }\quad \forall 1\leq i\leq n: (\lambda_i-\mu^{*})c_i^*=\frac{1}{N^{1-\delta}}\sum\limits_{j=1}^{n}c_j^{*}.\label{eq:222}
\end{align}
The second relation in \eqref{eq:222} implies for all $1\leq i\leq n$ that there holds
\begin{align*}
\frac{1}{\lambda_i-\mu^{*}}=N^{1-\delta}\frac{c_i^*}{\sum\limits_{j=1}^{n}c_j^{*}}.
\end{align*}
Summing over $i=1,\cdots,n$ yields
\begin{align}
\sum\limits_{i=1}^n\frac{1}{\lambda_i-\mu^*}&=N^{1-\delta}\label{eq:111}.
\end{align}

Recall that $c_i^{*}\geq 0$ for all $i=1,\cdots,n$. Together with \eqref{eq:222}, this implies
$
\mu^{*}< \lambda_n.
$
Now combining \eqref{eq:spec_gap} and \eqref{eq:111} results in
$
\frac{1}{\lambda_n-\mu^*}\gtrsim N,
$
and therefore, $\mu^*\geq \lambda_n-C_1 N^{-1}$ for some positive constant $C_1$ independent of $N$. This, together with \eqref{eq:222}, \eqref{eq:333} and \eqref{eq:444}, gives
$
\lambda_n-\lambda_n^{N}\lesssim N^{-1}.
$
Analogously, by changing the roles of $\mc{R}$ and $\mc{R}_N$, we can show
\begin{align}\label{eq:6666}
|\lambda_n^{N}-\lambda_n|\lesssim N^{-1}.
\end{align}
Consequently, \eqref{eq:specN} follows by Theorem \ref{thm:truncationError}.

It remains to prove \eqref{eq:eigenvectorN}. We only present the proof for $n=1$. For $n>1$, \eqref{eq:eigenvectorN} can
be shown similarly to \cite[Theorem 9.1]{babuska&osborn91}. Since the whole space $L^2(D)$ is orthogonally
decomposed  as the direct sum of the range of $\mc{R}$ and its kernel, $\phi_1^{N}$ can be split into
\begin{align}
\phi_1^{N}:=\sum\limits_{i=1}^{\infty}c_i \phi_i+v
\end{align}
for some $v\in L^2(D)$ satisfying $\mc{R}v=0$. Recall that $\lambda_1=\cdots\lambda_{q_1}>\lambda_{q_1+1}$. This leads to
\begin{align*}
&\big(1-\frac{\lambda_{q_1+1}}{\lambda_1}\big)\normL{\phi_1^N-\sum\limits_{i=1}^{q_1}c_i \phi_i}{D}^2  =\big(1-\frac{\lambda_{q_1+1}}{\lambda_1}\big)\Big(  \sum\limits_{n=q_1+1}^{\infty}c_n^2+\normL{v}{D}^2\Big)\\
&\leq \sum\limits_{n=1}^{\infty}\big(1-\frac{\lambda_{n}}{\lambda_1}\big)c_n^2+\normL{v}{D}^2=(\phi_1^N,\phi_1^N)-\lambda_1^{-1}(\mc{R}\phi_1^N,\phi_1^N)\\
&=\lambda_1^{-1}(\lambda_1-\lambda_1^{N})-\lambda_1^{-1}((\mc{R}-\mc{R}_N)\phi_1^N,\phi_1^N),
\end{align*}
which, combined with \eqref{eq:OperApproxN} and \eqref{eq:6666}, gives
\begin{align*}
(1-\frac{\lambda_{q_1+1}}{\lambda_1})\normL{\phi_1^N-\sum\limits_{i=1}^{q_1}c_i \phi_i}{D}^2
\lesssim \lambda_1^{-1}N^{-1}.
\end{align*}
By redefining $\phi_1$ to be
$\big(\sum\limits_{i=1}^{q_1}c_i^2\big)^{-\frac{1}{2}}\sum\limits_{i=1}^{q_1}c_i \phi_i,
$
\eqref{eq:eigenvectorN} is proved due to the spectral gap assumption \eqref{eq:spec_gap}.
\end{proof}
\begin{remark}
If the number of sampling points $N$ is not sufficiently large then the spectral gap assumption \eqref{eq:spec_gap} is not fulfilled. Then one can show that for $1\leq n\leq N$, there holds
\begin{align}
{{\lambda_n^{N}}}&\lesssim \max\{n^{-\frac{2s}{d}-1},\frac{n}{N}\}\label{eq:specN2}.
\end{align}
Thus, to make $\lambda_n^{N}$ an accurate approximation to $\lambda_n$ for $1\leq n\leq M$, we have to impose a much more stringent restriction on $M:= \lfloor  N^{\frac{1}{2s/d+2}}\rfloor$. In this manner, we can show that $\frac{n}{N}\ll n^{-\frac{2s}{d}-1}$ for $1\leq n\leq M$.
\end{remark}

Finally, we can give an estimate to the second term in \eqref{TotalErr}:
\begin{proposition}[Root mean square error to the second term in \eqref{TotalErr}]\label{prop:1stErr}
Let $N$ be sufficiently large and $M\leq \lfloor  N^{\frac{1}{2s/d+1}}\rfloor$. Furthermore, let \eqref{eq:spec_gap} be satisfied. Then there holds
\[
\normL{\sum\limits_{n=1}^{M}\Big(\sqrt{\lambda_n}\phi_n(x)\psi_n(y)
-\sqrt{\lambda_n^{N}}\phi_n^{N}(x)\psi_n(y)\Big)} {\Omega\times D}\lesssim M^{1/2}N^{-1/2}. 
\]
\end{proposition}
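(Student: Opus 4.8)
The plan is to exploit the exact orthonormality of the system $\{\psi_n\}_{n=1}^{\infty}$ in $L^2(\Omega)$, which decouples the $L^2(\Omega\times D)$-norm into a sum of purely physical-space contributions. I would write $g_n(x):=\sqrt{\lambda_n}\phi_n(x)-\sqrt{\lambda_n^{N}}\phi_n^{N}(x)$, so that the quantity to be bounded is $\normL{\sum_{n=1}^{M}\psi_n(y)g_n(x)}{\Omega\times D}$. Squaring and invoking Fubini together with the relation $\int_{\Omega}\psi_n\psi_m\dy=\delta_{m,n}$ collapses the double sum over the stochastic variable, leaving
\[
\normL{\sum_{n=1}^{M}\psi_n(y)g_n(x)}{\Omega\times D}^2=\sum_{n=1}^{M}\normL{g_n}{D}^2.
\]

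First I would estimate each term $\normL{g_n}{D}$ by the add-and-subtract decomposition
\[
g_n=\sqrt{\lambda_n}\,(\phi_n-\phi_n^{N})+(\sqrt{\lambda_n}-\sqrt{\lambda_n^{N}})\,\phi_n^{N},
\]
followed by the triangle inequality. For the first piece I would use that $\lambda_n\leq\lambda_1$ is bounded (by the ordering and Theorem \ref{thm:truncationError}), so that $\sqrt{\lambda_n}\lesssim 1$, and combine this with the eigenfunction estimate \eqref{eq:eigenvectorN} to obtain $\sqrt{\lambda_n}\normL{\phi_n-\phi_n^{N}}{D}\lesssim N^{-1/2}$ for every $1\leq n\leq M$. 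For the second piece I would recall that $\normL{\phi_n^{N}}{D}=1$ and apply the elementary inequality $|\sqrt{a}-\sqrt{b}|\leq\sqrt{|a-b|}$ together with the eigenvalue estimate \eqref{eq:6666}, giving $|\sqrt{\lambda_n}-\sqrt{\lambda_n^{N}}|\leq\sqrt{|\lambda_n-\lambda_n^{N}|}\lesssim N^{-1/2}$. Hence $\normL{g_n}{D}\lesssim N^{-1/2}$ uniformly in $n$.

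Finally I would sum these bounds: since each $\normL{g_n}{D}^2\lesssim N^{-1}$ and there are $M$ terms, we get $\sum_{n=1}^{M}\normL{g_n}{D}^2\lesssim M N^{-1}$, and taking the square root yields the claimed bound $M^{1/2}N^{-1/2}$.

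I do not expect a serious obstacle here: once the orthonormality of $\{\psi_n\}$ is used to reduce everything to the sum $\sum_{n}\normL{g_n}{D}^2$, the remaining work is a routine application of the two perturbation estimates from Proposition \ref{prop:recall}. The only point requiring care is that both \eqref{eq:eigenvectorN} and \eqref{eq:6666} are guaranteed to hold uniformly over the relevant range $1\leq n\leq M$ precisely because $M\leq\lfloor N^{1/(2s/d+1)}\rfloor$ and the spectral gap assumption \eqref{eq:spec_gap} is in force; this is what prevents any hidden $n$-dependence from degrading the uniform $N^{-1/2}$ bound on $\normL{g_n}{D}$.
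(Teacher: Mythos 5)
Your proposal is correct and follows essentially the same route as the paper: orthonormality of $\{\psi_n\}$ reduces the norm to $\sum_{n=1}^{M}\normL{g_n}{D}^2$, an add-and-subtract decomposition splits $g_n$ into an eigenvalue-difference piece and an eigenfunction-difference piece, and the bounds \eqref{eq:6666} and \eqref{eq:eigenvectorN} give the uniform $N^{-1/2}$ estimate for each. The only cosmetic difference is which factor ($\sqrt{\lambda_n}$ versus $\sqrt{\lambda_n^{N}}$) multiplies the eigenfunction difference, which is immaterial since both are bounded.
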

\begin{proof}
Employing the triangle inequality yields
 \begin{align*}
&\normL{\sum\limits_{n=1}^{M}\Big(\sqrt{\lambda_n}\phi_n(x)\psi_n(y)
-\sqrt{\lambda_n^{N}}\phi_n^{N}(x)\psi_n(y)\Big)} {\Omega\times D}^2\\&=
\normL{\sum\limits_{n=1}^{M}\Big((\sqrt{\lambda_n}-\sqrt{\lambda_n^N})\phi_n(x)\psi_n(y)
+\sqrt{\lambda_n^{N}}(\phi_n(x)-\phi_n^{N}(x))\psi_n(y)\Big)} {\Omega\times D}^2\\
&\leq \sum\limits_{n=1}^{M}\Big( (\sqrt{\lambda_n}-\sqrt{\lambda_n^N})^2 + \lambda_n^N\normL{\phi_n^{N}-\phi_n}{D}^2\Big).
\end{align*}
Then the desired result follows from \eqref{eq:6666} and \eqref{eq:eigenvectorN}.
\end{proof}
Next we give an estimate on $\|\psi_n-\psi_n^N\|_{L^2(\Omega)}$.
\begin{lemma}[Estimate on $\|\psi_n-\psi_n^N\|_{L^2(\Omega)}$]\label{lemma:psi_N}
Let $N$ be sufficiently large and $M\leq \lfloor  N^{\frac{1}{2s/d+1}}\rfloor$. Furthermore, let \eqref{eq:spec_gap} be satisfied. Then there holds
\begin{align*}
\|\psi_n-\psi_n^N\|_{L^2(\Omega)}\lesssim \frac{1}{\sqrt{\lambda_n^{N}}} N^{-1/2}\quad\text{ for all } n=1,\cdots, M.
\end{align*}
\end{lemma}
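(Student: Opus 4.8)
The plan is to start from the algebraic identity for the difference $\psi_n - \psi_n^N$ obtained directly from the definitions \eqref{eq:psi} and \eqref{eq:psi_N}. Writing
\[
\psi_n(y)-\psi_n^{N}(y)
=\Big(\tfrac{1}{\sqrt{\lambda_n}}-\tfrac{1}{\sqrt{\lambda_n^{N}}}\Big)\int_{D}\log\kappa(y,x)\phi_n(x)\,\mathrm{d}x
+\tfrac{1}{\sqrt{\lambda_n^{N}}}\int_{D}\log\kappa(y,x)\big(\phi_n(x)-\phi_n^{N}(x)\big)\,\mathrm{d}x,
\]
we decompose the error into a part coming from the eigenvalue perturbation and a part coming from the eigenfunction perturbation. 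Taking the $L^2(\Omega)$-norm and applying the triangle inequality then reduces the estimate to bounding these two contributions separately.

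For the first term, the factor $\big(\tfrac{1}{\sqrt{\lambda_n}}-\tfrac{1}{\sqrt{\lambda_n^{N}}}\big)$ multiplies $\sqrt{\lambda_n}\,\psi_n(y)$ (using the definition of $\psi_n$), whose $L^2(\Omega)$-norm is $\sqrt{\lambda_n}$ since $\psi_n$ is orthonormal. Thus the first contribution is controlled by $\big|1-\sqrt{\lambda_n/\lambda_n^N}\big|$, which I would bound using the eigenvalue estimate \eqref{eq:6666}, namely $|\lambda_n^N-\lambda_n|\lesssim N^{-1}$, together with the elementary inequality $|\sqrt{a}-\sqrt{b}|\leq\sqrt{|a-b|}$. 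For the second term, I would bound $\int_D \log\kappa(y,x)(\phi_n-\phi_n^N)(x)\,\mathrm{d}x$ via Cauchy–Schwarz in $x$ and then take the $L^2(\Omega)$-norm, yielding a factor $\tfrac{1}{\sqrt{\lambda_n^N}}\|\log\kappa\|_{L^2(\Omega\times D)}\,\|\phi_n-\phi_n^N\|_{L^2(D)}$; the eigenfunction perturbation estimate \eqref{eq:eigenvectorN} then gives $\|\phi_n-\phi_n^N\|_{L^2(D)}\lesssim N^{-1/2}$.

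Combining the two bounds, the dominant $\tfrac{1}{\sqrt{\lambda_n^N}}N^{-1/2}$ scaling emerges from the eigenfunction term, while the eigenvalue term contributes at worst a comparable order after using that $\lambda_n^N\lesssim 1$. The restriction $n\leq M=\lfloor N^{1/(2s/d+1)}\rfloor$ together with the spectral gap assumption \eqref{eq:spec_gap} is precisely what guarantees that both \eqref{eq:6666} and \eqref{eq:eigenvectorN} are available in this regime, so the whole argument is valid for $1\leq n\leq M$.

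The main obstacle I anticipate is the careful handling of the $\tfrac{1}{\sqrt{\lambda_n^N}}$ prefactor in the first term: one must verify that the eigenvalue-difference contribution genuinely reduces to $\big|1-\sqrt{\lambda_n/\lambda_n^N}\big|$ and is bounded by $\tfrac{1}{\sqrt{\lambda_n^N}}N^{-1/2}$ rather than a larger quantity. This requires recognizing that the factor $\sqrt{\lambda_n}$ from the definition of $\psi_n$ combines with $\big(\tfrac{1}{\sqrt{\lambda_n}}-\tfrac{1}{\sqrt{\lambda_n^N}}\big)$ to produce exactly $\big|1-\sqrt{\lambda_n/\lambda_n^N}\big|$, and then estimating this ratio sharply using the absolute eigenvalue bound \eqref{eq:6666} rather than a relative one. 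Everything else is a routine application of the triangle inequality, Cauchy–Schwarz, and the previously established perturbation estimates.
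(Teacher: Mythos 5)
Your proposal is correct and follows essentially the same route as the paper's proof: the identical decomposition of $\psi_n-\psi_n^N$ into an eigenvalue-perturbation term and an eigenfunction-perturbation term, with the first bounded via $\big|1-\sqrt{\lambda_n/\lambda_n^N}\big|\leq\sqrt{|\lambda_n-\lambda_n^N|}/\sqrt{\lambda_n^N}$ using \eqref{eq:6666} and the second via Cauchy--Schwarz and \eqref{eq:eigenvectorN}. No further comment is needed.
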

\begin{proof}
By \eqref{eq:psi}, we obtain
\begin{align*}
\psi_n(y)-\psi_n^{N}(y)&=\frac{1}{\sqrt{\lambda_n}}\int_{D}\log\kappa(y,x)\phi_n(x)\mathrm{d}x-\frac{1}{\sqrt{\lambda_n^{N}}}\int_{D}\log\kappa(y,x)\phi_n^{N}(x)\mathrm{d}x\\
&=\Big(\frac{1}{\sqrt{\lambda_n}}-\frac{1}{\sqrt{\lambda_n^{N}}}\Big)\int_{D}\log\kappa(y,x)\phi_n(x)\mathrm{d}x+\frac{1}{\sqrt{\lambda_n^{N}}}\int_{D}\log\kappa(y,x)(\phi_n(x)-\phi_n^{N}(x))\mathrm{d}x.
\end{align*}
Therefore, taking the $L^2(\Omega)$-norm on both sides yields
\begin{align*}
\|\psi_n-\psi_n^N\|_{L^2(\Omega)}&\leq \Big|1-\sqrt{\frac{{\lambda_n}}{{\lambda_n^N}}}\Big|+\frac{1}{\sqrt{\lambda_n^{N}}}\|\log\kappa\|_{L^2(\Omega\times D)}\normL{\phi_n^{N}-\phi_n}{D}\\
&\lesssim \frac{1}{\sqrt{\lambda_n^{N}}} N^{-1/2},
\end{align*}
where, in the last inequality, we have applied \eqref{eq:eigenvectorN} and the inequality  $|\sqrt{a}-\sqrt{b}|\leq \sqrt{|a-b|}$ for
all $a,b\geq 0$.
\end{proof}

In order to numerically approximate each realization of the Gaussian random field
$\log\kappa(y,\cdot)$ for given $y\in \Omega$, we need the following result.
\begin{proposition}\label{prop:1stErrbivariate}
Let $N$ be sufficiently large and $M\leq \lfloor  N^{\frac{1}{2s/d+1}}\rfloor$. Furthermore, let \eqref{eq:spec_gap} be satisfied. Then
\[
\normL{\sum\limits_{n=1}^{M}\Big(\sqrt{\lambda_n}\phi_n(x)\psi_n(y)
-\sqrt{\lambda_n^{N}}\phi_n^{N}(x)\psi_n^{N}(y)\Big)} {\Omega\times D}\lesssim MN^{-1/2}. 
\]
\end{proposition}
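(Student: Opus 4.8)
The plan is to reduce the statement to the two results already in hand, Proposition~\ref{prop:1stErr} and Lemma~\ref{lemma:psi_N}, by a telescoping insertion of $\psi_n$. First I would rewrite each summand as
\[
\sqrt{\lambda_n}\phi_n(x)\psi_n(y) - \sqrt{\lambda_n^{N}}\phi_n^{N}(x)\psi_n^{N}(y)
= \Big(\sqrt{\lambda_n}\phi_n(x)\psi_n(y) - \sqrt{\lambda_n^{N}}\phi_n^{N}(x)\psi_n(y)\Big)
+ \sqrt{\lambda_n^{N}}\phi_n^{N}(x)\big(\psi_n(y) - \psi_n^{N}(y)\big),
\]
so that, after summing over $1\leq n\leq M$ and applying the triangle inequality in $L^2(\Omega\times D)$, the error splits into precisely the quantity bounded in Proposition~\ref{prop:1stErr} plus a remainder carrying the factor $\psi_n-\psi_n^{N}$.

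The first piece is controlled directly by Proposition~\ref{prop:1stErr}, giving a bound of order $M^{1/2}N^{-1/2}$. For the second piece I would use the triangle inequality together with the separation of the $x$- and $y$-dependence in each summand to obtain
\[
\normL{\sum_{n=1}^{M}\sqrt{\lambda_n^{N}}\phi_n^{N}(x)\big(\psi_n(y)-\psi_n^{N}(y)\big)}{\Omega\times D}
\leq \sum_{n=1}^{M}\sqrt{\lambda_n^{N}}\,\normL{\phi_n^{N}}{D}\,\|\psi_n-\psi_n^{N}\|_{L^2(\Omega)}.
\]
Since each $\phi_n^{N}$ has unit $L^2(D)$-norm, the middle factor is one, and Lemma~\ref{lemma:psi_N} bounds $\|\psi_n-\psi_n^{N}\|_{L^2(\Omega)}$ by a quantity of order $(\lambda_n^{N})^{-1/2}N^{-1/2}$. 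The factors $\sqrt{\lambda_n^{N}}$ and $(\lambda_n^{N})^{-1/2}$ then cancel, leaving $N^{-1/2}$ per summand and hence a contribution of order $MN^{-1/2}$.

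Adding the two pieces and using $M^{1/2}\leq M$ for $M\geq 1$ yields the asserted bound $MN^{-1/2}$. I do not expect any genuine analytic obstacle: everything is assembled from the prior estimates, and the only point requiring care is choosing the decomposition so that the first piece lines up exactly with the expression appearing in Proposition~\ref{prop:1stErr}. I would note in passing that replacing the crude triangle inequality in the second piece by a Pythagoras argument—exploiting the $L^2(D)$-orthonormality of $\{\phi_n^{N}\}_{n=1}^{M}$, which makes the cross terms in the $x$-integration vanish—would sharpen that contribution to $M^{1/2}N^{-1/2}$, so the stated bound is not optimal but is more than adequate for the subsequent error analysis.
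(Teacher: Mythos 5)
Your proof is correct and follows essentially the same route as the paper: the same telescoping decomposition, with the new term $\sqrt{\lambda_n^{N}}\phi_n^{N}(\psi_n-\psi_n^{N})$ handled via Lemma \ref{lemma:psi_N} so that the factors $\sqrt{\lambda_n^{N}}$ and $(\lambda_n^{N})^{-1/2}$ cancel; invoking Proposition \ref{prop:1stErr} for the first two terms rather than re-expanding them is only a cosmetic shortcut. Your closing observation that the $L^2(D)$-orthonormality of $\{\phi_n^{N}\}$ would sharpen the last contribution to $M^{1/2}N^{-1/2}$ is also accurate.
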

\begin{proof}
The triangle inequality yields
 \begin{align*}
&\normL{\sum\limits_{n=1}^{M}\Big(\sqrt{\lambda_n}\phi_n(x)\psi_n(y)
-\sqrt{\lambda_n^{N}}\phi_n^{N}(x)\psi_n^{N}(y)\Big)} {\Omega\times D}\\
&=
\Big\|\sum\limits_{n=1}^{M}\Big((\sqrt{\lambda_n}-\sqrt{\lambda_n^N})\phi_n(x)\psi_n(y)
+\sqrt{\lambda_n^{N}}(\phi_n(x)-\phi_n^{N}(x))\psi_n(y)\\
&\hspace{6cm}+\sqrt{\lambda_n^{N}}\phi_n^{N}(x)(\psi_n(y)-\psi_n^{N}(y))\Big)\Big\|_ {L^2(\Omega\times D)}\\
&\lesssim \sum\limits_{n=1}^{M}\Big( |\sqrt{\lambda_n}-\sqrt{\lambda_n^N}| + \sqrt{\lambda_n^N}\normL{\phi_n^{N}-\phi_n}{D}+\sqrt{\lambda_n^N}\|\psi_n-\psi_n^N\|_{L^2(\Omega)}\Big).
\end{align*}
Then the desired result follows from \eqref{eq:6666}, \eqref{eq:eigenvectorN} and Lemma \ref{lemma:psi_N}.
\end{proof}

\section{Conforming Galerkin approximation estimate}\label{sec:galerkin}
In this section we derive an estimate for the third term in \eqref{TotalErr} by means of the
approximation theory of conforming finite element methods.
To this end, let
\begin{equation}\label{eq:notationErr}
\begin{aligned}
\mc{E}_{N,h}&:=\mc{R}_N-\mc{R}_{N,h}\\
 e_{n}^{N,h}&:=\phi_n^{N}-\phi_n^{N,h}\\
\Delta\lambda_{n}^{N,h}&:=\lambda_n^{N}-\lambda_n^{N,h}. 
\end{aligned}
\end{equation}
Then $\mc{E}_{N,h}$ is a self-adjoint
operator on $L^2(D)$ and we have the following error representation.
\begin{lemma}\label{lemma:errOper}
The error operator $\mc{E}_{N,h}$ has the property
\[
(\mc{E}_{N,h} v,v)=(v,(I-I_h)\mc{R}_{N}(I+I_h)v)\quad \text{ for all } \quad v\in L^2(D).
\]
\end{lemma}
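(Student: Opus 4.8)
The plan is to reduce the identity to a short algebraic computation in the $L^2(D)$ inner product, using only two structural facts: the $L^2$-projection $I_h$ is self-adjoint (indeed $I_h^{\ast}=I_h$, since it is an orthogonal projection), and the operator $\mc{R}_N$ is self-adjoint. I would begin by inserting the definitions $\mc{E}_{N,h}=\mc{R}_N-\mc{R}_{N,h}$ and $\mc{R}_{N,h}=I_h\mc{R}_N I_h$, so that for $v\in L^2(D)$ one has $(\mc{E}_{N,h}v,v)=(\mc{R}_N v,v)-(I_h\mc{R}_N I_h v,v)$.

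Next I would expand the right-hand side $(v,(I-I_h)\mc{R}_N(I+I_h)v)$ into its four constituent terms, namely $(v,\mc{R}_N v)+(v,\mc{R}_N I_h v)-(v,I_h\mc{R}_N v)-(v,I_h\mc{R}_N I_h v)$. The crux is to observe that the two cross terms cancel: by self-adjointness of $\mc{R}_N$ and of $I_h$, together with the symmetry of the (real) $L^2$ inner product, $(v,\mc{R}_N I_h v)=(\mc{R}_N v,I_h v)=(I_h\mc{R}_N v,v)=(v,I_h\mc{R}_N v)$, so the second and third terms coincide and cancel. What remains is $(v,\mc{R}_N v)-(v,I_h\mc{R}_N I_h v)$, which, again by symmetry of the inner product, equals $(\mc{R}_N v,v)-(I_h\mc{R}_N I_h v,v)=(\mc{E}_{N,h}v,v)$, establishing the claim.

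There is essentially no genuine obstacle here; the statement is a bookkeeping identity, and the only substantive point is the cancellation of the cross terms, which is precisely where both self-adjointness of $I_h$ and of $\mc{R}_N$ enter. It is worth noting that idempotency of $I_h$ (the relation $I_h^2=I_h$) is not actually invoked for this particular identity — only symmetry is used — so the representation is valid for any pair of self-adjoint operators in place of $I_h$ and $\mc{R}_N$. This form will be the useful one downstream, since it exposes the factor $(I-I_h)\mc{R}_N$, whose magnitude can then be controlled directly through the finite element approximation properties \eqref{eq:approxL2}--\eqref{eq:approxLinfty} of $I_h$ applied to the $H^s$-smoothing operator $\mc{R}_N$ from Proposition \ref{prop:recall}.
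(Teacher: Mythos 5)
Your proof is correct and follows essentially the same route as the paper's: both arguments rest on the single observation that $(\mc{R}_N I_h v - I_h\mc{R}_N v, v)=0$ by self-adjointness of $\mc{R}_N$ and $I_h$, with the paper adding this zero term to $(\mc{E}_{N,h}v,v)$ and you instead expanding the right-hand side and cancelling the same two cross terms. Your added remark that idempotency of $I_h$ is not needed is accurate but does not change the substance.
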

\begin{proof}
For given $v\in L^2(D)$ and since $\mc{R}_{N,h}=I_h\mc{R}_NI_h$ and $(\mc{R}_N I_h v-I_h\mc{R}_N v, v)=0$, we obtain
\begin{align*}
(\mc{E}_{N,h} v,v)&=((\mc{R}_N-I_h\mc{R}_NI_h)v,v)+(\mc{R}_N I_h v-I_h\mc{R}_N v, v)\\
&=((I-I_h)\mc{R}_N(I+I_h)v,v).
\end{align*}
\end{proof}

A direct consequence of Lemma \ref{lemma:errOper}, together with the approximation property \eqref{eq:approxL2} and Proposition \ref{prop:recall}, is the upper bound estimate for the operator norm of $\mc{E}_{N,h}$
\begin{align}\label{eq:E_{N,h}}
\norm{\mc{E}_{N,h}}_{\mc{B}(L^2(D))}\leq 2\CI h^{s}\norm{\mc{R}_N}_{\mc{B}(L^2(D), H^s(D))}.
\end{align}
Finally, we are ready to present the main result in this section.
\begin{proposition}[Conforming Galerkin approximation estimate]\label{prop:babuska}
Let Assumption \ref{A:11} hold and let $N\in \mathbb{N}_{+}$ be sufficiently large and $M\leq \lfloor  N^{\frac{1}{2s/d+1}}\rfloor$. Assume that the spectral gap condition \eqref{eq:spec_gap} is valid.
Then there are constants $C_1$, $C_2$ and $h_0\ll N^{-1/s}$ such that
\[
\Delta\lambda_{n}^{N,h}\leq C_1(\lambda_n^{N})^{-1}h^{2s} \quad\text{ for all }\quad 0<h\leq h_0\quad \text{ and } \quad n=1,\cdots, M.
\]
Furthermore, the eigenvectors $\{\phi^{N}_{n}\}_{n=1}^{N}$ can be selected such that
\[
\normL{e^{N,h}_{n}}{D}\leq C_{2}(\lambda_{n}^{N})^{-1} h^{s}\quad\text{ for all }\quad 0<h\leq h_0\quad \text{ and } \quad n=1,\cdots, M.
\]
Here, the constants $C_1$ and $C_2$ are independent of $h$ and $N$ and $h_0>0$ is sufficiently small.
\end{proposition}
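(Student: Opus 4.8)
The plan is to estimate the two quantities separately, using the standard spectral approximation theory for self-adjoint compact operators (in the spirit of Babu\v{s}ka--Osborn) applied to the pair $\mc{R}_N$ and $\mc{R}_{N,h}=I_h\mc{R}_N I_h$, and then to feed in the operator-norm bound \eqref{eq:E_{N,h}} together with the spectral bounds of Proposition \ref{prop:recall}. First I would observe that, by Lemma \ref{lemma:errOper} and the approximation property \eqref{eq:approxL2}, the perturbation operator $\mc{E}_{N,h}$ has operator norm controlled by $2\CI h^s\norm{\mc{R}_N}_{\mc{B}(L^2(D),H^s(D))}$, which by \eqref{eq:operNnorm} is bounded independently of $N$ and $h$. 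This is the single driving quantity for both estimates.

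For the eigenvalue estimate, the natural route is the Rayleigh-quotient/min-max characterization. Writing $\Delta\lambda_n^{N,h}=(\mc{R}_N\phi_n^{N},\phi_n^{N})-(\mc{R}_{N,h}\phi_n^{N,h},\phi_n^{N,h})$ and using Lemma \ref{lemma:errOper}, I would bound the eigenvalue gap by a term of the form $\|(I-I_h)\phi_n^{N}\|_{L^2(D)}$ paired with $\|(I-I_h)\mc{R}_N\phi_n^{N}\|_{L^2(D)}$. Both factors are estimated by \eqref{eq:approxL2}: each contributes a factor $h^s$ and a factor $\normHp{\phi_n^{N}}{s}{D}$, and by \eqref{eq:555} the latter is bounded by $(\lambda_n^{N})^{-1}\norm{\mc{R}_N}_{\mc{B}(L^2(D),H^s(D))}$. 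Collecting these yields $\Delta\lambda_n^{N,h}\lesssim(\lambda_n^{N})^{-1}h^{2s}$, which is the first claim with a constant $C_1$ depending only on $\CI$, $D$, $d$, $s$ and the (uniformly bounded) operator norm.

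For the eigenvector estimate I would use the abstract spectral perturbation bound: for a simple eigenvalue (or after diagonalizing within an eigenspace of fixed multiplicity, as in \cite[Theorem 9.1]{babuska&osborn91}), the eigenprojection error is controlled by $\|\mc{E}_{N,h}\|_{\mc{B}(L^2(D))}$ divided by the distance from $\lambda_n^{N}$ to the rest of the spectrum of $\mc{R}_{N}$. Here I would split $e_n^{N,h}$ into its component in the range of the spectral projection and its orthogonal complement, estimate the orthogonal part by $\|\mc{E}_{N,h}\phi_n^{N}\|_{L^2(D)}/\mathrm{dist}$, and bound $\|\mc{E}_{N,h}\phi_n^{N}\|_{L^2(D)}$ again through Lemma \ref{lemma:errOper} and \eqref{eq:approxL2}, giving a single power $h^s$ and the factor $(\lambda_n^{N})^{-1}$ from \eqref{eq:555}. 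This produces $\normL{e_n^{N,h}}{D}\lesssim(\lambda_n^{N})^{-1}h^s$, after freedom to choose the eigenvectors within each eigenspace.

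The main obstacle is the role of the spectral gap and the threshold $h_0\ll N^{-1/s}$. The eigenvector bound requires a uniform lower bound on the separation of $\lambda_n^{N}$ from neighboring discrete eigenvalues for all $n\le M$; this is exactly where assumption \eqref{eq:spec_gap} (transferred from $\mc{R}$ to $\mc{R}_N$ via the $\mathcal{O}(N^{-1})$ estimate \eqref{eq:6666}) enters, and one must verify that the perturbation $\|\mc{E}_{N,h}\|_{\mc{B}(L^2(D))}\lesssim h^s$ is small enough not to close that gap. Since by \eqref{eq:specN} we have $\lambda_n^{N}\gtrsim M^{-2s/d-1}\approx N^{-1}$ for $n\le M$, requiring $h^s\ll\lambda_n^{N}$ forces $h^s\ll N^{-1}$, hence $h_0\ll N^{-1/s}$; this is precisely the balancing restriction announced before the statement. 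I would therefore carry out the gap bookkeeping carefully to confirm that $C_1$, $C_2$ remain independent of $h$ and $N$ once $h\le h_0$.
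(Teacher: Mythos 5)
Your proposal is correct and follows essentially the same route as the paper, which simply invokes \cite[Theorem 9.1]{babuska&osborn91} together with the perturbation bound \eqref{eq:E_{N,h}}, the eigenfunction regularity \eqref{eq:555} from Proposition \ref{prop:recall}, and the spectral-gap bookkeeping that forces $h_0\ll N^{-1/s}$; your identification of $\lambda_n^{N}\gtrsim N^{-1}$ for $n\le M$ as the source of that threshold is exactly the intended mechanism. One small bookkeeping caution: in the eigenvalue step the second factor should be bounded as $\|(I-I_h)\mc{R}_N\phi_n^{N}\|_{L^2(D)}\leq \CI h^{s}\norm{\mc{R}_N}_{\mc{B}(L^2(D),H^s(D))}$ (using $\normL{\phi_n^N}{D}=1$, or equivalently $\mc{R}_N\phi_n^N=\lambda_n^N\phi_n^N$ so that one power of $\lambda_n^N$ cancels), not by $h^s\normHp{\phi_n^{N}}{s}{D}$ as written, since taking a factor $(\lambda_n^{N})^{-1}$ from \emph{both} factors would give only $(\lambda_n^{N})^{-2}h^{2s}$ rather than the claimed $(\lambda_n^{N})^{-1}h^{2s}$.
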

\begin{proof}
The proof follows from \cite[Theorem 9.1]{babuska&osborn91}, where the following identity plays a crucial role. We have
\begin{align*}
\lambda_n^N-(\mc{R}_N v,v)=\lambda_n^{N}(v-\phi_n^N,v-\phi_n^N)-(\mc{R}_N(v-\phi_n^N),v-\phi_n^N)
\end{align*}
 for all $v\in L^2(D)$ satisfying $(v,v)=1$.
This identity can be derived by definition directly. Together with Proposition \ref{prop:recall} and the estimate \eqref{eq:E_{N,h}}, this completes the proof.
\end{proof}
Recall that if we want to approximate a certain realization of the random field $\log\kappa(y,\cdot)$ for some $y\in \Omega$, then we have to estimate the error between
${\psi}_n^{N}$ and $\psi_n^{N,h}$. To this end, we make the following assumption.
\begin{assumption}\label{A:33}
Let ${\psi}_n^{N,h}$ and $\tilde{\psi}_n^{N,h}$ be as defined in \eqref{eq:psireal} and \eqref{eq:psiIdeal}, respectively. Assume that, for some $N_0$ sufficiently large and a sufficiently small $h_0\ll N_{0}^{-1/s}$, there holds
\[
\|\tilde{\psi}_n^{N,h}-\psi_n^{N,h}\|_{L^2(\Omega)}\lesssim N^{-1}+h^{s}\quad \text{ for all } N>N_0 \text{ and } h<h_0.
\]
\end{assumption}

\noindent Note that Assumption \ref{A:33} requires a certain regularity of the bivariate function $\log\kappa(y,x)\in L^2(\Omega\times D)$ over the stochastic domain $\Omega$ since the computation of $\tilde{\psi}_n^{N,h}$ involves the polynomial interpolation of $\psi_n^{N,h}$ over $\Omega$.

We then get the following result.
\begin{lemma}\label{lem: psiEst}
Let Assumption \ref{A:33} be valid. Let ${\psi}_n^{N}$ and $\psi_n^{N,h}$ be as defined in \eqref{eq:psi_N} and \eqref{eq:psireal}, respectively. Then, for some $N_0$ sufficiently large and a sufficiently small $h_0\ll N_{0}^{-1/s}$, there holds
\[
\|{\psi}_n^{N}-\psi_n^{N,h}\|_{L^2(\Omega)}\lesssim N^{-1}+n^{\frac{3s}{d}+\frac{3}{2}}h^{s}\quad \text{ for all } N>N_0 \text{ and } h<h_0.
\]
\end{lemma}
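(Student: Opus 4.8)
The plan is to estimate $\|\psi_n^N - \psi_n^{N,h}\|_{L^2(\Omega)}$ by inserting the intermediate quantity $\tilde{\psi}_n^{N,h}$ and applying the triangle inequality,
\[
\|\psi_n^N - \psi_n^{N,h}\|_{L^2(\Omega)}
\leq \|\psi_n^N - \tilde{\psi}_n^{N,h}\|_{L^2(\Omega)}
 + \|\tilde{\psi}_n^{N,h} - \psi_n^{N,h}\|_{L^2(\Omega)}.
\]
The second term is controlled by Assumption \ref{A:33}, which directly gives the bound $N^{-1}+h^s$. The whole task thus reduces to estimating the first term, the difference between the exact-sampling realization $\psi_n^N$ built from the continuous eigenpair $(\lambda_n^N,\phi_n^N)$ and the quantity $\tilde{\psi}_n^{N,h}$ built from the Galerkin eigenpair $(\lambda_n^{N,h},\phi_n^{N,h})$.

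For this first term I would mimic the decomposition already used in the proof of Lemma \ref{lemma:psi_N}: subtracting the defining integrals \eqref{eq:psi_N} and \eqref{eq:psiIdeal} and splitting off the normalization factor, one gets
\[
\psi_n^{N}(y)-\tilde{\psi}_n^{N,h}(y)
=\Big(\tfrac{1}{\sqrt{\lambda_n^{N}}}-\tfrac{1}{\sqrt{\lambda_n^{N,h}}}\Big)\!\int_{D}\log\kappa(y,x)\phi_n^{N}(x)\,\mathrm{d}x
+\tfrac{1}{\sqrt{\lambda_n^{N,h}}}\!\int_{D}\log\kappa(y,x)\big(\phi_n^{N}(x)-\phi_n^{N,h}(x)\big)\,\mathrm{d}x.
\]
Taking the $L^2(\Omega)$-norm and bounding each integral by Cauchy-Schwarz in $x$ together with $\|\log\kappa\|_{L^2(\Omega\times D)}<\infty$, the first summand is controlled by $|1-\sqrt{\lambda_n^N/\lambda_n^{N,h}}|$ and the second by $(\lambda_n^{N,h})^{-1/2}\|e_n^{N,h}\|_{L^2(D)}$, exactly parallel to the estimate in Lemma \ref{lemma:psi_N}. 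The two ingredients are then the eigenvalue gap $\Delta\lambda_n^{N,h}$ and the eigenvector error $\|e_n^{N,h}\|_{L^2(D)}$, both supplied by Proposition \ref{prop:babuska}.

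The key is to track the $n$-dependent constants carefully, since the final bound $n^{3s/d+3/2}h^s$ is not uniform in $n$. From Proposition \ref{prop:babuska} we have $\Delta\lambda_n^{N,h}\lesssim (\lambda_n^N)^{-1}h^{2s}$ and $\|e_n^{N,h}\|_{L^2(D)}\lesssim (\lambda_n^N)^{-1}h^s$, while Proposition \ref{prop:recall} gives the lower estimate $\lambda_n^N\gtrsim n^{-2s/d-1}$ (for $n\le M$, under the spectral gap, the $n^{-2s/d-1}$ branch dominates) and consequently $(\lambda_n^N)^{-1}\lesssim n^{2s/d+1}$. Using $|1-\sqrt{\lambda_n^N/\lambda_n^{N,h}}|\lesssim \Delta\lambda_n^{N,h}/\lambda_n^{N,h}$, and noting $\lambda_n^{N,h}\approx\lambda_n^N$ for $h$ small, the normalization term behaves like $(\lambda_n^N)^{-2}h^{2s}\lesssim n^{4s/d+2}h^{2s}$, and the eigenvector term like $(\lambda_n^N)^{-3/2}h^s\lesssim n^{3s/d+3/2}h^s$. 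The main obstacle is precisely this bookkeeping: one must verify that the dominant power of $n$ comes from the eigenvector term $n^{3s/d+3/2}h^s$, which requires observing that for $h<h_0\ll N^{-1/s}$ the term $n^{4s/d+2}h^{2s}$ is of higher order in $h$ and therefore absorbed, so that the final bound $N^{-1}+n^{3s/d+3/2}h^s$ emerges after combining with the Assumption \ref{A:33} contribution.
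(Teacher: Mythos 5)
Your proposal is correct and follows essentially the same route as the paper: the same triangle inequality through $\tilde{\psi}_n^{N,h}$, Assumption \ref{A:33} for the interpolation part, and the same splitting of $\psi_n^N-\tilde{\psi}_n^{N,h}$ into a normalization-factor term and an eigenvector term, both controlled by Proposition \ref{prop:babuska}. The only cosmetic difference is that the paper bounds the normalization term via $|\sqrt{a}-\sqrt{b}|\leq\sqrt{|a-b|}$, giving $(\lambda_n^N)^{-1}h^s\lesssim n^{2s/d+1}h^s$, whereas you use $\Delta\lambda_n^{N,h}/\lambda_n^{N,h}\lesssim n^{4s/d+2}h^{2s}$; both are dominated by the eigenvector contribution $n^{3s/d+3/2}h^s$ under the standing restrictions $n\leq M$ and $h\ll N^{-1/s}$.
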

\begin{proof}
An application of the triangle inequality leads to
\begin{align*}
\|\psi_n^{N}-\psi_n^{N,h}\|_{L^2(\Omega)}\leq \|\psi_n^{N}-\tilde{\psi}_n^{N,h}\|_{L^2(\Omega)}+\|\tilde{\psi}_n^{N,h}-\psi_n^{N,h}\|_{L^2(\Omega)}.
\end{align*}
The second term can be estimated by Assumption \ref{A:33}. The definitions \eqref{eq:psi} and \eqref{eq:psiIdeal} imply
\begin{align*}
\psi_n^{N}(y)&-\tilde{\psi}_n^{N,h}(y)=\frac{1}{\sqrt{\lambda_n^N}}\int_{D}\log\kappa(y,x)\phi_n^{N}(x)\mathrm{d}x-\frac{1}{\sqrt{\lambda_n^{N,h}}}\int_{D}\log\kappa(y,x)\phi_n^{N,h}(x)\mathrm{d}x\\
&=\Big(\frac{1}{\sqrt{\lambda_n^{N}}}-\frac{1}{\sqrt{\lambda_n^{N,h}}}\Big)\int_{D}\log\kappa(y,x)\phi_n^{N}(x)\mathrm{d}x+\frac{1}{\sqrt{\lambda_n^{N,h}}}\int_{D}\log\kappa(y,x)(\phi_n^{N}(x)-\phi_n^{N,h}(x))\mathrm{d}x\\
&=\Big(1-\sqrt{\frac{{\lambda_n^{N}}}{{\lambda_n^{N,h}}}}\Big)\psi_n^N+\frac{1}{\sqrt{\lambda_n^{N,h}}}\int_{D}\log\kappa(y,x)(\phi_n^{N}(x)-\phi_n^{N,h}(x))\mathrm{d}x.
\end{align*}
Taking the $L^2(\Omega)$-norm on both sides leads to
\begin{align*}
\|\psi_n^{N}-{\psi}_n^{N,h}\|_{L^2(\Omega)}&\leq\Bigg|\frac{\sqrt{\lambda_n^{N}}-\sqrt{\lambda_n^{N,h}}}{\sqrt{\lambda_n^{N,h}}}\Bigg|\normL{\psi_n^N}{\Omega}+\frac{1}{\sqrt{\lambda_n^{N,h}}}\normL{\log\kappa}{\Omega\times D}\normL{e_{n}^{N,h}}{D}\\
&\leq \frac{1}{\sqrt{\lambda_n^{N,h}}}\Big( \sqrt{\Delta\lambda_{n}^{N,h}} \normL{\psi_n^N}{\Omega}+\normL{\log\kappa}{\Omega\times D}\normL{e_{n}^{N,h}}{D} \Big).
\end{align*}
Then the desired result follows from Lemma \ref{lemma:orthognal} and Proposition \ref{prop:babuska}.
\end{proof}

\section{Main estimates}\label{sec:main}
In this section, we present the main estimate of the error between the lognormal random field $\kappa$ and its $M$-term numerical approximation
$\kappa_{M}^{N,h}$ in the $L^p(\Omega,L^2(D))$-norm and in the $L^p(\Omega, C(D))$-norm for $p<2$. The overall procedure is as follows: We first derive in Sections
\ref{subsec:l2} and \ref{subsec:linfty} an estimate on $|\log\kappa-\log\kappa_{M}^{N,h}| $ with respect to the $L^2(\Omega\times D)$-norm and the $L^2(\Omega, C(D))$-norm, cf. Theorems \ref{thm:FinalL2} and \ref{thm:FinalL00}. Then we establish the final results on
$|\kappa-\kappa_{M}^{N,h}|$ in Theorem \ref{thm:FinalExp} by employing Fernique's Theorem.

\subsection{$L^2$ error estimate}\label{subsec:l2}

First, we give an estimate for the third term in \eqref{TotalErr}.
\begin{proposition}[Galerkin approximation estimate in \eqref{TotalErr}]\label{prop:2ndErr}
Let Assumption \ref{A:11} hold, let $N\in \mathbb{N}_{+}$ be sufficiently large and let $M\leq \lfloor  N^{\frac{1}{2s/d+1}}\rfloor$ and $h\ll N^{-1/s}$. Assume the spectral gap condition \eqref{eq:spec_gap} to be valid. Then there holds
\begin{align*}
&\normL{\sum\limits_{n=1}^{M}\Big(\sqrt{\lambda_n^{N}}\phi_n^{N}\psi_n
-\sqrt{\lambda_n^{N,h}}\phi_n^{N,h}\psi_n\Big)} {\Omega\times D}\lesssim M^{\frac{s}{d}+\frac{3}{2}}h^{s}.
\end{align*}
\end{proposition}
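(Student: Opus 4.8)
The plan is to mirror the proof of Proposition~\ref{prop:1stErr}, now replacing the pair $(\lambda_n,\phi_n)$ by $(\lambda_n^{N},\phi_n^{N})$ and the pair $(\lambda_n^{N},\phi_n^{N})$ by $(\lambda_n^{N,h},\phi_n^{N,h})$, and feeding in the Galerkin estimates of Proposition~\ref{prop:babuska} in place of the sampling estimates. First I would add and subtract $\sqrt{\lambda_n^{N,h}}\,\phi_n^{N}\psi_n$ inside each summand and split it as
\[
\sqrt{\lambda_n^{N}}\phi_n^{N}\psi_n-\sqrt{\lambda_n^{N,h}}\phi_n^{N,h}\psi_n
=\big(\sqrt{\lambda_n^{N}}-\sqrt{\lambda_n^{N,h}}\big)\phi_n^{N}\psi_n
+\sqrt{\lambda_n^{N,h}}\,e_n^{N,h}\psi_n,
\]
with $e_n^{N,h}=\phi_n^{N}-\phi_n^{N,h}$ as in \eqref{eq:notationErr}. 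Since the variables separate, $\|a_n(x)\psi_n(y)\|_{L^2(\Omega\times D)}=\|a_n\|_{L^2(D)}\|\psi_n\|_{L^2(\Omega)}=\|a_n\|_{L^2(D)}$ (recall $\|\psi_n\|_{L^2(\Omega)}=1$), so the triangle inequality reduces the whole estimate to controlling $\sum_{n=1}^{M}\|a_n\|_{L^2(D)}$, where $a_n$ denotes the right-hand side above.

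Next I would bound each mode. Using $\|\phi_n^{N}\|_{L^2(D)}=1$, the elementary inequality $|\sqrt a-\sqrt b|\le\sqrt{|a-b|}$, and the eigenvalue estimate $\Delta\lambda_n^{N,h}\le C_1(\lambda_n^{N})^{-1}h^{2s}$ from Proposition~\ref{prop:babuska}, the first piece is at most $\sqrt{\Delta\lambda_n^{N,h}}\lesssim(\lambda_n^{N})^{-1/2}h^{s}$. For the second piece I would invoke $\lambda_n^{N,h}\le\lambda_n^{N}$ (the eigenvalues of the compression $I_h\mc{R}_N I_h$ lie below those of $\mc{R}_N$ by the Courant--Fischer min-max principle) together with the eigenvector estimate $\|e_n^{N,h}\|_{L^2(D)}\le C_2(\lambda_n^{N})^{-1}h^{s}$, so that $\sqrt{\lambda_n^{N,h}}\,\|e_n^{N,h}\|_{L^2(D)}\lesssim(\lambda_n^{N})^{-1/2}h^{s}$. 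Hence $\|a_n\|_{L^2(D)}\lesssim(\lambda_n^{N})^{-1/2}h^{s}$ for every $1\le n\le M$.

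The remaining task is to turn $(\lambda_n^{N})^{-1/2}$ into an explicit power of $n$ and sum. Under the spectral gap condition \eqref{eq:spec_gap}, the perturbation bound \eqref{eq:6666} combined with the decay $\lambda_n\approx C_{\ref{thm:truncationError}}\,n^{-2s/d-1}$ of Theorem~\ref{thm:truncationError} yields $\lambda_n^{N}\approx\lambda_n\approx n^{-2s/d-1}$ for $1\le n\le M$, so that $(\lambda_n^{N})^{-1/2}\lesssim n^{s/d+1/2}$. Summing the per-mode bound then gives
\[
\sum_{n=1}^{M}\|a_n\|_{L^2(D)}\lesssim h^{s}\sum_{n=1}^{M} n^{s/d+1/2}\lesssim h^{s}M^{s/d+3/2},
\]
which is the claim. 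I note in passing that exploiting the orthonormality of $\{\psi_n\}$ in $L^2(\Omega)$ to sum the squares $\sum_n\|a_n\|_{L^2(D)}^2$ rather than using the $\ell^1$ triangle inequality would sharpen this to $M^{s/d+1}h^{s}$; the stated bound already follows from the cruder summation.

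I expect the only genuinely delicate point to be the lower bound $\lambda_n^{N}\gtrsim n^{-2s/d-1}$ near $n=M$. There $\lambda_n$ is itself of order $N^{-1}$, exactly the size of the sampling perturbation controlled by \eqref{eq:6666}, so the relative error $N^{-1}/\lambda_n$ is $\mathcal{O}(1)$ rather than $o(1)$ at the top of the index range. Thus one cannot simply read off $\lambda_n^{N}\approx\lambda_n$ from the $\mathcal{O}(N^{-1})$ perturbation alone; it is precisely the spectral gap assumption \eqref{eq:spec_gap} that must be used to guarantee that $\lambda_n^{N}$ does not degenerate and remains comparable to $\lambda_n$ throughout $1\le n\le M$.
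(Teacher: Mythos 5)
Your proposal is correct and follows essentially the same route as the paper: the same splitting of each summand via $\pm\sqrt{\lambda_n^{N,h}}\phi_n^{N}\psi_n$, the same inputs from Proposition~\ref{prop:babuska} for $\Delta\lambda_n^{N,h}$ and $\normL{e_n^{N,h}}{D}$, and the same conversion $(\lambda_n^{N})^{-1/2}\lesssim n^{s/d+1/2}$ via Theorem~\ref{thm:truncationError} and \eqref{eq:6666} before summing to $M^{s/d+3/2}h^{s}$. Your closing remarks (the possible $\ell^2$ sharpening of the eigenvalue part and the delicacy of the lower bound $\lambda_n^{N}\gtrsim n^{-2s/d-1}$ at $n\approx M$, where the spectral gap condition \eqref{eq:spec_gap} is genuinely needed) are sound observations but do not change the argument.
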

\begin{proof}
Due to the orthogonality of the basis functions $\{\psi_n\}_{n=1}^{\infty}$ in $L^2(\Omega)$, an application of the triangle inequality leads to
\begin{align*}
&\normL{\sum\limits_{n=1}^{M}\Big(\sqrt{\lambda_n^{N}}\phi_n^{N}\psi_n
-\sqrt{\lambda_n^{N,h}}\phi_n^{N,h}\psi_n\Big)} {\Omega\times D}
=\normL{\sum\limits_{n=1}^{M}\Big(\sqrt{\lambda_n^{N}}\phi_n^{N}
-\sqrt{\lambda_n^{N,h}}\phi_n^{N,h}\Big)}{D}\\
&= \normL{\sum\limits_{n=1}^{M}\Big((\sqrt{\lambda_n^{N}}-\sqrt{\lambda_n^{N,h}})\phi_n^{N}
+\sqrt{\lambda_n^{N,h}}(\phi_n^{N}-\phi_n^{N,h})\Big)} {D}\\
&\leq \sqrt{\sum\limits_{n=1}^{M}\Delta\lambda_{n}^{N,h} }+\sum\limits_{n=1}^{M}\sqrt{\lambda_n^{N,h}}\normL{e_{n}^{N,h}}{D}.
\end{align*}
Here, we have used the orthogonality of $\{\phi_n^{N}\}_{n=1}^{M}$ over $L^2(D)$ in the last step.
Then, an application of Proposition \ref{prop:babuska} and Theorem \ref{thm:truncationError} gives the desired result.
\end{proof}

Now, using Theorem \ref{thm:truncationError}, Propositions \ref{prop:1stErr} and \ref{prop:2ndErr}, we are finally ready to present an estimate for \eqref{TotalErr}.
\begin{theorem}[Root mean square error for $M$-term KL expansion]\label{thm:FinalL2}
Let Assumption \ref{A:11} hold. Let $N\in \mathbb{N}_{+}$ be large, let $M\leq \lfloor  N^{\frac{1}{2s/d+1}}\rfloor$ and $h\ll N^{-1/s}$. Assume the spectral gap condition \eqref{eq:spec_gap} to be valid. Let $\log\kappa_{M}^{N,h}$ be given as in \eqref{eqn:KL_N}.
Then there holds
\begin{align*}
\normL{\log\kappa-\log \kappa_M^{N,h}}{\Omega\times D} \lesssim M^{-\frac{s}{d}}+\Big(\frac{M}{N}\Big)^{1/2}+M^{\frac{s}{d}+\frac{3}{2}}h^{s}.
\end{align*}
\end{theorem}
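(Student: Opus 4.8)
The plan is to apply the triangle-inequality decomposition \eqref{TotalErr}, which splits the total error $\normL{\log\kappa-\log \kappa_M^{N,h}}{\Omega\times D}$ into exactly three pieces: the truncation error, the sampling error, and the Galerkin approximation error. The entire strategy of this section has been arranged so that each of these three terms is controlled by a result proved earlier, so the proof of this theorem is essentially a bookkeeping step that collects those bounds.

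Concretely, I would proceed as follows. First, I would invoke the decomposition \eqref{TotalErr} to bound the left-hand side by the sum of its three constituent norms. Second, for the first term $\bigl\|\sum_{n>M}\sqrt{\lambda_n}\phi_n\psi_n\bigr\|_{L^2(\Omega\times D)}$, I would apply Theorem \ref{thm:truncationError}, which yields the bound $C_{\ref{thm:truncationError}}^{1/2}\sqrt{d/(2s)}\,(M+1)^{-s/d}\lesssim M^{-s/d}$. Third, for the second term, which is the sampling error $\bigl\|\sum_{n=1}^{M}(\sqrt{\lambda_n}\phi_n-\sqrt{\lambda_n^{N}}\phi_n^{N})\psi_n\bigr\|_{L^2(\Omega\times D)}$, I would apply Proposition \ref{prop:1stErr} to obtain the bound $M^{1/2}N^{-1/2}=(M/N)^{1/2}$; note that the hypotheses of Proposition \ref{prop:1stErr} (namely $N$ large, $M\leq\lfloor N^{1/(2s/d+1)}\rfloor$, and the spectral gap condition \eqref{eq:spec_gap}) are all part of the present theorem's assumptions. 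Fourth, for the third term, the Galerkin approximation error $\bigl\|\sum_{n=1}^{M}(\sqrt{\lambda_n^{N}}\phi_n^{N}-\sqrt{\lambda_n^{N,h}}\phi_n^{N,h})\psi_n\bigr\|_{L^2(\Omega\times D)}$, I would apply Proposition \ref{prop:2ndErr} to get $M^{s/d+3/2}h^{s}$; here the additional hypothesis $h\ll N^{-1/s}$ is exactly what Proposition \ref{prop:2ndErr} (via Proposition \ref{prop:babuska}) requires so that $h\leq h_0$ holds. Summing the three bounds gives the claimed estimate.

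Honestly, there is no serious obstacle at the level of this theorem itself — all the genuine analytic work has already been done upstream. The only points requiring mild care are checking that the hypotheses line up: each of the three cited results carries the same constraints ($N$ sufficiently large, $M\leq\lfloor N^{1/(2s/d+1)}\rfloor$, the spectral gap \eqref{eq:spec_gap}, and for the third term $h\ll N^{-1/s}$), and the present theorem assumes all of them, so the citations apply verbatim. I would also remark that the three error contributions exhibit the expected competing behavior in $M$: increasing $M$ shrinks the truncation term $M^{-s/d}$ but inflates the sampling and Galerkin terms, which is precisely the trade-off the abstract advertises as enabling an optimal balancing of the parameters $M$, $N$, and $h$.

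The final step is simply to add the three estimates and absorb all constants into the $\lesssim$ notation, yielding
\[
\normL{\log\kappa-\log \kappa_M^{N,h}}{\Omega\times D} \lesssim M^{-\frac{s}{d}}+\Big(\frac{M}{N}\Big)^{1/2}+M^{\frac{s}{d}+\frac{3}{2}}h^{s},
\]
which completes the proof.
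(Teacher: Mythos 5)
Your proposal is correct and matches the paper's own argument exactly: the paper proves this theorem precisely by combining the decomposition \eqref{TotalErr} with Theorem \ref{thm:truncationError}, Proposition \ref{prop:1stErr} and Proposition \ref{prop:2ndErr}. Your verification that the hypotheses of each cited result are contained in the theorem's assumptions is the only content needed, and you have it.
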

\begin{remark}[Complexity]\label{rm:complex}
According to Theorem \ref{thm:FinalL2}, in order to approximate the Gaussian random field $\log\kappa$ by formula \eqref{eqn:KL_N} with root mean square error of $\epsilon$ for a certain threshold accuracy $\epsilon>0$, we need to take the number of sampling points $N:=\mathcal{O}(\epsilon^{-2-d/s})$, the number of truncation terms $M:=\mathcal{O}(\epsilon^{-d/s})$ and the mesh size $h:=\mathcal{O}(\epsilon^{2/s+3d/2s^2})$.
\end{remark}
To numerically approximate the realization of $\log\kappa(y,\cdot)$ for given $y\in \Omega$ by the $M$-term truncation formula \eqref{eqn:KL_N}, we can replace the i.i.d normal random functions $\{\psi_n(y)\}_{n=1}^{M}$ with $\{\psi_n^{N,h}(y)\}_{n=1}^{M}$ defined in \eqref{eq:psireal}. The error in this process can be estimated as follows.
\begin{theorem}[Root mean square error for $M$-term KL expansion of a bivariate function]\label{thm:FinalL2bivariate}
Let Assumptions \ref{A:11} and \ref{A:33} hold. Let $N\in \mathbb{N}_{+}$ be large, let $M\leq \lfloor  N^{\frac{1}{2s/d+1}}\rfloor$ and $h\ll N^{-1/s}$. Assume the spectral gap condition \eqref{eq:spec_gap} to be valid.  Let
\begin{align}\label{eq:bivariate}
\log\kappa_M^{N,h}(y,x)=\sum\limits_{n=1}^{M}
\sqrt{\lambda_n^{N,h}}\phi_n^{N,h}(x)\psi_n^{N,h}(y).
\end{align}
Then there holds
\begin{align*}
\normL{{\log\kappa(y,x)-\log \kappa_M^{N,h}(y,x)}}{\Omega\times D} \lesssim M^{-\frac{s}{d}}
+MN^{-1/2}
+M^{\frac{2s}{d}+\frac{3}{2}}h^{s}.
\end{align*}
\end{theorem}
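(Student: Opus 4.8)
The plan is to mirror the three-term splitting used in the proof of Theorem \ref{thm:FinalL2}, but now carrying the fully discrete random functions $\psi_n^{N,h}$ all the way through instead of the exact $\psi_n$. Writing $\log\kappa=\sum_{n=1}^{\infty}\sqrt{\lambda_n}\phi_n\psi_n$ and inserting the intermediate expression built from the sampled quantities $(\lambda_n^{N},\phi_n^{N},\psi_n^{N})$, the triangle inequality gives
\begin{align*}
\normL{\log\kappa-\log\kappa_M^{N,h}}{\Omega\times D}
&\leq \Big\|\sum_{n>M}\sqrt{\lambda_n}\phi_n\psi_n\Big\|_{L^2(\Omega\times D)}\\
&\quad+\Big\|\sum_{n=1}^M\big(\sqrt{\lambda_n}\phi_n\psi_n-\sqrt{\lambda_n^{N}}\phi_n^{N}\psi_n^{N}\big)\Big\|_{L^2(\Omega\times D)}\\
&\quad+\Big\|\sum_{n=1}^M\big(\sqrt{\lambda_n^{N}}\phi_n^{N}\psi_n^{N}-\sqrt{\lambda_n^{N,h}}\phi_n^{N,h}\psi_n^{N,h}\big)\Big\|_{L^2(\Omega\times D)}
=:I+II+III.
\end{align*}
Term $I$ is the pure truncation error, bounded by $M^{-s/d}$ via Theorem \ref{thm:truncationError}, and term $II$ is exactly the quantity estimated in Proposition \ref{prop:1stErrbivariate}, yielding $II\lesssim MN^{-1/2}$. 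Hence the entire content of the proof is the fully discrete term $III$, for which the goal is $III\lesssim M^{2s/d+3/2}h^{s}$.

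For $III$ I would split each summand into a spatial-eigenpair part and a stochastic part,
\[
\sqrt{\lambda_n^{N}}\phi_n^{N}\psi_n^{N}-\sqrt{\lambda_n^{N,h}}\phi_n^{N,h}\psi_n^{N,h}
=\big(\sqrt{\lambda_n^{N}}\phi_n^{N}-\sqrt{\lambda_n^{N,h}}\phi_n^{N,h}\big)\psi_n^{N}
+\sqrt{\lambda_n^{N,h}}\phi_n^{N,h}\big(\psi_n^{N}-\psi_n^{N,h}\big),
\]
and denote by $III_a$ and $III_b$ the sums over $n=1,\dots,M$ of the first and second contributions. For $III_a$ I would apply the plain triangle inequality. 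Using the tensor-product structure, $\|\psi_n^{N}\|_{L^2(\Omega)}\lesssim1$ for $n\leq M$ (which follows from Lemma \ref{lemma:psi_N} and $\|\psi_n\|_{L^2(\Omega)}=1$, since $(\lambda_n^{N})^{-1/2}N^{-1/2}\lesssim1$ in this range) and $\|\phi_n^{N}\|_{L^2(D)}=1$, I would bound $|\sqrt{\lambda_n^{N}}-\sqrt{\lambda_n^{N,h}}|\leq(\Delta\lambda_n^{N,h})^{1/2}$ and invoke Proposition \ref{prop:babuska} together with the spectral estimate $\lambda_n^{N}\approx n^{-2s/d-1}$ from \eqref{eq:specN}. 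This gives a per-term bound of order $n^{s/d+1/2}h^{s}$, so that $III_a\lesssim\sum_{n=1}^{M}n^{s/d+1/2}h^{s}\lesssim M^{s/d+3/2}h^{s}$, which is subdominant.

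The decisive step is $III_b$, and here a naive triangle inequality is too lossy: it produces $\sum_{n=1}^{M}\sqrt{\lambda_n^{N,h}}\,\|\psi_n^{N}-\psi_n^{N,h}\|_{L^2(\Omega)}\lesssim\sum_{n=1}^{M}n^{2s/d+1}h^{s}\approx M^{2s/d+2}h^{s}$, which is off by a factor $M^{1/2}$. To recover the sharp exponent I would instead exploit the $L^2(D)$-orthonormality of the finite-element eigenfunctions $\{\phi_n^{N,h}\}_{n=1}^{M}$: for fixed $y$ the function $x\mapsto\sum_{n=1}^{M}\sqrt{\lambda_n^{N,h}}\big(\psi_n^{N}(y)-\psi_n^{N,h}(y)\big)\phi_n^{N,h}(x)$ is an orthogonal expansion, so Parseval followed by integration in $y$ gives
\[
III_b^{2}=\sum_{n=1}^{M}\lambda_n^{N,h}\,\|\psi_n^{N}-\psi_n^{N,h}\|_{L^2(\Omega)}^{2}.
\]
Feeding in Lemma \ref{lem: psiEst}, namely $\|\psi_n^{N}-\psi_n^{N,h}\|_{L^2(\Omega)}\lesssim N^{-1}+n^{3s/d+3/2}h^{s}$, together with $\lambda_n^{N,h}\approx n^{-2s/d-1}$, the $N^{-1}$ contribution is summable and of order $N^{-1}\leq MN^{-1/2}$, while the leading part is $\sum_{n=1}^{M}n^{-2s/d-1}\,n^{6s/d+3}h^{2s}=\sum_{n=1}^{M}n^{4s/d+2}h^{2s}\approx M^{4s/d+3}h^{2s}$, whence $III_b\lesssim M^{2s/d+3/2}h^{s}$. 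Combining $I$, $II$, $III_a$, $III_b$ and absorbing the subdominant powers ($M^{s/d+3/2}\leq M^{2s/d+3/2}$ and $N^{-1}\leq MN^{-1/2}$) yields the claimed estimate. The main obstacle is precisely this orthogonality argument in $III_b$: without using the orthonormality of $\{\phi_n^{N,h}\}$ one loses an $M^{1/2}$ factor and obtains only the weaker bound $M^{2s/d+2}h^{s}$.
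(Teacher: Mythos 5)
Your proposal is correct and follows essentially the same route as the paper: the same three-term triangle-inequality decomposition (truncation, sampled term handled by Proposition \ref{prop:1stErrbivariate}, fully discrete term), the same three-way splitting of the summand in the last term, and the same crucial use of the $L^2(D)$-orthonormality of $\{\phi_n^{N,h}\}$ to get the Parseval-type bound $\bigl(\sum_{n=1}^{M}\lambda_n^{N,h}\|\psi_n^{N}-\psi_n^{N,h}\|_{L^2(\Omega)}^{2}\bigr)^{1/2}$, fed by Lemma \ref{lem: psiEst} and Proposition \ref{prop:babuska}. The only cosmetic difference is that you bound the eigenvalue-difference piece by a plain triangle inequality (giving $M^{s/d+3/2}h^{s}$ instead of the paper's $M^{s/d+1}h^{s}$), which is subdominant either way.
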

\begin{proof}
An application of the triangle inequality together with the KL expansion \eqref{eq:KL} implies
\begin{align*}
\normL{{\log\kappa-\log \kappa_M^{N,h}}}{\Omega\times D}&\leq
\normL{\sum\limits_{n=1}^{M}\Big(\sqrt{\lambda_n}\phi_n(x)\psi_n(y)
-\sqrt{\lambda_n^{N}}\phi_n^{N}(x)\psi_n^{N}(y)\Big)} {\Omega\times D}\\
&+
\normL{\sum\limits_{n=1}^{M}\Big(\sqrt{\lambda_n^{N}}\phi_n^{N}(x)\psi_n^{N}(y)
-\sqrt{\lambda_n^{N,h}}\phi_n^{N,h}(x)\psi_n^{N,h}(y)\Big)} {\Omega\times D}\\
&+\normL{\sum\limits_{n>M}\sqrt{\lambda_n}\phi_n(x)\psi_n(y)} {\Omega\times D}.
\end{align*}
Now, the first term and the third term above can be bounded by Proposition \ref{prop:1stErrbivariate} and
Theorem \ref{thm:truncationError}, respectively. We only need to estimate the second term.
The triangle inequality gives
\begin{align*}
&\normL{\sum\limits_{n=1}^{M}\Big(\sqrt{\lambda_n^{N}}\phi_n^{N}\psi_n^{N}
-\sqrt{\lambda_n^{N,h}}\phi_n^{N,h}\psi_n^{N,h}\Big)} {\Omega\times D}\\
&= \normL{\sum\limits_{n=1}^{M}\Big((\sqrt{\lambda_n^{N}}-\sqrt{\lambda_n^{N,h}})\phi_n^{N}\psi_n^{N}
+\sqrt{\lambda_n^{N,h}}(\phi_n^{N}-\phi_n^{N,h})\psi_n^{N}+\sqrt{\lambda_n^{N,h}}\phi_n^{N,h}(\psi_n^{N}-\psi_n^{N,h})\Big)} {\Omega\times D}\\
&\leq \sqrt{\sum\limits_{n=1}^{M}\Delta\lambda_{n}^{N,h} \|\psi_n^{N}\|_{L^2(\Omega)}^2}+\sum\limits_{n=1}^{M}\sqrt{\lambda_n^{N,h}}\normL{e_{n}^{N,h}}{D}\|\psi_n^{N}\|_{L^2(\Omega)}
+\sqrt{\sum\limits_{n=1}^{M}\lambda_n^{N,h}\|\psi_n^{N}-\psi_n^{N,h}\|_{L^2(\Omega)}^2}.
\end{align*}
By Lemma \ref{lemma:orthognal} and \eqref{eq:OperApproxN},
$
\|\psi_n^{N}\|_{L^2(\Omega)}\lesssim 1$ for all $1\leq n\leq M.
$
Then, Proposition \ref{prop:babuska}, Lemma \ref{lem: psiEst} and Theorem \ref{thm:truncationError} together show the desired result.
\end{proof}
\begin{remark}[Complexity]
According to Theorem \ref{thm:FinalL2bivariate}, in order to approximate a specific realization of the Gaussian random field $\log\kappa$ by formula \eqref{eq:bivariate} with root mean square error for a certain threshold accuracy $\epsilon>0$, we need to choose the number of sampling points $N:=\mathcal{O}(\epsilon^{-2-2d/s})$, the number of truncation terms $M:=\mathcal{O}(\epsilon^{-d/s})$ and the mesh size $h:=\mathcal{O}(\epsilon^{3/s+3d/2s^2})$.
\end{remark}
\subsection{Uniform error estimate}\label{subsec:linfty}
In order to derive a uniform error estimate of the Gaussian random field $\log\kappa(y,x)$, we require a further regularity assumption on $\log\kappa$ to guarantee that $\log\kappa\in L^2(\Omega,C(D))$. To this end, we make the following assumption.
\begin{assumption}[Regularity of $\log\kappa(y,x)$]\label{A:22}
 Let Assumption \ref{A:11} hold. Furthermore, assume $s>d/2$.
\end{assumption}
\noindent Then, the following estimate is valid.
\begin{theorem}[Uniform estimate on the eigenfunctions]\label{the:uniformB}
Let Assumption \ref{A:22} be satisfied. Then there holds
\begin{align}\label{eq:infty1}
\|\phi_n\|_{C(D)}\leq C(D,d, s) n^{\frac{1}{2}}.
\end{align}
\end{theorem}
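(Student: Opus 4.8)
The idea is to interpolate the two bounds already available for $\phi_n$: from the eigenfunction regularity lemma (taken with $\theta=1$) we know $\normHp{\phi_n}{s}{D}\leq C(D,d,s)\,n^{\frac{s}{d}}$ for $n$ large, while $L^2(D)$-orthonormality gives $\normL{\phi_n}{D}=1$. Under Assumption \ref{A:22} we have $s>d/2$, so $H^s(D)\hookrightarrow C(D)$ and one could crudely bound $\normI{\phi_n}{D}\lesssim\normHp{\phi_n}{s}{D}\lesssim n^{\frac{s}{d}}$; but since $s/d>1/2$ this is far weaker than the claimed $n^{1/2}$. To recover the sharp power the plan is to exploit the normalization $\normL{\phi_n}{D}=1$ through a multiplicative (Gagliardo--Nirenberg type) interpolation inequality rather than the plain embedding.

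Concretely, I would establish and apply, for $s>d/2$, the inequality
\[
\normI{u}{D}\leq C(D,d,s)\,\normHp{u}{s}{D}^{\frac{d}{2s}}\,\normL{u}{D}^{1-\frac{d}{2s}}\qquad\text{for all } u\in H^s(D).
\]
The exponent $\frac{d}{2s}$ lies in $(0,1)$ precisely because $s>d/2$, and it is fixed by the scaling balance between the $L^\infty$, $L^2$ and $\dot H^s$ norms. On the bounded Lipschitz domain $D$ I would prove it by composing a bounded Sobolev extension operator $E:H^s(D)\to H^s(\mathbb{R}^d)$ (available for strong local Lipschitz boundaries, and simultaneously bounded on $L^2$) with the corresponding estimate on $\mathbb{R}^d$. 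On $\mathbb{R}^d$ the inequality follows from a frequency-splitting argument: estimate the low frequencies $|\xi|\leq R$ by $\normL{u}{\mathbb{R}^d}$ and the high frequencies $|\xi|>R$ by $\normHp{u}{s}{\mathbb{R}^d}$ via Cauchy--Schwarz (using $s>d/2$ so that $(1+|\xi|^2)^{-s}$ is integrable), and then optimize over $R$. Alternatively one may simply invoke the interpolation inequality of \cite[Theorem 3.3]{Agmon65} already used in the eigenfunction regularity lemma.

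With this inequality in hand, substituting the two known bounds gives at once
\[
\normI{\phi_n}{D}\leq C\,\big(n^{\frac{s}{d}}\big)^{\frac{d}{2s}}\cdot 1^{\,1-\frac{d}{2s}}=C\,n^{\frac{1}{2}},
\]
which is exactly \eqref{eq:infty1}; the exponent $\frac{d}{2s}$ is precisely what collapses the powers of $n$ to $\tfrac12$. The only genuine obstacle is justifying the interpolation inequality on $D$ with this precise exponent. The naive two-step route---embed $H^{\theta s}(D)\hookrightarrow C(D)$ for some $\theta s>d/2$ and use the interpolated bound $\normHp{\phi_n}{\theta s}{D}\lesssim n^{\frac{\theta s}{d}}$---only yields $n^{1/2+\varepsilon}$ for every $\varepsilon>0$ and degenerates at the critical value $\theta s=d/2$, where $H^{d/2}(D)$ fails to embed into $C(D)$ and the embedding constant blows up. Hence the multiplicative form above, which remains valid up to the endpoint exponent, is what is essential to obtain the sharp factor $n^{1/2}$.
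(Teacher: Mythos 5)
Your proof is correct, and it actually takes a sharper route than the paper's own one-line argument. The paper proves \eqref{eq:infty1} by choosing $\theta$ with $\theta s>d/2$, invoking the bound $\normHp{\phi_n}{\theta s}{D}\lesssim n^{\frac{\theta s}{d}}$ from \eqref{eq:phi_theta}, and then applying the Sobolev embedding $H^{\theta s}(D)\hookrightarrow C(D)$. Read literally, that chain yields $\normI{\phi_n}{D}\lesssim n^{\frac{\theta s}{d}}$ with $\theta s$ strictly larger than $d/2$, i.e.\ only $n^{\frac12+\varepsilon}$ for every $\varepsilon>0$, with a constant that blows up as $\theta s\downarrow d/2$ --- exactly the degeneration you identify. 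Your replacement step, the multiplicative (Agmon/Gagliardo--Nirenberg) inequality
\begin{equation*}
\normI{u}{D}\leq C(D,d,s)\,\normHp{u}{s}{D}^{\frac{d}{2s}}\,\normL{u}{D}^{1-\frac{d}{2s}},\qquad s>d/2,
\end{equation*}
proved by extension to $\mathbb{R}^d$ plus frequency splitting and optimization over the cutoff $R$, is a standard and correct tool, and feeding in $\normHp{\phi_n}{s}{D}\lesssim n^{\frac{s}{d}}$ together with $\normL{\phi_n}{D}=1$ gives the stated exponent $n^{1/2}$ exactly, which is what the sharpness discussion following the theorem (the comparison with the $\mathcal{O}(n^{1/2})$ lower bound of Bachmayr--Cohen--Migliorati) requires. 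In short: both arguments interpolate between the $L^2$ normalization and the $H^s$ growth bound, but the paper interpolates within the Sobolev scale and then embeds, whereas you interpolate directly into $L^\infty$; your version is the one that delivers the claimed power without an $\varepsilon$-loss, so it can be regarded as a repair of, rather than a deviation from, the paper's proof.
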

\begin{proof}
Due to Assumption \ref{A:22}, an application of \eqref{eq:phi_theta} with $\theta s>d/2$ together with the Sobolev embedding implies the desired result.
\end{proof}
\begin{remark}[Optimality of the uniform estimate \eqref{eq:infty1}]
In \cite[Section 3]{Bachmayr.Cohen.Migliorati} the uniform estimate of eigenfunctions was studied under certain assumptions on the stationary covariance kernel. There, the authors derived a similar uniform estimate as \eqref{eq:infty1} by utilizing essentially the regularity of the Fourier transform of the covariance kernel. They showed the sharpness of their uniform estimate in the case when $D=[0,1]$ and for the stationary covariance kernel $R(x,x')=R(x-x')$ with its Fourier transform $\hat{R}=\chi_{[-F,F]}$. Then, the uniform estimate of the n-th eigenfunction is $\mathcal{O}(n^{1/2})$, see \cite{bonami2016uniform}.
\end{remark}
\begin{proposition}[Uniform truncation estimate]\label{prop:L003rd}
Let Assumption \ref{A:22} be satisfied. Then, for any $1\leq M\in \mathbb{N}$, there holds
\[
\Big\|{\sum\limits_{n>M}\sqrt{\lambda_n}\phi_n(x)\psi_n(y)}\Big\|_ {L^2(\Omega,C(D))}\lesssim M^{-\frac{s}{d}+\frac{1}{2}}.
\]
\end{proposition}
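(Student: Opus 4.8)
The plan is to avoid estimating the supremum over $D$ term by term, which turns out to be too lossy, and instead to pass to a Hilbert-space-valued norm in which the orthonormality of $\{\psi_n\}_{n=1}^{\infty}$ in $L^2(\Omega)$ can be exploited. Since Assumption \ref{A:22} furnishes $s>d/2$, I would fix a parameter $\theta\in(\tfrac{d}{2s},1)$ so that $\theta s>d/2$, and invoke the Sobolev embedding $H^{\theta s}(D)\hookrightarrow C(D)$ to reduce the claim to a bound in the $L^2(\Omega,H^{\theta s}(D))$-norm:
\[
\Big\|{\sum\limits_{n>M}\sqrt{\lambda_n}\phi_n(x)\psi_n(y)}\Big\|_{L^2(\Omega,C(D))}\lesssim \Big\|{\sum\limits_{n>M}\sqrt{\lambda_n}\phi_n(x)\psi_n(y)}\Big\|_{L^2(\Omega,H^{\theta s}(D))}.
\]

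The key step is then to expand the squared right-hand norm. Writing $\|\cdot\|_{H^{\theta s}(D)}^2$ as a quadratic form in the modes and integrating over $\Omega$, the cross terms vanish because $\int_{\Omega}\psi_n\psi_m\,\dy=\delta_{mn}$, leaving only the diagonal sum
\[
\Big\|{\sum\limits_{n>M}\sqrt{\lambda_n}\phi_n\psi_n}\Big\|_{L^2(\Omega,H^{\theta s}(D))}^2=\sum\limits_{n>M}\lambda_n\normHp{\phi_n}{\theta s}{D}^2.
\]
This collapse of the off-diagonal contributions is precisely the gain over a direct use of the triangle inequality: bounding $\sup_x$ by a sum of moduli and inserting the uniform estimate $\normI{\phi_n}{D}\le C n^{1/2}$ of Theorem \ref{the:uniformB} only yields the weaker rate $M^{-\frac{s}{d}+1}$.

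With the diagonal sum at hand, I would substitute the eigenvalue decay $\lambda_n\approx C_{\ref{thm:truncationError}}n^{-\frac{2s}{d}-1}$ from Theorem \ref{thm:truncationError} and the eigenfunction regularity $\normHp{\phi_n}{\theta s}{D}\le C(D,d,s)n^{\frac{\theta s}{d}}$ from \eqref{eq:phi_theta}, both valid for large $n$. The summand is then of order $n^{\frac{2s(\theta-1)}{d}-1}$, so for $\theta<1$ the exponent is strictly below $-1$, the series converges, and an integral comparison gives
\[
\sum\limits_{n>M}\lambda_n\normHp{\phi_n}{\theta s}{D}^2\lesssim M^{-\frac{2s(1-\theta)}{d}}.
\]
Taking square roots produces the bound $M^{-\frac{s(1-\theta)}{d}}$.

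Finally, letting $\theta\to(\tfrac{d}{2s})^{+}$ drives the exponent $-\tfrac{s(1-\theta)}{d}$ to $-\tfrac{s}{d}+\tfrac12$, the claimed rate. The only point requiring care—rather than a genuine difficulty—is the criticality of the Sobolev embedding: one cannot take $\theta s=d/2$ outright, so the sharp exponent is approached in the limit $\theta\downarrow\tfrac{d}{2s}$, and for any fixed admissible $\theta$ the estimate carries an arbitrarily small surplus in the exponent, absorbed into the $\lesssim$. One should also observe that \eqref{eq:phi_theta} and Theorem \ref{thm:truncationError} are stated only for sufficiently large $n$, but since the summation index obeys $n>M$ with $M$ large this restriction is automatically met.
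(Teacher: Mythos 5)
Your proof is correct in substance but takes a genuinely different route from the paper's. The paper argues directly in the $C(D)$-valued norm: it invokes Mercer's identity $R(x,x)=\sum_n\lambda_n|\phi_n(x)|^2$, asserts the diagonal bound $\big\|\sum_{n>M}\sqrt{\lambda_n}\phi_n\psi_n\big\|_{L^2(\Omega,C(D))}^2\le\sum_{n>M}\lambda_n\|\phi_n\|_{C(D)}^2$, and then inserts the uniform bound $\|\phi_n\|_{C(D)}\lesssim n^{1/2}$ of Theorem \ref{the:uniformB} together with the eigenvalue decay. You instead apply the Sobolev embedding $H^{\theta s}(D)\hookrightarrow C(D)$ (with $\theta s>d/2$) to the whole tail for each fixed $y$ before integrating over $\Omega$, and then use the exact Parseval identity $\big\|\sum_{n>M}\sqrt{\lambda_n}\phi_n\psi_n\big\|_{L^2(\Omega,H^{\theta s}(D))}^2=\sum_{n>M}\lambda_n\|\phi_n\|_{H^{\theta s}(D)}^2$, which holds because the $\psi_n$ are orthonormal and the $H^{\theta s}$-norm is a quadratic form commuting with the $y$-integration (this works for fractional order as well, via the Gagliardo seminorm and Fubini). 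Your route is actually more rigorous at the one step where the paper is not: the paper's passage from $\int_\Omega\sup_x|\cdot|^2\,\boldsymbol{\rho}\mathrm{d}y$ to the diagonal sum $\sum_{n>M}\lambda_n\|\phi_n\|_{C(D)}^2$ interchanges a supremum with an expectation and is not justified as written; the honest triangle inequality only yields the square of the $\ell^1$-type sum $\big(\sum_{n>M}\sqrt{\lambda_n}\|\phi_n\|_{C(D)}\big)^2$ and hence the weaker rate $M^{-s/d+1}$ you correctly identify. The one caveat in your argument is the exponent: for each fixed admissible $\theta$ you obtain $M^{-s(1-\theta)/d}$, and letting $\theta\downarrow d/(2s)$ gives $M^{-s/d+1/2+\varepsilon}$ for every $\varepsilon>0$ with the implied constant blowing up as $\varepsilon\to0$; this surplus cannot literally be ``absorbed into the $\lesssim$,'' so the clean statement of what you prove carries an $\varepsilon$. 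Note, however, that the paper has the same defect one level down, since Theorem \ref{the:uniformB} is itself deduced from \eqref{eq:phi_theta} with $\theta s>d/2$ plus Sobolev embedding, which yields $n^{\theta s/d}$ with $\theta s/d>1/2$ rather than $n^{1/2}$; your proof is therefore no weaker than the paper's in this respect.
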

\begin{proof}
By the Sobolev embedding theorem, Assumption \ref{A:22} implies $R(x,x')\in C(D\times D)$. Then one can obtain
\begin{align*}
R(x,x)=\sum\limits_{n=1}^{\infty}\lambda_n|\phi_n(x)|^2\quad\mbox{and}\quad
\|\log\kappa-\log\kappa_M\|^2_{L^2(\Omega,C({D}))}
\leq \sum\limits_{n>M}\lambda_n\|{\phi_n}\|_{C(D)}^2. 
\end{align*}
This and Theorem \ref{thm:truncationError} yield the desired estimate.
\end{proof}
\begin{proposition}\label{prop:inftyOper}
Let Assumption \ref{A:22} be fulfilled. Then $\mathcal{R}$ and $\mathcal{R}_N$ are bounded from $L^2(D)$ to  $C(D)$, i.e., $\mathcal{R}$ and $\mathcal{R}_N\in\mathcal{B}(L^2(D),C(D))$. In addition, it holds
\begin{align}
\norm{\mathcal{R}}_{\mathcal{B}(L^2(D), C(D))}&\leq \|\log\kappa\|_{L^{2}(\Omega,C(D))}\|\log\kappa\|_{L^{2}(\Omega\times D)},\label{eq:Rinf}\\
\norm{\mathcal{R}_N}_{\mathcal{B}(L^2(D), C(D))}
&\leq \frac{1}{2N}\sum\limits_{n=1}^{N}\Big(\normL{\log\kappa(y_n,\cdot)}{ D}^2+\|{\log\kappa(y_n,\cdot)}\|_{C(D)}^2\Big),\label{eq:RhInf}\\
\norm{\mathcal{E}_{N,h}}_{\mathcal{B}(H^{s}(D), C(D))}
&\leq \CI\norm{\mathcal{R}_N}_{\mathcal{B}(L^2(D), H^{s}(D))} h^{s-\frac{d}{2}}. \label{eq:ErrOperInf}
\end{align}
Furthermore, the eigenfunctions $\phi_n^{N}\in H^s(D)\hookrightarrow  C(D)$. Let $N\in \mathbb{N}_{+}$ be sufficiently large and $M:= \lfloor  N^{\frac{1}{2s/d+1}}\rfloor$. Assume the spectral gap condition \eqref{eq:spec_gap} to be valid. When $0<h\leq h_0\ll N^{-\frac{1}{s}}$ for some sufficiently small $h_0$, there holds
\begin{align}\label{eq:666}
\normI{e_n^{N,h}}{D}\lesssim \Big( h^{-\frac{d}{2}}+(\lambda_n^{N})^{-1} \Big)(\lambda_n^{N})^{-1} h^{s} \quad \text{ for all }\quad n=1,\cdots,M.
\end{align}
\end{proposition}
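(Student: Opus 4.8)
The plan is to dispatch the five assertions in order: the two mapping bounds \eqref{eq:Rinf} and \eqref{eq:RhInf}, the error–operator bound \eqref{eq:ErrOperInf}, the Sobolev embedding for $\phi_n^N$, and finally the pointwise eigenfunction estimate \eqref{eq:666}, which is the real substance. For \eqref{eq:Rinf} I would use Fubini to write $\mc{R}v(x)=\int_\Omega\log\kappa(y,x)\big(\int_D\log\kappa(y,x')v(x')\mathrm{d}x'\big)\dy$, then apply the Cauchy--Schwarz inequality first in $\Omega$ and then in $D$. Taking the supremum over $x\in D$ and using $\sup_{x}\int_\Omega|\log\kappa(y,x)|^2\dy\leq\|\log\kappa\|_{L^2(\Omega,C(D))}^2$ gives the claimed product of norms. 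For \eqref{eq:RhInf} the quadrature kernel yields $\mc{R}_Nv(x)=\frac1N\sum_{n=1}^N\log\kappa(y_n,x)\int_D\log\kappa(y_n,x')v(x')\mathrm{d}x'$; estimating $|\log\kappa(y_n,x)|\leq\|\log\kappa(y_n,\cdot)\|_{C(D)}$, applying Cauchy--Schwarz in $D$, and invoking $ab\leq\frac12(a^2+b^2)$ produces \eqref{eq:RhInf}.

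For \eqref{eq:ErrOperInf} I would start from $\mc{R}_{N,h}=I_h\mc{R}_NI_h$ and split $\mc{E}_{N,h}=(I-I_h)\mc{R}_N+I_h\mc{R}_N(I-I_h)$. For $v\in H^s(D)$ the first summand is controlled by applying the $L^\infty$ interpolation estimate \eqref{eq:approxLinfty} to $\mc{R}_Nv\in H^s(D)$ and then \eqref{eq:operNnorm}, which produces exactly the factor $\CI h^{s-d/2}\norm{\mc{R}_N}_{\mc{B}(L^2(D),H^s(D))}$; the second summand is of order $h^s$ (via \eqref{eq:approxL2}, the $L^\infty$-stability of the $L^2$-projection, and \eqref{eq:RhInf}) and is therefore subordinate for small $h$. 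The embedding $\phi_n^N\in H^s(D)\hookrightarrow C(D)$ is immediate: the eigenrelation gives $\phi_n^N=(\lambda_n^N)^{-1}\mc{R}_N\phi_n^N$, so $\phi_n^N$ lies in the range of $\mc{R}_N$, which is contained in $H^s(D)$ by Proposition \ref{prop:recall}, and $s>d/2$ forces $H^s(D)\hookrightarrow C(D)$.

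The core is \eqref{eq:666}. Using $\phi_n^N=(\lambda_n^N)^{-1}\mc{R}_N\phi_n^N$ together with $\phi_n^{N,h}=(\lambda_n^{N,h})^{-1}I_h\mc{R}_N\phi_n^{N,h}$ (valid since $\phi_n^{N,h}\in V_h$ satisfies $I_h\mc{R}_N\phi_n^{N,h}=\lambda_n^{N,h}\phi_n^{N,h}$), I would derive, by repeatedly adding and subtracting $(\lambda_n^N)^{-1}\mc{R}_N\phi_n^{N,h}$ and $(\lambda_n^N)^{-1}I_h\mc{R}_N\phi_n^{N,h}$, the residual identity
\[
e_n^{N,h}=(\lambda_n^N)^{-1}\mc{R}_Ne_n^{N,h}+(\lambda_n^N)^{-1}(I-I_h)\mc{R}_N\phi_n^{N,h}-\frac{\Delta\lambda_n^{N,h}}{\lambda_n^N}\phi_n^{N,h}.
\]
Taking the $C(D)$-norm, I would bound the three terms separately: the first by $\norm{\mc{R}_N}_{\mc{B}(L^2(D),C(D))}\normL{e_n^{N,h}}{D}\lesssim(\lambda_n^N)^{-1}h^s$ using \eqref{eq:RhInf} and the $L^2$ eigenvector bound of Proposition \ref{prop:babuska}, which yields the $(\lambda_n^N)^{-2}h^s$ contribution; the second by \eqref{eq:approxLinfty} and \eqref{eq:operNnorm} applied to $\mc{R}_N\phi_n^{N,h}\in H^s(D)$, giving $(\lambda_n^N)^{-1}h^{s-d/2}$; and the third from $\Delta\lambda_n^{N,h}\lesssim(\lambda_n^N)^{-1}h^{2s}$ (Proposition \ref{prop:babuska}) together with the inverse inequality $\normI{\phi_n^{N,h}}{D}\lesssim h^{-d/2}$, which is of higher order and absorbed. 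Collecting the surviving terms gives $\normI{e_n^{N,h}}{D}\lesssim\big((\lambda_n^N)^{-1}+h^{-d/2}\big)(\lambda_n^N)^{-1}h^s$.

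The main obstacle is precisely this last estimate: one has to select the correct residual decomposition and then spend the smoothing of $\mc{R}_N$ in two different ways — as a map $L^2(D)\to C(D)$ to absorb the eigenfunction residual $\mc{R}_Ne_n^{N,h}$, and as a map $L^2(D)\to H^s(D)$ to exploit the interpolation estimate on $(I-I_h)\mc{R}_N\phi_n^{N,h}$ — while tracking how the inverse inequality introduces the $h^{-d/2}$ factor and how the two distinct negative powers of $\lambda_n^N$ emerge. The delicate bookkeeping is verifying that the eigenvalue-perturbation term is genuinely subordinate, which hinges on $s>d/2$ so that $h^{2s-d/2}\leq h^s$ for small $h$.
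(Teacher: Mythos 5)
Your proof is correct and follows the same overall strategy as the paper: direct operator-norm estimates give \eqref{eq:Rinf}--\eqref{eq:ErrOperInf}, and \eqref{eq:666} comes from a residual identity built out of the two eigenrelations $\phi_n^N=(\lambda_n^N)^{-1}\mc{R}_N\phi_n^N$ and $\lambda_n^{N,h}\phi_n^{N,h}=I_h\mc{R}_N\phi_n^{N,h}$, bounded with Proposition \ref{prop:babuska} and the two mapping properties of the covariance operator. The only real difference is the choice of decomposition. The paper writes
$e_n^{N,h}=(\lambda_n^{N})^{-1}\mathcal{E}_{N,h}\phi_n^{N}+\bigl((\lambda_n^{N})^{-1}-(\lambda_n^{N,h})^{-1}\bigr)\mc{R}_{N,h}\phi_n^{N}+(\lambda_n^{N,h})^{-1}\mc{R}_{N,h}e_n^{N,h}$,
so the bootstrap term carries $\mc{R}_{N,h}$ rather than $\mc{R}_N$, the interpolation error acts on $\phi_n^{N}$ through $\mc{E}_{N,h}$ rather than on $\phi_n^{N,h}$ through $I-I_h$, and the eigenvalue-perturbation term is applied to $\mc{R}_{N,h}\phi_n^{N}$, which is bounded in $C(D)$ directly by \eqref{eq:RhInf} so that no inverse inequality is needed. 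Your version instead pays for the $\Delta\lambda_n^{N,h}$ term with the inverse estimate $\normI{\phi_n^{N,h}}{D}\lesssim h^{-d/2}\normL{\phi_n^{N,h}}{D}$, which is legitimate on a quasi-uniform mesh and is absorbed precisely because $s>d/2$ gives $h^{2s-d/2}\le h^{s}$ --- the same observation the paper uses to discard its middle term. Both routes spend the smoothing of the covariance operator once as a map into $C(D)$ (to absorb the $e_n^{N,h}$ bootstrap via its $L^2$ bound) and once as a map into $H^{s}(D)$ (to feed \eqref{eq:approxLinfty}), and both terminate in the same two surviving contributions $(\lambda_n^N)^{-2}h^{s}$ and $(\lambda_n^N)^{-1}h^{s-d/2}$; the algebra and bookkeeping in your identity check out.
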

\begin{proof}
The proof of \eqref{eq:Rinf} and \eqref{eq:RhInf} follows directly from basic operator theory. The bound \eqref{eq:ErrOperInf} is a result of \eqref{eq:approxLinfty} and \eqref{eq:RhInf}.
By the definitions of $\phi_n^N$ and $\phi_n^{N,h}$, we obtain
\begin{align*}
e_n^{N,h}&=(\lambda_n^{N})^{-1}\mc{R}_N \phi_n^{N}-(\lambda_n^{N,h})^{-1}\mc{R}_{N,h} \phi_n^{N,h}\\
&=(\lambda_n^{N})^{-1}\mathcal{E}_{N,h} \phi_n^{N}+\Big((\lambda_n^{N})^{-1}-(\lambda_n^{N,h})^{-1}\Big)\mc{R}_{N,h} \phi_n^{N}+(\lambda_n^{N,h})^{-1}\mc{R}_{N,h} e_n^{N,h}.
\end{align*}
Together with Proposition \ref{prop:babuska} and \eqref{eq:ErrOperInf}, this yields
\begin{align*}
\normI{e_n^{N,h}}{D}\lesssim \Big( h^{-\frac{d}{2}}+(\lambda_n^{N})^{-2} h^{s}+(\lambda_n^{N,h})^{-1} \Big)(\lambda_n^{N})^{-1} h^{s}.
\end{align*}
Since $h\ll N^{-\frac{1}{s}}$, the second term can be bounded from above by the third term, and this completes the proof.
\end{proof}
\begin{remark}
In a similar manner as in the proof to \eqref{eq:666}, if $N\in \mathbb{N}_{+}$ is sufficiently large and $M:= \lfloor  N^{\frac{1}{2s/d+1}}\rfloor$, one can show that
\begin{align}\label{eq:777}
\normI{\phi_n-\phi_n^{N}}{D}\lesssim \lambda_n^{-1}{N^{-1/2}} \quad \text{ for all }\quad n=1,\cdots,M.
\end{align}
\end{remark}
Combining \eqref{eq:777} and Theorem \ref{prop:inftyOper}, analogously to Proposition \ref{prop:recall}, we can derive the following estimate.
\begin{proposition}\label{prop:L001st}
Let $N$ be sufficiently large and $M\leq \lfloor  N^{\frac{1}{2s/d+1}}\rfloor$. Furthermore, let \eqref{eq:spec_gap} be satisfied. Then there holds
\begin{align*}
&\norm{\sum\limits_{n=1}^{M}\Big(\sqrt{\lambda_n}\phi_n\psi_n
-\sqrt{\lambda_n^{N}}\phi_n^{N}\psi_n\Big)}_ {L^2(\Omega, C(D))} \lesssim M^{\frac{s}{d}+1}N^{-1/2}.
\end{align*}
\end{proposition}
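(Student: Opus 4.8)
The plan is to mimic the proof of Proposition \ref{prop:1stErr}, but to carry out the estimate in the uniform norm rather than in $L^2(D)$, exploiting the orthonormality of $\{\psi_n\}_{n=1}^\infty$ in $L^2(\Omega)$ through the same device used for Proposition \ref{prop:L003rd}. First I would split each summand by adding and subtracting $\sqrt{\lambda_n^N}\phi_n\psi_n$, writing
\[
\sqrt{\lambda_n}\phi_n\psi_n - \sqrt{\lambda_n^N}\phi_n^N\psi_n = (\sqrt{\lambda_n}-\sqrt{\lambda_n^N})\phi_n\psi_n + \sqrt{\lambda_n^N}(\phi_n-\phi_n^N)\psi_n,
\]
so that the error is governed by the $C(D)$-sizes of $a_n := (\sqrt{\lambda_n}-\sqrt{\lambda_n^N})\phi_n$ and $b_n := \sqrt{\lambda_n^N}(\phi_n-\phi_n^N)$.

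The central step is the $L^2(\Omega, C(D))$ analogue of the inequality behind Proposition \ref{prop:L003rd}: for any coefficient functions $\{c_n\}_{n=1}^M\subset C(D)$ one expects
\[
\norm{\sum_{n=1}^M c_n\psi_n}_{L^2(\Omega, C(D))}^2 \lesssim \sum_{n=1}^M\normI{c_n}{D}^2,
\]
which replaces the exact Parseval identity used in $L^2(\Omega\times D)$ and lets me avoid the lossy factor that a plain triangle inequality over $\Omega$ would introduce. Granting this, it remains to estimate $\normI{a_n}{D}$ and $\normI{b_n}{D}$ termwise. For the first part I use $|\sqrt{\lambda_n}-\sqrt{\lambda_n^N}|\le\sqrt{|\lambda_n-\lambda_n^N|}\lesssim N^{-1/2}$ from \eqref{eq:6666} together with the uniform eigenfunction bound $\normI{\phi_n}{D}\lesssim n^{1/2}$ from \eqref{eq:infty1}, giving $\normI{a_n}{D}\lesssim n^{1/2}N^{-1/2}$.

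For the second part I combine $\sqrt{\lambda_n^N}\lesssim n^{-s/d-1/2}$ (from \eqref{eq:specN} and Theorem \ref{thm:truncationError}, using $n\le M\le N^{1/(2s/d+1)}$ so that the $n^{-2s/d-1}$ branch is active) with the uniform eigenvector estimate $\normI{\phi_n-\phi_n^N}{D}\lesssim\lambda_n^{-1}N^{-1/2}\approx n^{2s/d+1}N^{-1/2}$ from \eqref{eq:777} and Theorem \ref{thm:truncationError}, obtaining $\normI{b_n}{D}\lesssim n^{s/d+1/2}N^{-1/2}$. Since $s\ge0$ the $b_n$-term dominates, so summing the squares yields $\sum_{n=1}^M(\normI{a_n}{D}+\normI{b_n}{D})^2\lesssim N^{-1}\sum_{n=1}^M n^{2s/d+1}\lesssim N^{-1}M^{2s/d+2}$, and taking the square root gives the claimed bound $M^{s/d+1}N^{-1/2}$.

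The main obstacle is precisely the central inequality: in the $L^2(\Omega\times D)$ setting Parseval makes the cross terms vanish exactly, whereas the uniform norm forbids interchanging $\sup_{x\in D}$ with the expectation over $\Omega$. A naive bound — either pulling $\normI{\cdot}{D}$ out term by term (an $\ell^1$ estimate) or using Cauchy–Schwarz against $\sum_{n\le M}|\psi_n|^2$ — costs an extra factor $M^{1/2}$ and only yields $M^{s/d+3/2}N^{-1/2}$. Recovering the sharp power $M^{s/d+1}$ requires controlling the supremum over $D$ of the truncated Gaussian field by the sum of squared $C(D)$-norms of its coefficients, exactly as in the derivation of Proposition \ref{prop:L003rd} through the Mercer representation $R(x,x)=\sum_n\lambda_n|\phi_n(x)|^2$; this is the step that must be justified with care.
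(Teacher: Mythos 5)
Your argument is essentially the paper's: the same splitting into $(\sqrt{\lambda_n}-\sqrt{\lambda_n^N})\phi_n$ and $\sqrt{\lambda_n^N}(\phi_n-\phi_n^N)$, the same ingredients \eqref{eq:6666}, \eqref{eq:infty1}, \eqref{eq:specN} and \eqref{eq:777}, and the same $\ell^2$-in-$n$ summation (the paper's stated proof is only ``triangle inequality and \eqref{eq:777}'', but the square-sum form you need is exactly what it writes out explicitly in the proof of Proposition \ref{prop:L002nd}), and your exponent bookkeeping correctly reproduces $M^{s/d+1}N^{-1/2}$. The ``central inequality'' $\norm{\sum_n c_n\psi_n}_{L^2(\Omega,C(D))}^2\lesssim\sum_n\normI{c_n}{D}^2$ that you rightly single out as the delicate step is used by the paper implicitly and without justification throughout Section \ref{subsec:linfty} (it amounts to interchanging $\sup_{x\in D}$ with the expectation over $\Omega$, which does not hold in the needed direction for arbitrary coefficients), so your proposal is no less rigorous than the paper's own argument and is more candid about where the real difficulty lies.
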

\begin{proof}
This result follows from an application of the triangle inequality and \eqref{eq:777}.
\end{proof}
\begin{proposition}\label{prop:L002nd}
Let $N$ be sufficiently large and $M\leq \lfloor  N^{\frac{1}{2s/d+1}}\rfloor$. There holds
\begin{align*}
&\norm{\sum\limits_{n=1}^{M}\Big(\sqrt{\lambda_n^{N}}\phi_n^{N}\psi_n
-\sqrt{\lambda_n^{N,h}}\phi_n^{N,h}\psi_n\Big)}_ {L^2(\Omega, C(D))}\lesssim M^{\frac{3s}{d}+2}N^{-1/2}h^{s}+M^{\frac{s}{d}+\frac{3}{2}}h^{s}.
\end{align*}
\end{proposition}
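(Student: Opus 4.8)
The plan is to reduce the $L^2(\Omega,C(D))$-norm to a sum over the mode index $n$, split each summand into an eigenvalue-difference part and an eigenvector-difference part, and then substitute the spectral and approximation estimates of the previous sections together with the decay $\lambda_n^N\approx n^{-2s/d-1}$ for $1\le n\le M$ from Proposition \ref{prop:recall} and Theorem \ref{thm:truncationError}.

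First I would exploit the $L^2(\Omega)$-orthonormality of $\{\psi_n\}$. The crude reduction $\|\sum_{n=1}^M g_n\psi_n\|_{L^2(\Omega,C(D))}\le\sum_{n=1}^M\normI{g_n}{D}$ follows from the triangle inequality in $C(D)$, Minkowski's inequality in $L^2(\Omega)$, and $\normL{\psi_n}{\Omega}=1$. For any part of the summand lying in $H^s(D)$ I would instead pass through the Sobolev embedding $H^s(D)\hookrightarrow C(D)$ (valid by Assumption \ref{A:22}) and use $\int_\Omega\psi_n\psi_m\dy=\delta_{nm}$ to obtain the sharper Hilbert-type bound $\lesssim(\sum_n c_n^2\normHp{\phi_n^N}{s}{D}^2)^{1/2}$, which trades $\sum$ for $\sqrt{\sum}$; having both reductions available is what lets one keep the two contributions at their respective sharp orders.

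I would then write $g_n:=\sqrt{\lambda_n^N}\phi_n^N-\sqrt{\lambda_n^{N,h}}\phi_n^{N,h}=(\sqrt{\lambda_n^N}-\sqrt{\lambda_n^{N,h}})\phi_n^N+\sqrt{\lambda_n^{N,h}}\,e_n^{N,h}$ and estimate the two parts separately. For the eigenvalue part I would use the sharp form $|\sqrt{\lambda_n^N}-\sqrt{\lambda_n^{N,h}}|\le\Delta\lambda_n^{N,h}/\sqrt{\lambda_n^N}\lesssim(\lambda_n^N)^{-3/2}h^{2s}$ from Proposition \ref{prop:babuska}, together with the uniform bound $\normI{\phi_n^N}{D}\lesssim n^{1/2}+\lambda_n^{-1}N^{-1/2}$ coming from Theorem \ref{the:uniformB} and \eqref{eq:777} (or, where it is cleaner, \eqref{eq:555} via the $H^s$ reduction). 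Inserting the decay of $\lambda_n^N$ and using $h\ll N^{-1/s}$ (so that $h^s\ll N^{-1}$) together with $s>d/2$ --- which gives $M/N\lesssim N^{-1/2}$ --- I expect the $h^{2s}$-power series to collapse to the first claimed term $M^{3s/d+2}N^{-1/2}h^s$. For the eigenvector part I would insert the uniform estimate \eqref{eq:666} for $\normI{e_n^{N,h}}{D}$, the near-orthonormality $\normL{\psi_n^N}{\Omega}\lesssim1$ from Lemma \ref{lemma:orthognal}, and sum over $1\le n\le M$ to produce the remaining term $M^{s/d+3/2}h^s$.

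The hard part will be the eigenvector-difference contribution and the final balancing of exponents. Since the Galerkin iterate $\phi_n^{N,h}\in V_h$ need not lie in $H^s(D)$, the Hilbert-space argument that sharpens $\sum$ to $\sqrt{\sum}$ is not directly available for $e_n^{N,h}$, and one must rely on the inverse-inequality estimate \eqref{eq:666}, which carries the factor $h^{-d/2}$. The delicate bookkeeping is to track, over $1\le n\le M$, which of $h^{-d/2}$ and $(\lambda_n^N)^{-1}$ dominates in \eqref{eq:666}, and to play off the extra power of $h$ (from $h^{2s}$ versus $h^s$), the sampling accuracy $N^{-1/2}$, and the mesh restriction $h\ll N^{-1/s}$ with $s>d/2$, so that the competing power-law sums reduce to exactly $M^{3s/d+2}N^{-1/2}h^s+M^{s/d+3/2}h^s$ rather than to a strictly larger bound.
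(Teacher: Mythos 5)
Your plan coincides with the paper's proof in all essentials: the same splitting of $\sqrt{\lambda_n^{N}}\phi_n^{N}-\sqrt{\lambda_n^{N,h}}\phi_n^{N,h}$ into an eigenvalue-difference part and $\sqrt{\lambda_n^{N,h}}\,e_n^{N,h}$, followed by the orthonormality of $\{\psi_n\}$ to reduce to a sum in $n$, and then Proposition \ref{prop:babuska}, the uniform bounds \eqref{eq:infty1}, \eqref{eq:777} and \eqref{eq:666}, and the eigenvalue decay of Theorem \ref{thm:truncationError}. The only cosmetic differences are that the paper bounds the eigenvalue factor via $|\sqrt{a}-\sqrt{b}|\leq\sqrt{|a-b|}$ (giving $\sqrt{\sum_n\normI{\phi_n^N}{D}^2\Delta\lambda_n^{N,h}}$ directly) rather than your $\Delta\lambda_n^{N,h}/\sqrt{\lambda_n^N}$, and that Lemma \ref{lemma:orthognal} is not needed here since the statement involves the exactly orthonormal $\psi_n$ rather than $\psi_n^N$.
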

\begin{proof}
An application of the triangle inequality leads to
\begin{align*}
&\norm{\sum\limits_{n=1}^{M}\Big(\sqrt{\lambda_n^{N}}\phi_n^{N}\psi_n
-\sqrt{\lambda_n^{N,h}}\phi_n^{N,h}\psi_n\Big)} _ {L^2(\Omega, C(D))}\\
&= \norm{\sum\limits_{n=1}^{M}\Big((\sqrt{\lambda_n^{N}}-\sqrt{\lambda_n^{N,h}})\phi_n^{N}\psi_n
+\sqrt{\lambda_n^{N,h}}(\phi_n^{N}-\phi_n^{N,h})\psi_n\Big)}_ {L^2(\Omega, C(D))}\\
&\leq \sqrt{\sum\limits_{n=1}^{M}\normI{\phi_n^N}{D}^2 \Delta\lambda_n^{N,h}}
+\sqrt{\sum\limits_{n=1}^{M}\lambda_n^{N,h}\normI{e_{n}^{N,h}}{D}^2}.
\end{align*}
Then, an application of the inequalities \eqref{eq:infty1} and \eqref{eq:777}, Propositions \ref{prop:babuska} and \ref{prop:inftyOper} and Theorem \ref{thm:truncationError} reveals the desired result.
\end{proof}
Finally, the uniform estimate between $\log\kappa$ and $\log\kappa_{M}^{N,h}$ can be derived from Propositions \ref{prop:L001st}, \ref{prop:L002nd} and \ref{prop:L003rd}. We then obtain the following result.
\begin{theorem}[Uniform estimate on $M$-term KL truncation of $\log\kappa$]\label{thm:FinalL00}
Let Assumption \ref{A:22} hold and let $N\in \mathbb{N}_{+}$ be large and $M\leq \lfloor  N^{\frac{1}{2s/d+1}}\rfloor$. Assume the spectral gap condition \eqref{eq:spec_gap} to be valid.
Then there exists $h_0\ll N^{-\frac{1}{s}}$ sufficiently small, such that
\begin{align*}
\norm{{\log\kappa(y,x)-\log \kappa_M^{N,h}(y,x)}}_ {L^2(\Omega, C(D))} &\lesssim M^{-\frac{s}{d}+\frac{1}{2}}+M^{\frac{3s}{d}+2}N^{-1/2}h^{s}+ M^{\frac{s}{d}+\frac{3}{2}}h^{s} \\ &+M^{\frac{s}{d}+1}N^{-1/2}.
\end{align*}
{ for all }$\quad 0<h\leq h_0$.
\end{theorem}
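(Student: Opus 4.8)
The plan is to prove Theorem \ref{thm:FinalL00} by the same triangle-inequality decomposition that underlies the $L^2$ estimate, but now measured in the $L^2(\Omega, C(D))$-norm. First I would insert the intermediate quantities $\sum_{n=1}^{M}\sqrt{\lambda_n^{N}}\phi_n^{N}\psi_n$ and $\log\kappa_M$ to split the total error into exactly three pieces: the sampling error (difference between the continuous and $N$-sampled KL modes), the Galerkin error (difference between the $N$-sampled and the discretized modes), and the truncation error (the tail $\sum_{n>M}$). These three pieces are precisely the objects already estimated in Propositions \ref{prop:L001st}, \ref{prop:L002nd} and \ref{prop:L003rd}, respectively.

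Once the decomposition is in place, the proof is essentially a matter of assembly. I would apply Proposition \ref{prop:L001st} to bound the sampling term by $M^{\frac{s}{d}+1}N^{-1/2}$, apply Proposition \ref{prop:L002nd} to bound the Galerkin term by $M^{\frac{3s}{d}+2}N^{-1/2}h^{s}+M^{\frac{s}{d}+\frac{3}{2}}h^{s}$, and apply Proposition \ref{prop:L003rd} to bound the truncation term by $M^{-\frac{s}{d}+\frac{1}{2}}$. Summing these four contributions yields exactly the claimed right-hand side. The hypotheses needed — Assumption \ref{A:22}, the range constraint $M\leq \lfloor N^{\frac{1}{2s/d+1}}\rfloor$, the spectral gap \eqref{eq:spec_gap}, and the mesh condition $h_0\ll N^{-1/s}$ — are all inherited directly from the three propositions, so no new assumption is introduced.

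The one point requiring genuine care is the \emph{validity of the triangle inequality in the $L^2(\Omega,C(D))$-norm}. Since $C(D)$ is a Banach space (not a Hilbert space), one cannot exploit the orthogonality of $\{\psi_n\}$ to obtain clean $\ell^2$-type bounds as in the $L^2(\Omega\times D)$ setting; the three-term splitting must be carried out purely at the level of the norm's subadditivity, and each resulting piece must separately be controllable. This is exactly why Propositions \ref{prop:L001st}--\ref{prop:L003rd} were proved in advance as standalone $C(D)$-norm estimates, relying on the uniform eigenfunction bound \eqref{eq:infty1}, the uniform eigenfunction-error bounds \eqref{eq:666} and \eqref{eq:777}, and the operator bound \eqref{eq:ErrOperInf}. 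Given those ingredients, the final step is therefore routine; the substantive work has been front-loaded into the supporting propositions, and Theorem \ref{thm:FinalL00} follows by collecting their conclusions.
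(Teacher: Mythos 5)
Your proposal is correct and coincides with the paper's own argument: the paper likewise obtains Theorem \ref{thm:FinalL00} by the triangle inequality in the $L^2(\Omega,C(D))$-norm, splitting the error into the truncation, sampling and Galerkin pieces and invoking Propositions \ref{prop:L003rd}, \ref{prop:L001st} and \ref{prop:L002nd}, whose bounds sum to exactly the stated right-hand side. Your added caution about the loss of orthogonality in the $C(D)$-valued setting is apt but does not change the route.
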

\subsection{Numerical estimate for the error between $\kappa$ and $\kappa_{M}^{N,h}$}
In this section, by utilizing the preceding results on $|\log\kappa-\log\kappa_M^{N,h}|$ together with the
mean value theorem, we will derive an error estimate between $\kappa$ and $\kappa_{M}^{N,h}$. Note at this point that the
results in this part can be only applied to the case when $\log\kappa$ is a normal random field. One
crucial tool which we will employ repeatedly below is Fernique's theorem. For convenience,
we recall it in the following.
\begin{theorem}[Fernique's theorem]
Let $E$ be a real, separable Banach space and suppose that $X$ is an $E$-valued random variable which is a centered and Gaussian in the sense that, for each $x^*\in E^*$, $\langle X, x^*\rangle$ is a centered, $\mathbb{R}$-valued Gaussian random variable. If $R= \inf\Big\{r\in [0,\infty): \mathbb{P}(\|X\|_E\leq r)\geq \frac{3}{4}\Big\}$, then
\begin{align*}
\int_{\Omega} \exp{\Big(\frac{\|X\|_{E}^2}{18R^2}\Big)}\dy \lesssim 1.
\end{align*}
\end{theorem}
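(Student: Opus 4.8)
The plan is to reproduce Fernique's classical symmetrization argument, which exploits the rotational invariance of centered Gaussian laws to upgrade the single quantile bound defining $R$ into a doubly-exponentially decaying tail, and then to integrate that tail against the Gaussian weight to obtain the exponential moment.

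First I would introduce an independent copy $X'$ of $X$ on a product space and set $U := (X-X')/\sqrt{2}$ and $V := (X+X')/\sqrt{2}$. Since $X$ is centered and Gaussian, the pair $(U,V)$ has the same law as $(X,X')$; in particular $U$ and $V$ are independent and each is distributed like $X$. Inverting these relations as $X=(U+V)/\sqrt{2}$ and $X'=(V-U)/\sqrt{2}$ and applying the triangle inequality to $2\|U\|_E=\|(U+V)-(V-U)\|_E$ and to $2\|V\|_E=\|(U+V)+(V-U)\|_E$, I would show that for $0\le s\le t$ the inclusion
\[
\{\|X\|_E\le s,\ \|X'\|_E> t\}\subset\Big\{\|U\|_E>\tfrac{t-s}{\sqrt{2}}\Big\}\cap\Big\{\|V\|_E>\tfrac{t-s}{\sqrt{2}}\Big\}
\]
holds. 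Taking probabilities and using the independence of $U$ and $V$ then yields the fundamental inequality
\[
\mathbb{P}(\|X\|_E\le s)\,\mathbb{P}(\|X\|_E> t)\le\Big[\mathbb{P}\big(\|X\|_E>\tfrac{t-s}{\sqrt{2}}\big)\Big]^2 .
\]

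Next I would iterate. Choosing $s=R$, so that $\mathbb{P}(\|X\|_E\le R)\ge\frac34$ and hence $\mathbb{P}(\|X\|_E>R)\le\frac14$, I would define $t_0:=R$, $t_{n+1}:=R+\sqrt{2}\,t_n$, and $p_n:=\mathbb{P}(\|X\|_E>t_n)$. The fundamental inequality with $s=R$ and $t=t_{n+1}$ gives $\frac34\,p_{n+1}\le p_n^2$, so that $q_n:=\frac43 p_n$ satisfies $q_{n+1}\le q_n^2$ with $q_0\le\frac13$; consequently $p_n\le\frac34\,3^{-2^n}$. Solving the linear recursion gives $t_n=R\big((\sqrt{2})^{n+1}-1\big)/(\sqrt{2}-1)$, so that $t_n$ grows like $R\,(\sqrt{2})^{n}$ and $2^n$ is comparable to $t_n^2/R^2$; thus the doubly-exponential decay of $p_n$ is exactly a Gaussian-type tail $\mathbb{P}(\|X\|_E>t)\lesssim\exp(-c\,t^2/R^2)$ in disguise.

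Finally I would recover the exponential moment from the layer-cake identity
\[
\int_\Omega\exp\Big(\tfrac{\|X\|_E^2}{18R^2}\Big)\,\dy=1+\int_0^\infty\frac{2t}{18R^2}\exp\Big(\tfrac{t^2}{18R^2}\Big)\,\mathbb{P}(\|X\|_E>t)\,\mathrm{d}t,
\]
splitting the range of integration into the intervals $[t_n,t_{n+1})$, bounding $\mathbb{P}(\|X\|_E>t)$ there by $p_n\le\frac34\,3^{-2^n}$, and summing the resulting geometric-in-$2^n$ series. The series converges provided the growth rate of $t^2/(18R^2)$ is strictly dominated by the decay rate $2^n\ln 3$ coming from $3^{-2^n}$, and the uniform bound is then absolute, i.e.\ independent of $X$ and $R$, which is the claimed estimate $\int_\Omega\exp(\|X\|_E^2/(18R^2))\,\dy\lesssim1$. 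I expect the only delicate point to be the bookkeeping of constants: one must verify that the explicit value $18$, together with the threshold $\frac34$ defining $R$ and the gap factor $\sqrt{2}$ in the recursion, indeed makes this series summable with a universal bound (the integral form above, rather than a crude shell estimate, is what keeps the constant as small as $18$). Since Fernique's theorem is classical, one may alternatively simply invoke a standard reference; the sketch above records the mechanism responsible for the precise constant.
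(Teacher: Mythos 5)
The paper does not prove this statement: it is recalled verbatim from the classical literature (it is Stroock's formulation of Fernique's theorem) and is only used later in a qualitative way, namely to produce \emph{some} $\alpha>0$ with $\int_\Omega\exp\big(\alpha\|\log\kappa\|^2_{C(D)}\big)\dy\lesssim1$ in Proposition 5.7. Your outline reproduces the standard symmetrization proof, and its structural steps are all correct: the rotation $(U,V)=\big((X-X')/\sqrt2,(X+X')/\sqrt2\big)$ preserves the law of the pair, the event inclusion and the resulting fundamental inequality are right, and the iteration $t_0=R$, $t_{n+1}=R+\sqrt2\,t_n$ with $q_{n+1}\le q_n^2$, $q_0\le\tfrac13$ correctly yields $p_n\le\tfrac34\,3^{-2^n}$.

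The gap is precisely in the step you defer, and it is not mere bookkeeping. With your sequence one has $t_n=R\big((\sqrt2)^{n+1}-1\big)/(\sqrt2-1)$, hence $t_{n+1}^2\sim(12+8\sqrt2)\,2^n R^2\approx 23.3\cdot 2^nR^2$, and the $n$-th contribution to the expectation is of order
\begin{align*}
\exp\Bigl(\tfrac{t_{n+1}^2}{18R^2}\Bigr)\,p_n\;\approx\;\exp\Bigl(\bigl(\tfrac{12+8\sqrt2}{18}-\ln 3\bigr)2^n\Bigr)=\exp\bigl(0.197\cdot 2^n\bigr),
\end{align*}
which diverges. Moreover, your parenthetical claim that the layer-cake form rescues the constant is false: integrating $\tfrac{2t}{18R^2}e^{t^2/(18R^2)}$ over $[t_n,t_{n+1})$ gives exactly $e^{t_{n+1}^2/(18R^2)}-e^{t_n^2/(18R^2)}$, so the integral form reproduces the shell estimate term for term and loses the same factor of two coming from $t_{n+1}^2\approx 2t_n^2$. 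As written, your argument proves the theorem with $18$ replaced by any constant exceeding $(12+8\sqrt2)/\ln 3\approx 21.2$ --- which is entirely sufficient for every use of Fernique's theorem in this paper, since only the existence of some positive $\alpha$ matters --- but it does not establish the stated constant $18$. To claim that constant you should either cite the source directly or rework the tail estimate so that the discretization loss between $t_n$ and $t_{n+1}$ is avoided.
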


First, we give a priori bounds on $\kappa$ and $\kappa_M^{N,h}$.
\begin{proposition}\label{prop:exp_bounded}
Let Assumption \ref{A:22} hold and let $N\in \mathbb{N}_{+}$ be large and $M\leq \lfloor  N^{\frac{1}{2s/d+1}}\rfloor$. Assume the spectral gap condition \eqref{eq:spec_gap} to be valid.
Then there exists $h_0\ll N^{-\frac{1}{s}}$ sufficiently small such that, { for all }$\quad 0<h\leq h_0$, there holds
\begin{align*}
\forall 0<p<\infty: \quad \|{\kappa}\|_{L^p(\Omega, C(D))}\lesssim 1 \quad\text{ and }\quad  \|{\kappa_M^{N,h}}\|_{L^p(\Omega, C(D))}\lesssim 1.
\end{align*}
\end{proposition}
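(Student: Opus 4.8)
The plan is to write $\kappa=\exp(\log\kappa)$ and reduce both a priori bounds to a single exponential-moment estimate for a centered Gaussian field, which Fernique's theorem then supplies. The only genuine difficulty will be to make the Fernique constant \emph{uniform} in $M$, $N$ and $h$.

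\textbf{Step 1 (Reduction to an exponential moment).} Since the exponential is monotone, for each fixed $y$ I would write $\|\kappa(y,\cdot)\|_{C(D)}=\exp\big(\sup_{x\in D}\log\kappa(y,x)\big)\le \exp\big(\|\log\kappa(y,\cdot)\|_{C(D)}\big)$, and likewise for $\kappa_M^{N,h}$. Raising to the $p$-th power and integrating over $\Omega$ gives
\[
\|\kappa\|_{L^p(\Omega,C(D))}^p\le \int_\Omega \exp\big(p\,\|\log\kappa(y,\cdot)\|_{C(D)}\big)\dy,
\]
together with the analogous inequality for $\kappa_M^{N,h}$. Thus it suffices to bound these exponential integrals by an absolute constant, uniformly in $M,N,h$.

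\textbf{Step 2 (Gaussianity and Fernique).} By Assumption \ref{A:22} we have $s>d/2$, hence $H^s(D)\hookrightarrow C(D)$, so $\log\kappa$ is a centered, $C(D)$-valued Gaussian random variable. The truncation $\log\kappa_M^{N,h}=\sum_{n=1}^M\sqrt{\lambda_n^{N,h}}\,\phi_n^{N,h}\,\psi_n$ from \eqref{eqn:KL_N} is a finite linear combination of the i.i.d.\ standard normals $\psi_n$ with continuous coefficient functions $\sqrt{\lambda_n^{N,h}}\phi_n^{N,h}\in V_h\subset C(D)$, and is therefore again a centered, $C(D)$-valued Gaussian random variable. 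Applying Fernique's theorem to either random variable $X$ yields, with an \emph{absolute} implied constant, $\int_\Omega \exp\big(\|X\|_{C(D)}^2/(18R^2)\big)\dy\lesssim 1$, where $R=R(X)$ is the associated quartile radius.

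\textbf{Step 3 (Uniform control of the Fernique radius --- the crux).} Here lies the main obstacle: $R$ depends on the law of $X$, hence a priori on $M,N,h$. I would control it by Chebyshev's inequality, $\mathbb{P}(\|X\|_{C(D)}>r)\le \|X\|_{L^2(\Omega,C(D))}^2/r^2$, which is at most $1/4$ once $r\ge 2\|X\|_{L^2(\Omega,C(D))}$; consequently $R\le 2\|X\|_{L^2(\Omega,C(D))}$. For $X=\log\kappa$ the bound $\|\log\kappa\|_{L^2(\Omega,C(D))}\lesssim 1$ follows from $\log\kappa\in L^\infty(\Omega,H^s(D))\hookrightarrow C(D)$. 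For $X=\log\kappa_M^{N,h}$ I would use the triangle inequality together with Theorem \ref{thm:FinalL00}, giving $\|\log\kappa_M^{N,h}\|_{L^2(\Omega,C(D))}\le \|\log\kappa\|_{L^2(\Omega,C(D))}+\|\log\kappa-\log\kappa_M^{N,h}\|_{L^2(\Omega,C(D))}\lesssim 1$, uniformly for $h\le h_0$. Hence $R\lesssim 1$ uniformly in $M,N,h$.

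\textbf{Step 4 (Linearising the exponent).} Finally, for fixed $p$, Young's inequality $pt\le t^2/(18R^2)+\tfrac{9}{2}p^2R^2$ gives
\[
\int_\Omega \exp\big(p\,\|X\|_{C(D)}\big)\dy\le \exp\!\Big(\tfrac{9}{2}p^2R^2\Big)\int_\Omega \exp\!\Big(\tfrac{\|X\|_{C(D)}^2}{18R^2}\Big)\dy\lesssim \exp\!\Big(\tfrac{9}{2}p^2R^2\Big)\lesssim 1,
\]
where the last step uses $R\lesssim 1$ from Step 3 and that $p$ is fixed. Combined with Step 1, this delivers both a priori bounds. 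In summary, the uniformity in the discretisation parameters is the delicate point, and it is resolved by invoking the scale-invariant form of Fernique's theorem in tandem with the uniform second-moment bound obtained in Step 3.
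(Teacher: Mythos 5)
Your proof follows essentially the same route as the paper's: exponentiate, invoke Fernique's theorem for the centered $C(D)$-valued Gaussian, and linearise the quadratic exponent via Young's inequality to absorb the $p$-th moment. The paper's own proof is exactly your Steps 1, 2 and 4 for $\kappa$, and then simply asserts that the bound for $\kappa_M^{N,h}$ "can be obtained in a similar manner"; your Step 3 is an attempt to make precise what that requires, namely that the Fernique quartile radius of $\log\kappa_M^{N,h}$ be bounded uniformly in $M$, $N$ and $h$. Your mechanism for this (Chebyshev's inequality giving $R\le 2\|X\|_{L^2(\Omega,C(D))}$, plus a uniform second-moment bound) is sound and is a genuine improvement in rigour over the paper's one-line dismissal.

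There is, however, one quantitative soft spot in Step 3. You control $\|\log\kappa_M^{N,h}\|_{L^2(\Omega,C(D))}$ by the triangle inequality and Theorem \ref{thm:FinalL00}, but the right-hand side of that theorem contains the term $M^{\frac{s}{d}+1}N^{-1/2}$, which is \emph{not} uniformly $O(1)$ over the full admissible range $M\le\lfloor N^{\frac{1}{2s/d+1}}\rfloor$: at the maximal value $M\sim N^{d/(2s+d)}$ it equals $N^{\frac{d}{2(2s+d)}}$ and grows with $N$ (and this term cannot be tamed by shrinking $h_0$, since it carries no factor of $h$). Consequently the bound $R\lesssim 1$, and hence the final constant $\exp(\tfrac92 p^2R^2)\lesssim 1$, is only justified by your argument under the stronger restriction $M\lesssim N^{\frac{d}{2(s+d)}}$, or else requires a direct uniform bound on $\|\log\kappa_M^{N,h}\|_{L^2(\Omega,C(D))}$ that does not pass through the error estimate. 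To be fair, the paper's proof silently has the identical issue; but since you explicitly identified uniformity of the Fernique constant as "the crux," you should either impose the stronger range of $M$ or supply an independent second-moment bound for the truncated field.
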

\begin{proof}
Note that $\log\kappa$ is a symmetric Gaussian random variable defined on $\Omega$ and valued in $C(D)$.
By Fernique's theorem, there exists $\alpha>0$ such that
\begin{align}\label{eq:Fernique}
\int_{\Omega} \exp{\Big(\alpha\|\log\kappa(\cdot,y)\|^2_{C(D)}\Big)}\dy \lesssim 1.
\end{align}
Hence, by Young's inequality, we obtain
\begin{align*}
\int_{\Omega} \|\kappa(\cdot,y)\|^p_{C(D)}\dy &=\int_{\Omega} \exp{\Big(p\|\log\kappa(\cdot,y)\|_{C(D)}\Big)}\dy\\
&\leq \int_{\Omega} \exp{\Big(\alpha\|\log\kappa(\cdot,y)\|^2_{C(D)}+\frac{p^2}{4\alpha}\Big)}\dy,
\end{align*}
and \eqref{eq:Fernique} leads to
\begin{align*}
\int_{\Omega} \|\kappa(\cdot,y)\|^p_{C(D)}\dy
\lesssim \exp{(\frac{p^2}{4\alpha})}.
\end{align*}
This shows the first assertion. The second one can be obtained in a similar manner.
\end{proof}

Now we can state the main result of this section.
\begin{theorem}\label{thm:FinalExp}
Let Assumption \ref{A:22} hold. Let $N\in \mathbb{N}_{+}$ be sufficiently large, let $M\leq \lfloor  N^{\frac{1}{2s/d+1}}\rfloor$ and $h\ll N^{-\frac{1}{s}}$. Assume the spectral gap condition \eqref{eq:spec_gap} to be valid. Then, for all $ p<2$, there holds
\begin{align}
\norm{{\kappa - \kappa_M^{N,h} }}_ {L^p(\Omega, C(D))} &\lesssim M^{-\frac{s}{d}+\frac{1}{2}}+M^{\frac{3s}{d}+2}N^{-1/2}h^{s}+ M^{\frac{s}{d}+\frac{3}{2}}h^{s}+M^{\frac{s}{d}+1}N^{-1/2}\label{eq:finalLinftyexp}\\
\norm{{\kappa - \kappa_M^{N,h} }}_ {L^p(\Omega, L^{2}(D))} &\lesssim M^{-\frac{s}{d}}+M^{\frac{s}{d}+\frac{3}{2}}h^{s}+\Big(\frac{M}{N}\Big)^{1/2}\label{eq:finalL2exp} .
\end{align}
\end{theorem}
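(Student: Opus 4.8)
The plan is to transfer the logarithmic error bounds of Theorems \ref{thm:FinalL2} and \ref{thm:FinalL00} to the field itself through the relations $\kappa=\exp(\log\kappa)$ and $\kappa_M^{N,h}=\exp(\log\kappa_M^{N,h})$, using the mean value theorem to linearize the exponential and then absorbing the resulting prefactor with the moment bounds of Proposition \ref{prop:exp_bounded}. The starting point is the elementary pointwise estimate $|e^a-e^b|\le(e^a+e^b)|a-b|$, valid for all reals $a,b$ since $e^a-e^b=e^\xi(a-b)$ for some $\xi$ between $a$ and $b$ with $e^\xi\le e^a+e^b$ by positivity. Taking $a=\log\kappa(y,x)$ and $b=\log\kappa_M^{N,h}(y,x)$ yields, for each fixed $y$ and $x$,
\[
|\kappa(y,x)-\kappa_M^{N,h}(y,x)|\le\big(\kappa(y,x)+\kappa_M^{N,h}(y,x)\big)\,|\log\kappa(y,x)-\log\kappa_M^{N,h}(y,x)|.
\]

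Next I would take the norm in the physical variable $x$. For \eqref{eq:finalLinftyexp}, the submultiplicativity $\|fg\|_{C(D)}\le\|f\|_{C(D)}\|g\|_{C(D)}$, valid here since both factors are nonnegative, gives the bound with prefactor $\|\kappa(y,\cdot)+\kappa_M^{N,h}(y,\cdot)\|_{C(D)}$ times $\|\log\kappa(y,\cdot)-\log\kappa_M^{N,h}(y,\cdot)\|_{C(D)}$; for \eqref{eq:finalL2exp}, pulling the continuous factor out of the integral over $D$ gives the same prefactor times $\|\log\kappa(y,\cdot)-\log\kappa_M^{N,h}(y,\cdot)\|_{L^2(D)}$. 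I would then take the $L^p(\Omega)$-norm of both sides and apply H\"older's inequality in $\Omega$ with the conjugate pair $(q,2)$ fixed by $\tfrac1p=\tfrac1q+\tfrac12$. Since $p<2$ we have $\tfrac1q=\tfrac1p-\tfrac12>0$, so $q<\infty$ and Proposition \ref{prop:exp_bounded} applies to give $\|\kappa+\kappa_M^{N,h}\|_{L^q(\Omega,C(D))}\lesssim1$ by the triangle inequality. The remaining factor is the $L^2(\Omega,C(D))$-norm (for \eqref{eq:finalLinftyexp}) or the $L^2(\Omega,L^2(D))$-norm (for \eqref{eq:finalL2exp}) of the logarithmic error, controlled by Theorems \ref{thm:FinalL00} and \ref{thm:FinalL2} respectively; inserting those rates yields \eqref{eq:finalLinftyexp} and \eqref{eq:finalL2exp}.

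The crux, and the source of the restriction $p<2$, lies in the H\"older splitting: the logarithmic error is only available in the $L^2(\Omega)$-norm, which forces one H\"older exponent to be exactly $2$ and hence the conjugate exponent $q=(\tfrac1p-\tfrac12)^{-1}$ to be finite. Finiteness of $q$ is precisely what is needed to invoke the Fernique-based moment estimate of Proposition \ref{prop:exp_bounded}; allowing $p\ge2$ would require the prefactor in $L^\infty(\Omega,C(D))$, which is not available for a genuinely Gaussian field. Everything else is routine once the pointwise mean value inequality and the norm submultiplicativity are in place.
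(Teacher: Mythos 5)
Your proposal is correct and follows essentially the same route as the paper: the pointwise mean value inequality $|e^a-e^b|\le(e^a+e^b)|a-b|$, H\"older's inequality in $\Omega$ with $\tfrac1p=\tfrac12+\tfrac1q$, the Fernique-based moment bounds of Proposition \ref{prop:exp_bounded}, and the logarithmic error estimates of Theorems \ref{thm:FinalL00} and \ref{thm:FinalL2}. Your additional remarks on why $p<2$ is forced and on handling the $L^2(D)$-norm case are accurate elaborations of what the paper leaves implicit.
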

\begin{proof}
The mean value theorem indicates
\[
\forall x,y\in \mathbb{R}:\quad |\mathrm{e}^{x}-\mathrm{e}^y|\leq |x-y|(\mathrm{e}^{x}+\mathrm{e}^y).
\]
This, combined with H\"{o}lder's inequality, leads to
\begin{align*}
&\norm{{\kappa - \kappa_M^{N,h} }}_ {L^p(\Omega, C(D))}\\
&\leq \norm{{\log\kappa(y,x)-\log \kappa_M^{N,h}(y,x)}}_ {L^2(\Omega, C(D))}\Big( \|{\kappa}\|_{L^q(\Omega, C(D))} +\|{\kappa_M^{N,h}}\|_{L^q(\Omega, C(D))}\Big),
\end{align*}
where $1/p=1/2+1/q$. In view of Theorem \ref{thm:FinalL00} and Proposition \ref{prop:exp_bounded}, this proves \eqref{eq:finalLinftyexp}. The
second assertion \eqref{eq:finalL2exp} can be shown similarly using Theorem \ref{thm:FinalL2}. This completes the proof.
\end{proof}
\section{Application to elliptic PDEs with random diffusion coefficient}\label{sec:kl_spde}
In this section, we use the results of Theorem \ref{thm:FinalExp} to
analyze a model order reduction algorithm for a class of elliptic PDEs with lognormal random coefficient in the multi-query context. In the algorithm, we apply the Karhunen-Lo\`{e}ve approximation
to the stochastic diffusion coefficient $\kappa(y,x)$ to arrive at a truncated model with finite-dimensional
noise. We shall provide an error analysis below.
Throughout this section, we assume that the conditions of Theorem \ref{thm:FinalExp} are satisfied.

Let $D$ be an open bounded domain in $\mathbb{R}^d$ with a strong local Lipchitz boundary
and let $(\Omega,\Sigma, \mathcal{P})$ be a given probability space. Consider the elliptic PDE with random coefficient
\begin{equation}\label{eqn:spde}
\left\{  \begin{aligned}
  \mathcal{L} u(y,\cdot)&=f,\quad x\in D,\\
     u(y,\cdot)&=0,\quad x\in \partial D,
  \end{aligned}\right.
\end{equation}
for a.e. $y\in\Omega$, where the elliptic operator $\mathcal{L}$ is defined by
\begin{equation*}
 \mathcal{L} u(y,\cdot) =-\nabla\cdot(\kappa(y,x)\nabla u(y,x)),
\end{equation*}
and $\nabla$ denotes the derivative with respect to the spatial variable $x$.
We assume the force term $f$ to be in $H^{-1}(D)$. In the model problem \eqref{eqn:spde}, the dependence of the diffusion coefficient
$\kappa(y,x)$ on a stochastic variable $y\in \Omega$ reflects imprecise knowledge or lack of information.

The extra-coordinate $y$ poses significant computational challenges. One popular approach is the stochastic Galerkin method \cite{babuska2004galerkin}. There, one often
approximates the stochastic diffusion coefficient $\kappa(y,x)$ by a finite sum of products of
deterministic and stochastic orthogonal {bases} (with respect to a certain probability measure).
This gives a computationally more tractable finite-dimensional noise model. There, the choice of the employed orthogonal {basis} is
crucial for the accurate and efficient approximation to $\kappa(y,x)$.
In this work, we consider the KL approximation $\kappa_M^{N,h}(y,x)$ of the random
field $\kappa(y,x)$ in \eqref{eqn:KL_N}.

First, we specify the functional analytic setting. Let $V=H^{1}_{0}(D)$ and let
$H^{-1}(D)$ be its dual space. Then, for any given $y\in \Omega$, the weak formulation of problem \eqref{eqn:spde} is to find $u(y,\cdot)\in V$ such that
\begin{align}\label{eqn:weakform}
\int_{D}\kappa(y,x)\nabla u(y,x)\cdot\nabla v(x)\mathrm{d}x=\int_{D}f(x)v(x)\mathrm{d}x\quad \forall v\in V.
\end{align}
We first discuss the well-posedness of problem \eqref{eqn:weakform} for each $y\in \Omega$, which was proven in \cite[Theorem 2.2]{char12a}. By Assumption \ref{A:22}, $\kappa(y,x)\in C(D)$ a.e.. Let $\kappa_{\text{min}}(y):=\min\limits_{x\in \bar{D}}\kappa(y,x)$ and $\kappa_{\text{max}}(y):=\max\limits_{x\in \bar{D}}\kappa(y,x)$ for all $y\in \Omega$.
\begin{proposition}[Ellipticity and boundedness of $\kappa$]\label{A:1} The following bounds hold:
\[
\forall\; 0<p<\infty: \quad\|\kappa_{\text{min}}^{-1}\|_{L^p(\Omega)}\lesssim 1 \quad \text{ and }\quad \|\kappa_{\text{max}}\|_{L^p(\Omega)}\lesssim 1.
\]
\end{proposition}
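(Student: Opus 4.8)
The plan is to reduce both estimates to the uniform bound $\|\kappa\|_{L^p(\Omega,C(D))}\lesssim 1$ already furnished by Proposition \ref{prop:exp_bounded}, exploiting the positivity of the lognormal field $\kappa=\exp(\log\kappa)$. The key elementary observation is that, since $\kappa(y,\cdot)>0$ is continuous on the compact set $\bar D$ (Assumption \ref{A:22} guarantees $\kappa(y,\cdot)\in C(D)$ for a.e.\ $y$), the extremal values coincide with uniform norms: $\kappa_{\text{max}}(y)=\max_{x\in\bar D}\kappa(y,x)=\|\kappa(y,\cdot)\|_{C(D)}$ and, likewise, $\kappa_{\text{min}}^{-1}(y)=\big(\min_{x\in\bar D}\kappa(y,x)\big)^{-1}=\max_{x\in\bar D}\kappa(y,x)^{-1}=\|\kappa^{-1}(y,\cdot)\|_{C(D)}$.

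For the bound on $\kappa_{\text{max}}$, I would simply chain these identities: $\|\kappa_{\text{max}}\|_{L^p(\Omega)}=\|\kappa\|_{L^p(\Omega,C(D))}\lesssim 1$ for every $0<p<\infty$ by Proposition \ref{prop:exp_bounded}, so no further work is needed here.

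For $\kappa_{\text{min}}^{-1}$, the point is that $\kappa^{-1}(y,x)=\exp(-\log\kappa(y,x))$, so one is really dealing with the lognormal field generated by $-\log\kappa$. Writing $\kappa_{\text{min}}^{-1}(y)=\exp\big(\max_{x\in\bar D}(-\log\kappa(y,x))\big)\le \exp\big(\|\log\kappa(y,\cdot)\|_{C(D)}\big)$, I would repeat verbatim the Fernique/Young argument from the proof of Proposition \ref{prop:exp_bounded}: raising to the $p$-th power and applying Young's inequality gives $p\|\log\kappa(y,\cdot)\|_{C(D)}\le \alpha\|\log\kappa(y,\cdot)\|_{C(D)}^2+\tfrac{p^2}{4\alpha}$, and then the Fernique integrability \eqref{eq:Fernique} yields
\[
\int_{\Omega}\kappa_{\text{min}}^{-1}(y)^p\,\dy\le e^{p^2/(4\alpha)}\int_{\Omega}\exp\big(\alpha\|\log\kappa(y,\cdot)\|_{C(D)}^2\big)\,\dy\lesssim 1 .
\]
Taking $p$-th roots gives the claimed bound for all $0<p<\infty$.

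The argument is essentially free of obstacles; the only point requiring a moment's care is the $\kappa_{\text{min}}^{-1}$ estimate, where the natural temptation to invoke Proposition \ref{prop:exp_bounded} directly fails, since that proposition concerns $\kappa$ rather than $\kappa^{-1}$. The clean fix is the observation above that $-\log\kappa$ is again a centered Gaussian field in $C(D)$ with the same uniform norm as $\log\kappa$, so that the single Fernique statement \eqref{eq:Fernique} controls $\kappa$ and $\kappa^{-1}$ simultaneously.
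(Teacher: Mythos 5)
Your proof is correct and follows essentially the same route as the paper: the paper disposes of this proposition by citing the proof of \cite[Theorem 2.2]{char12a}, which is precisely the bound $\kappa_{\text{max}}(y),\,\kappa_{\text{min}}^{-1}(y)\leq \exp\big(\|\log\kappa(y,\cdot)\|_{C(D)}\big)$ combined with the Young/Fernique computation that the paper itself writes out in the proof of Proposition \ref{prop:exp_bounded}. Your only addition is to make explicit the (correct) observation that the single Fernique bound \eqref{eq:Fernique} controls both $\kappa$ and $\kappa^{-1}$, so no gap remains.
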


Proposition \ref{A:1}, together with the Lax-Milgram theorem, guarantees that the weak formulation \eqref{eqn:weakform} is well-posed. Furthermore,
\begin{align}\label{eqn:uniformBound}
  \|u\|_{L^p(\Omega,H^1(D))}\lesssim {\normHNg{f}{D}}  \quad \text{ for all } p>0.
\end{align}

After substituting the numerical KL approximation $\kappa_M^{N,h}(y,x)$ of the diffusion coefficient $\kappa(y,x)$ into problem \eqref{eqn:spde},
we arrive at a truncated problem with finite-dimensional noise: For a.e. $y\in\Omega$
\begin{equation}\label{eqn:spde_KL}
\left\{\begin{aligned}
\mathcal{L}_{M}^{N,h} u_M^{N,h}(y,\cdot)& = f,\quad x\in D,\\
u_M^{N,h}(y,\cdot)&=0,\quad x\in \partial D,
\end{aligned}\right.
\end{equation}
where $\mathcal{L}_M^{N,h}$ is the elliptic differential operator with the diffusion coefficient $\kappa_M^{N,h}$.
The corresponding weak formulation is then to find $u_M^{N,h}(y,\cdot)\in V$ such that
\begin{align}\label{eqn:weakform_M}
\int_{D}\kappa_M^{N,h}(y,x)\nabla u_M^{N,h}(y,x)\cdot\nabla v(x)\mathrm{d}x=\int_{D}f(x)v(x)\mathrm{d}x\quad \forall v\in V,
\end{align}
for any given $y\in \Omega$. Analogous to the continuous case,
let $\kappa_{\text{min}}^{M,N,h}(y):=\min\limits_{x\in \bar{D}}\kappa_{M}^{N,h}(y,x)$ and $\kappa_{\text{max}}^{M,N,h}(y):=\max\limits_{x\in \bar{D}}\kappa_M^{N,h}(y,x)$ for all $y\in \Omega$. We can then state the well-posedness of problem \eqref{eqn:weakform_M}.
\begin{proposition}[Ellipticity and boundedness of $\kappa_M^{N,h}$]\label{A:2.1} The following bounds hold:
\[
\forall \; 0<p<\infty: \quad\|(\kappa_{\text{min}}^{M,N,h})^{-1}\|_{L^p(\Omega)}\lesssim 1 \quad \text{ and }\quad \|\kappa_{\text{max}}^{M,N,h}\|_{L^p(\Omega)}\lesssim 1.
\]
\end{proposition}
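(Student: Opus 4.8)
The plan is to follow the argument of Proposition \ref{prop:exp_bounded} almost verbatim, the essential observation being that the truncated logarithm $\log\kappa_M^{N,h}(y,x)=\sum_{n=1}^M\sqrt{\lambda_n^{N,h}}\,\phi_n^{N,h}(x)\psi_n(y)$ in \eqref{eqn:KL_N} is a centered Gaussian random variable taking values in $C(D)$. Indeed, the coefficients $\sqrt{\lambda_n^{N,h}}\,\phi_n^{N,h}$ are deterministic and, being finite element functions in $V_h$, belong to $C(D)$, while the $\{\psi_n\}_{n=1}^M$ are i.i.d.\ standard normal; hence the finite sum is a centered, $C(D)$-valued Gaussian, and Fernique's theorem yields some $\alpha>0$ with $\int_\Omega\exp\bigl(\alpha\normI{\log\kappa_M^{N,h}(y,\cdot)}{D}^2\bigr)\dy\lesssim 1$.

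First I would dispose of the boundedness of $\kappa_{\text{max}}^{M,N,h}$. Since $\kappa_M^{N,h}=\exp(\log\kappa_M^{N,h})>0$ pointwise, the maximum over $\bar D$ equals the sup-norm, so $\kappa_{\text{max}}^{M,N,h}(y)=\normI{\kappa_M^{N,h}(y,\cdot)}{D}$ for every $y\in\Omega$, and therefore $\|\kappa_{\text{max}}^{M,N,h}\|_{L^p(\Omega)}=\|\kappa_M^{N,h}\|_{L^p(\Omega,C(D))}\lesssim 1$ directly by Proposition \ref{prop:exp_bounded}. For the ellipticity bound I would use $(\kappa_{\text{min}}^{M,N,h}(y))^{-1}=\exp\bigl(-\min_{x\in\bar D}\log\kappa_M^{N,h}(y,x)\bigr)\le\exp\bigl(\normI{\log\kappa_M^{N,h}(y,\cdot)}{D}\bigr)$, since $-\min_x(\cdot)\le\normI{\,\cdot\,}{D}$. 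Raising to the power $p$, integrating, and applying Young's inequality $p\,t\le\alpha t^2+p^2/(4\alpha)$ with $t=\normI{\log\kappa_M^{N,h}(y,\cdot)}{D}$ reduces the claim to the Fernique bound above, exactly as in Proposition \ref{prop:exp_bounded}.

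The only point demanding genuine care, and thus the main obstacle, is the uniformity of the implied constants in $M$, $N$ and $h$: the constant furnished by Fernique's theorem depends on the radius $R=\inf\{r:\mathbb{P}(\normI{\log\kappa_M^{N,h}(y,\cdot)}{D}\le r)\ge 3/4\}$, which a priori varies with the approximation parameters. I would control it through Chebyshev's inequality, giving $R\lesssim\|\log\kappa_M^{N,h}\|_{L^2(\Omega,C(D))}$, and then bound the latter uniformly by the triangle inequality $\|\log\kappa_M^{N,h}\|_{L^2(\Omega,C(D))}\le\|\log\kappa\|_{L^2(\Omega,C(D))}+\|\log\kappa-\log\kappa_M^{N,h}\|_{L^2(\Omega,C(D))}\lesssim 1$, where the first term is finite by Assumption \ref{A:22} and the second is controlled by Theorem \ref{thm:FinalL00}. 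This keeps $\alpha$ bounded below and hence makes all estimates hold with constants independent of $M$, $N$ and $h$, completing the argument.
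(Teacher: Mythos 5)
Your proof is correct and follows essentially the same route as the paper, which simply delegates to the proof of \cite[Theorem 2.2]{char12a} and to Proposition \ref{prop:exp_bounded} --- both of which rest on exactly the Fernique-plus-Young argument you spell out. Your additional point about controlling the Fernique radius $R$ uniformly in $M$, $N$, $h$ via Chebyshev and Theorem \ref{thm:FinalL00} is a genuine detail the paper leaves implicit, and it is handled correctly.
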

\begin{proof}
This follows from the proof of \cite[Theorem 2.2]{char12a}.
\end{proof}

Due to Proposition \ref{A:2.1} and the Lax-Milgram theorem, we obtain the well-posedness of problem \eqref{eqn:weakform_M}. Furthermore, \eqref{eqn:weakform_M} and Proposition \ref{A:2.1}, together with Poincar\`{e}'s inequality, give the following {\em a priori} estimate
\begin{align}\label{eq:apriori}
\|u_M^{N,h}\|_{L^p(\Omega,H^1(D))}\lesssim \|{f}\|_{H^{-1}(D)} \quad \text{ for all } p>0.
\end{align}

The next result quantifies the {effect} of the perturbation of the coefficient $\kappa(y,x)$ on the solution $u(y,x)$.
\begin{theorem}\label{lemma:perturbation}
Let Assumption \ref{A:22} hold. Let $N\in \mathbb{N}_{+}$ be large, let $M\leq \lfloor  N^{\frac{1}{2s/d+1}}\rfloor$ and $h\ll N^{-\frac{1}{s}}$. Assume the spectral gap condition \eqref{eq:spec_gap} to be valid. Let $u$ and $u_M^{N,h}$ be solutions to \eqref{eqn:spde} and \eqref{eqn:spde_KL}, respectively.
Then for all $p<2$, there holds
\begin{align*}
\|{u(y,\cdot)-u_M^{N,h}(y,\cdot)}\|_{L^p(\Omega, H^1(D))}\lesssim\Big(M^{-\frac{s}{d}+\frac{1}{2}}+M^{\frac{3s}{d}+2}N^{-1/2}h^{s}+ M^{\frac{s}{d}+\frac{3}{2}}h^{s}+M^{\frac{s}{d}+1}N^{-1/2}\Big)\norm{f}_{H^{-1}(D)}.
\end{align*}
\end{theorem}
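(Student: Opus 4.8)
The plan is to derive the solution error from the coefficient error (Theorem \ref{thm:FinalExp}) via a standard perturbation argument for the bilinear forms, combined with H\"older's inequality in the stochastic variable to descend from the natural $L^p$ exponents to $p<2$. First I would subtract the two weak formulations \eqref{eqn:weakform} and \eqref{eqn:weakform_M}. For a.e.\ $y\in\Omega$ and every $v\in V=H^1_0(D)$, this gives
\[
\int_D \kappa_M^{N,h}\nabla(u-u_M^{N,h})\cdot\nabla v\,\mathrm{d}x = \int_D (\kappa_M^{N,h}-\kappa)\,\nabla u\cdot\nabla v\,\mathrm{d}x.
\]
Choosing $v=u-u_M^{N,h}$ and using the coercivity constant $\kappa_{\text{min}}^{M,N,h}(y)$ for the left-hand side, together with $\|\kappa-\kappa_M^{N,h}\|_{C(D)}$ to bound the right-hand side, yields the pointwise-in-$y$ estimate
\[
\|u(y,\cdot)-u_M^{N,h}(y,\cdot)\|_{H^1(D)} \lesssim \frac{\|\kappa(y,\cdot)-\kappa_M^{N,h}(y,\cdot)\|_{C(D)}}{\kappa_{\text{min}}^{M,N,h}(y)}\,\|u(y,\cdot)\|_{H^1(D)}.
\]

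Next I would take the $L^p(\Omega)$-norm of this inequality. The right-hand side is a product of three $y$-dependent factors, so I would apply a triple H\"older inequality, splitting $\tfrac{1}{p}=\tfrac{1}{2}+\tfrac{1}{q_1}+\tfrac{1}{q_2}$ for suitable exponents $q_1,q_2$ made finite precisely because $p<2$. The $C(D)$-error factor is controlled in $L^2(\Omega,C(D))$ by the uniform estimate \eqref{eq:finalLinftyexp} of Theorem \ref{thm:FinalExp}; the factor $(\kappa_{\text{min}}^{M,N,h})^{-1}$ is bounded in every $L^{q_1}(\Omega)$ by Proposition \ref{A:2.1}; and the solution factor $\|u\|_{H^1(D)}$ is bounded in every $L^{q_2}(\Omega)$ by the a priori estimate \eqref{eqn:uniformBound}, which contributes the $\|f\|_{H^{-1}(D)}$ dependence. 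Multiplying the three bounds reproduces exactly the claimed rate.

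The main obstacle, and the reason the restriction $p<2$ appears, is that the $C(D)$-error of the coefficient is only available in $L^2(\Omega)$ and no better: the mean value bound $|e^x-e^y|\le|x-y|(e^x+e^y)$ used in Theorem \ref{thm:FinalExp} forces one factor of $L^2$ integrability for the logarithmic error, so the remaining two factors must be absorbed into $L^{q_1},L^{q_2}$ with finite exponents, which is only possible when $\tfrac{1}{p}-\tfrac12>0$. That the ellipticity reciprocal and the solution are in \emph{every} $L^q(\Omega)$ (Propositions \ref{A:1}, \ref{A:2.1} and the bound \eqref{eqn:uniformBound}, all consequences of Fernique's theorem) is what makes this splitting harmless and keeps the rate identical to the coefficient rate. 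The only care needed is to bound the right-hand side using the \emph{perturbed} coefficient's ellipticity $\kappa_{\text{min}}^{M,N,h}$ rather than $\kappa_{\text{min}}$, since coercivity is tested against $\kappa_M^{N,h}$; Proposition \ref{A:2.1} is tailored to supply exactly this.
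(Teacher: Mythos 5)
Your proposal is correct and follows essentially the same route as the paper: subtract the two weak formulations, test with $u-u_M^{N,h}$, invoke coercivity pointwise in $y$, and then apply a generalized H\"older inequality in $\Omega$ together with Theorem \ref{thm:FinalExp}, the ellipticity bounds and the a priori estimate. The only (harmless) difference is a symmetric swap --- the paper keeps $\kappa$ on the coercive side and bounds $\|\nabla u_M^{N,h}\|_{L^2(D)}$ via \eqref{eq:apriori} with Proposition \ref{A:1}, whereas you use $\kappa_M^{N,h}$ for coercivity and bound $\|\nabla u\|_{L^2(D)}$ via \eqref{eqn:uniformBound} with Proposition \ref{A:2.1}; both variants yield the identical rate.
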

\begin{proof}
From the weak formulations for $u(y,x)$ and $u_M^{N,h}(y,x)$, cf. \eqref{eqn:weakform} and \eqref{eqn:weakform_M}, we obtain for any $y\in\Omega$
\begin{align}\label{eqn:1}
\int_{D}\kappa(y,x)&\nabla (u(y,x)-u_M^{N,h}(y,x))\cdot\nabla v(x)\mathrm{d}x\\
&=\int_{D}(\kappa_M^{N,h}(y,x)-\kappa(y,x))\nabla u_M^{N,h}(y,x)\cdot\nabla v(x)\mathrm{d}x\quad \forall v\in V.\nonumber
\end{align}
By setting $v=u-u_M^{N,h}\in V$ in the weak formulation \eqref{eqn:1} and using Proposition \ref{A:1} and the generalized H\"{o}lder inequality, we have
\begin{align*}
&\kappa_{\text{min}}(y)\normHsemi{u(y,\cdot)-u_M^{N,h}(y,\cdot)}{D}^2\leq \int_{D}\kappa(y,x)|\nabla (u(y,x)-u_M^{N,h}(y,x))|^2\mathrm{d}x\\
&=\int_{D}(\kappa_M^{N,h}(y,x)-\kappa(y,x))\nabla u_M^{N,h}(y,x)\cdot\nabla (u(y,x)-u_M^{N,h}(y,x))\mathrm{d}x\\
&\leq\norm{\kappa_M^{N,h}(y,\cdot)-\kappa(y,\cdot)}_{C(D)}
\normHsemi{u(y,\cdot)-u_M^{N,h}(y,\cdot)}{D}
\norm{\nabla u_M^{N,h}(y,\cdot)}_{L^{2}(D)}.
\end{align*}
Consequently, we arrive at
\begin{align*}
\normHsemi{u(y,\cdot)-u_M^{N,h}(y,\cdot)}{D}\leq \kappa_{\text{min}}^{-1}(y)
\norm{\kappa_M^{N,h}(y,\cdot)-\kappa(y,\cdot)}_{C(D)}
\norm{\nabla u_M^{N,h}(y,\cdot)}_{L^{2}(D)}.
\end{align*}
Finally, taking the $L^p(\Omega)$-norm on both sides and employing the generalized H\"{o}lder's inequality, combined with Theorem \ref{thm:FinalExp}, Proposition \ref{A:1} and the {\em a priori }estimate \eqref{eq:apriori}, shows the desired result.
\end{proof}

\begin{remark}
Note that this work is mainly concerned with the numerical approximation of the lognormal random coefficient. Therefore,
we refrain from discussing the important issue of the numerical approximation of the associated elliptic problems, i.e., Problems
\eqref{eqn:spde} and \eqref{eqn:spde_KL}. We refer to \cite{kuo2017multilevel} for related results in this direction.
\end{remark}

\section{Numerical simulation}\label{sec:num}
In this section, we provide numerical tests to verify the theoretical results presented in Section \ref{sec:main}.
Recall that the three parameters $M$, $N$ and $h$ denote the number of terms in the KL approximation, the number of
sampling points and the mesh size. These parameters determine directly the computational cost involved.

We take $\Omega:=\mathbb{R}^{d'}$ in the following simulation. In order to obtain the $M$-term KL truncation estimate
to $\log\kappa(y,x)$ in the form of \eqref{eqn:KL_N}, we employ the fast CBC construction of randomly
shifted lattice rules in the unanchored space \cite{Nichols2014FastCC} to estimate the kernel $\mathcal{R}_N(x,x')
\in L^2(D\times D)$ defined in \eqref{eq:QMC}. To this end, we employ the unanchored space $\mathcal{F}(\mathbb{R}^{d'})$ as defined by \eqref{eq:mathcalF}, where the weight function and the weight parameters are
\begin{equation*}
\left\{
\begin{aligned}
\psi(y_j)&:=1 &&\text{for all } j=1,\cdots,d'\\
\gamma_{\bs{\alpha}}&:=(|\bs{\alpha}|!)^2\Pi_{j\in\bs{\alpha}}\frac{0.01}{j^3} &&\text{ for all } \bs{\alpha}\subset\{1,\cdots,d'\}.
\end{aligned}
\right.
\end{equation*}

We apply the CBC method \cite[Algorithm 6]{Nichols2014FastCC} to derive the generating vector $\bs{z}\in [0,1)^{d'}$
with the number of sampling points being $N=1009$. To this end, let the shift $\bs{\Delta}\in [0,1]^{d'}$ be an i.i.d uniformly distributed vector. Then we obtain the randomly shifted (rank-1) lattice rule by formula \eqref{eq:lattice}.  
Now, $R_N(x,x')$ in \eqref{eq:QMC} can be approximated by taking $y_i:=\bs{\phi}^{-1}(\xi_i)$ for $i=1,2,\cdots,N$.

The bivariate functions $\log\kappa(\cdot,x)$ employed in the following examples belong to $\mathcal{F}(\mathbb{R}^{d'})$ for all $x\in D$. Thus, using
the CBC Algorithm to calculate $\mathcal{R}_N(x,x')\in L^2(D\times D)$ as defined in
\eqref{eq:QMC} yields a shifted-averaged worse-case error of $\mathcal{O}(N^{-1+\delta})$ for any $\delta>0$ with the construction
cost of $\mathcal{O}(d'N\log(N))$.
\subsection{Example 1: $d'=1$ and $N=1009$}
Let
\begin{align}\label{eq:eg1}
\log\kappa(y,x):=e^{-|x-y|} \quad \text{ with } x\in [0,1] \text{ and } y\in \mathbb{R},
\end{align}
see Fig. \ref{fig:eg1} for an illustration.
One can verify that $\log\kappa(y,x)\in L^2(\Omega,H^{3/2-\delta}(D))$ for any $\delta>0$ with the physical domain
$D:=[0,1]$ and the stochastic domain $\Omega:=\mathbb{R}$. Thus, we
have $s:=3/2-\delta$ in this case. According to the definition of the finite element space $V_h$, we will use conforming quadratic finite element. Now as in Remark \ref{rm:complex}, let the tolerance be chosen as $\epsilon:=0.1$. Note that we always fix the sampling points $N=1009$. Then we can take the number of truncation terms $M\approx 5$ and the mesh size $h\approx 1/101$.

\begin{figure}[H]
  \centering
  \includegraphics[width=0.5\textwidth]{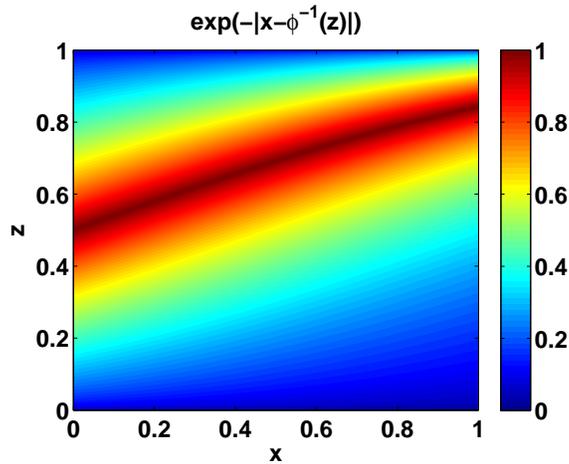}
  \caption{An illustration of the bivariate function in \eqref{eq:eg1}.}
  \label{fig:eg1}
\end{figure}

Since the dimension of the stochastic domain equals to one, we can choose the generating vector $\bs{z}:=1$. We then can derive the sampling points $\{y_i\}_{i=1}^{N}:=\{\bs{\phi}^{-1}(\xi_i)\}_{i=1}^{N}$ from formula \eqref{eq:lattice}. The shifted-averaged worse-case error is 8.0171e-6.
We present in Table \ref{table:re3} the root mean square error between $\log\kappa$ and $\log\kappa_{M}^{N,h}$ for different numbers of
truncation terms $M=2,4,8$ and different mesh sizes $h:=1/16,1/64, 1/128, 1/256$.

\begin{table}[H]
\begin{center}
  \caption{The root mean square error between $\log\kappa$ and $\log\kappa_{M}^{N,h}$ for different numbers of
truncation terms $M$ and different mesh sizes $h$. Here, the number of sampling points is $N=1009$ and the dimension $d'=1$ with the optimal parameters being $(M,h):=(5,1/101)$ and the corresponding root mean square error being 8.0493e-3.}
  \label{table:re3}

  \begin{tabular}{ | c | c | c | c |}
    \hline
    $h\backslash M$ & 2 & 4 & 8 \\ \hline
    1/16& 4.1217e-2 & 1.1933e-2 &3.6194e-3\\ \hline
    1/64 & 4.1216e-2 & 1.1931e-2 &3.6060e-3\\\hline
    1/128 & 4.1216e-2 & 1.1931e-2 &3.6059e-3\\\hline
    1/256 & 4.1216e-2 & 1.1931e-2 &3.6059e-3\\\hline
  \end{tabular}
\end{center}
\end{table}
Now, let us compare these computed results with the values that were
predicted from our theory. To this end, we plug the fixed number of sampling points $N=1009$ into Remark \ref{rm:complex}
and derive that we can take the accuracy $\epsilon:=0.1$, the number of truncation terms $M:=5$ and the mesh size $h:=1/101$. Indeed, for $(M,h):=(5,1/101)$, we also obtain the optimal error in Table \ref{table:re3}. This shows that our estimates are quite sharp and involve just small constants.

\subsection{Example 2: $d'=10,100$ and $N=1009$}
Let the bivariate function $\kappa(y,x)$ be given by
\begin{align}\label{eq:eg2}
\log\kappa(y,x):=e^{-|x-1/2|\times\|y\|_{\ell_1}} \quad \text{ with } x\in [0,1] \text{ and } y\in \mathbb{R}^{d'}.
\end{align}
Then one can verify that $\log\kappa(y,x)\in L^2(D,H^{t}(\Omega))$ for any $t>0$, with the physical domain $D:=[0,1]$
and the stochastic domain $\Omega:=\mathbb{R}^{d'}$. According to the definition of the finite element space $V_h$, we will use spectral element up to degree 10. The basis functions are Lagrange interpolation
polynomials through the local Gauss-Lobatto integration points defined per element. Due to Theorem \ref{thm:truncationError},
the eigenvalues decay very fast since $s=\infty$ in this case.

We present the root mean square errors between $\log\kappa$ and $\log\kappa_{M}^{N,h}$ in Tables \ref{table:re1} and \ref{table:re2} for different mesh sizes $h:=1/16, 1/64,1/128 \text{ and }1/256$ and different numbers of truncation terms $M:=2,4 \text{ and }8$ with $d'=10$ and $d'=100$, respectively.

\begin{table}[h]
\begin{center}
  \caption{The root mean square error between $\log\kappa$ and $\log\kappa_{M}^{N,h}$ for different numbers of
truncation terms $M$ and different mesh sizes $h$. Here, the number of sampling points is $N=1009$, dimension $d'=10$ and the shifted-averaged worse-case error is 3.0987e-3.}
  \label{table:re1}
  \begin{tabular}{ | c | c | c | c |}
    \hline
    $h\backslash M$ & 2 & 4 & 8 \\ \hline
    1/16& 6.0723e-3 & 1.8676e-4 &1.7078e-4\\ \hline
    1/64 & 7.2336e-3 & 1.0267e-4 &1.0837e-5\\\hline
    1/128 & 6.2803e-3 & 7.9009e-5 &2.7146e-6\\\hline
    1/256 & 6.1652e-3 & 6.6978e-5 &2.7102e-6\\\hline
  \end{tabular}
\end{center}
\end{table}

\begin{table}[H]
\begin{center}
  \caption{The root mean square error between $\log\kappa$ and $\log\kappa_{M}^{N,h}$ for different numbers of
truncation terms $M$ and different mesh sizes $h$. Here, the number of sampling points $N=1009$, dimension $d'=100$ and the shifted-averaged worse-case error is 3.1045e-3.}
  \label{table:re2}
  \begin{tabular}{ | c | c | c | c |}
    \hline
    $h\backslash M$ & 2 & 4 & 8 \\ \hline
    1/16& 2.8446e-3 & 2.8332e-3 &2.8335e-3\\ \hline
    1/64 & 3.7890e-4 & 2.9770e-4 &2.9743e-4\\\hline
    1/128 & 2.8832e-4 & 7.8182e-5 &7.8268e-5\\\hline
    1/256 & 2.2743e-4 & 1.9821e-5 &1.9772e-5\\\hline
  \end{tabular}
\end{center}
\end{table}
Furthermore, for our fixed number of sampling points $N:=1009$ and for the accuracy $\epsilon:=0.1$, we can compare our computed results with the predicted ones due to Remark \ref{rm:complex}. We see that our estimates are again qualitatively quite sharp and involve just small constants.


\section{Concluding remarks}\label{sec:conclusion}
In this work, we have analyzed the numerical approximation error in the Karhunen-Lo\`{e}ve expansion to log normal
random coefficients. We derived the numerical error in terms of the number $M$ of terms in the
Karhunen-Lo\`{e}ve expansion, the number $N$ of QMC sampling points to estimate the covariance function and
the mesh size $h$ for the conforming Galerkin approximation to the eigenvalue problem. Our results show the basic
relation
$$M\leq N^{\frac{1}{2s/d+1}} \text{ and } h\ll N^{-1/s}$$
among those three parameters, where $d$ is the dimension of the physical domain and $s$ denotes the regularity of the bivariate function in the physical domain. These results are also useful for the study of stochastic elliptic problems. We presented numerical results for one and multiple stochastic dimensions to support our theory.

The QMC method can be replaced by some properly adapted sparse grid method, if there is higher mixed regularity in $\log\kappa$ present with respect to the stochastic variables. Analogously, if the physical problem possesses higher regularity, then a more suitable FEM of higher order can be utilized. Then, of course, the sampling estimate, the Galerkin estimate and the resulting error estimates have to be modified accordingly. This would lead to a different balancing of the terms that in Remark \ref{rm:complex}.

\section*{Acknowledgements}
The authors were supported by the Hausdorff Center for Mathematics in Bonn and the Sonderforschungsbereich 1060 {\em The Mathematics of Emergent Effects} funded by the Deutsche Forschungsgemeinschaft. Guanglian Li acknowledges the support from the Royal Society via the Newton International fellowship. Part of this work was done during her visit to IPAM in the Long program: Computational Issues in Oil Field Applications.
We thank Prof. Dr. Markus Bachmayr for fruitful discussions.
\bibliographystyle{abbrv}
\bibliography{reference}

\begin{thebibliography}{10}

\bibitem{Adam78}
R.~Adams and J.~Fournier.
\newblock {\em Sobolev {S}paces}.
\newblock Elsevier/Academic Press, Amsterdam, 2003.

\bibitem{Agmon65}
S.~Agmon.
\newblock {\em Lectures on {E}lliptic {B}oundary {V}alue {P}roblems}.
\newblock Princeton, N.J.-Toronto-London, 1965.

\bibitem{babuska2004galerkin}
I.~Babu{\v{s}}ka, R.~Tempone, and G.~Zouraris.
\newblock Galerkin finite element approximations of stochastic elliptic partial
  differential equations.
\newblock {\em SIAM J. Numer. Anal.}, 42(2):800--825, 2004.

\bibitem{babuska&osborn91}
I.~Babu\v{s}ka and J.~Osborn.
\newblock Eigenvalue problems.
\newblock {\em Handbook of Numerical Analysis}, 2:641--787, 1991.

\bibitem{Bachmayr.Cohen.Dinh.Schwab:2017}
M.~Bachmayr, A.~Cohen, D.~D\~{u}ng, and C.~Schwab.
\newblock Fully discrete approximation of parametric and stochastic elliptic
  {PDE}s.
\newblock {\em SIAM J. Numer. Anal.}, 55(5):2151--2186, 2017.

\bibitem{Bachmayr.Cohen.Migliorati}
M.~Bachmayr, A.~Cohen, and G.~Migliorati.
\newblock Representations of {G}aussian random fields and approximation of
  elliptic {PDE}s with lognormal coefficients.
\newblock {\em J. Fourier Anal. Appl.}, 2017.
\newblock in press.

\bibitem{bonami2016uniform}
A.~Bonami and A.~Karoui.
\newblock Uniform approximation and explicit estimates for the prolate
  spheroidal wave functions.
\newblock {\em Constructive Approximation}, 43(1):15--45, 2016.

\bibitem{MR2373954}
S.~Brenner and L.~Scott.
\newblock {\em The {M}athematical {T}heory of {F}inite {E}lement {M}ethods},
  volume~15 of {\em Texts in Applied Mathematics}.
\newblock Springer, New York, third edition, 2008.

\bibitem{B-Brown.Kuo.Sloan:2017}
B.~Brown, M.~Griebel, F.~Kuo, and I.~Sloan.
\newblock On the expected uniform error of geometric {B}rownian motion
  approximated by the {L\'evy-Ciesielski} construction.
\newblock INS Preprint No. 1706, University of Bonn, 2017.

\bibitem{Bungartz.Griebel:2004}
H.-J. Bungartz and M.~Griebel.
\newblock Sparse grids.
\newblock {\em Acta Numer.}, 13:1--123, 2004.

\bibitem{char12a}
J.~Charrier.
\newblock Strong and weak error estimates for elliptic partial diffrential
  equations with random coefficients.
\newblock {\em SIAM J. Numer. Anal.}, 50(1):216--246, 2012.

\bibitem{cohen2010convergence}
A.~Cohen, R.~DeVore, and C.~Schwab.
\newblock Convergence rates of best {$N$}-term {G}alerkin approximations for a
  class of elliptic s{PDE}s.
\newblock {\em Found. Comput. Math.}, 10(6):615--646, 2010.

\bibitem{MR3038697}
J.~Dick, F.~Kuo, and I.~Sloan.
\newblock High-dimensional integration: the quasi-{M}onte {C}arlo way.
\newblock {\em Acta Numer.}, 22:133--288, 2013.

\bibitem{DuTeUl2015}
D.~{D}$\tilde{\mathrm{u}}$ng, V.~{T}emlyakov, and T.~{U}llrich.
\newblock {\em {H}yperbolic {C}ross {A}pproximation}.
\newblock Advanced Courses in Mathematics. CRM Barcelona.
  Birkh\"{a}user/Springer, 2017.

\bibitem{ghanem2003stochastic}
R.~Ghanem and P.~Spanos.
\newblock {\em Stochastic {F}inite {E}lements: a {S}pectral {A}pproach}.
\newblock Courier Corporation, 2003.

\bibitem{Griebel.Hamaekers.Chinnamsetty:2016}
M.~Griebel, J.~Hamaekers, and R.~Chinnamsetty.
\newblock An adaptive multiscale approach for electronic structure methods.
\newblock {\em Multiscale Model. Simul}, 16(2):752--776, 2018.

\bibitem{Griebel.Harbrecht:2017}
M.~Griebel and H.~Harbrecht.
\newblock Singular value decomposition versus sparse grids: {R}efined
  complexity estimates.
\newblock {\em IMA Journal of Numerical Analysis}, dry039, 2018.
\newblock Also available as INS Preprint No. 1708.

\bibitem{griebel2017decay}
M.~Griebel and G.~Li.
\newblock On the decay rate of the singular values of bivariate functions.
\newblock {\em SIAM J. Numer. Anal.}, 56(2):974--993, 2018.

\bibitem{kuo2017multilevel}
F.~Kuo, R.~Scheichl, C.~Schwab, I.~Sloan, and E.~Ullmann.
\newblock Multilevel {quasi-Monte Carlo} methods for lognormal diffusion
  problems.
\newblock {\em Math. Comp.}, 86(308):2827--2860, 2017.

\bibitem{Nichols2014FastCC}
J.~Nichols and F.~Kuo.
\newblock Fast {CBC} construction of randomly shifted lattice rules achieving
  $\mathcal{O}(n^{-1+\delta})$ convergence for unbounded integrands over
  $\mathbb{R}^{s}$ in weighted spaces with {POD} weights.
\newblock {\em J. Complexity}, 30:444--468, 2014.

\bibitem{Pietsch:1986:ES:21700}
A.~Pietsch.
\newblock {\em Eigenvalues and {$s$}-numbers}.
\newblock Cambridge University Press, Cambridge, 1987.

\bibitem{Schwab:2006:KAR:1167051.1167057}
C.~Schwab and R.~Todor.
\newblock Karhunen-{L}o\`eve approximation of random fields by generalized fast
  multipole methods.
\newblock {\em J. Comput. Phys.}, 217(1):100--122, 2006.

\bibitem{sun2017new}
W.~Sun and L.~Durlofsky.
\newblock A new data-space inversion procedure for efficient uncertainty
  quantification in subsurface flow problems.
\newblock {\em Math. Geosci.}, 49(6):679--715, 2017.

\bibitem{Todor2006}
R.~Todor.
\newblock Robust eigenvalue computation for smoothing operators.
\newblock {\em SIAM J. Numer. Anal.}, 44(2):865--878, 2006.

\bibitem{yosida78}
K.~Yosida.
\newblock {\em Functional {A}nalysis}.
\newblock Springer-Verlag, Berlin-New York, 1980.

\end{thebibliography}

\end{document}